	\setlist[enumerate,1]{label=(\roman*), font=\normalfont} % ensures that enumerate in theorems is not in italics
\theoremstyle{plain}
\newtheorem{theorem}{Theorem}[section]
\newtheorem{lemma}[theorem]{Lemma}
\newtheorem{proposition}[theorem]{Proposition}
\newtheorem{corollary}[theorem]{Corollary}
\numberwithin{equation}{section}
\theoremstyle{definition}
\newtheorem{definition}[theorem]{Definition}
\newtheorem{remark}[theorem]{Remark}
\newtheorem{example}[theorem]{Example}
\newcounter{CountQuestions}
\newtheorem{question}[CountQuestions]{Question}
\newcommand{\goto}{\rightarrow}
\newcommand{\uv}[1]{``{#1}"} %quotation marks
\DeclareMathOperator{\ndeg}{ndeg}
\DeclareMathOperator{\spn}{span}
\newcommand{\Z}{\mathbb{Z}}
\newcommand{\mB}{\mathcal{B}}
\newcommand{\mBc}[1]{\mathcal{B}_{\mathrm{#1}}}
\newcommand{\boldzeta}{\boldsymbol{\zeta}}
\newcommand{\boldxi}{\boldsymbol{\xi}}
\newcommand{\ve}{\varepsilon}
\newcommand{\simv}{\stackrel{{\scriptsize{{v}}}}{\sim}}% Vishik equivalence
\newcommand{\simvw}{\stackrel{{v}_0}{\sim}}% weak Vishik equivalence
\newcommand{\simsim}{\stackrel{{\scriptsize{\mathrm{sim}}}}{\sim}}% similarity
\newcommand{\iw}[1]{\mathfrak{i}_{\mathrm{W}}(#1)} %Witt index
\newcommand{\iql}[1]{\mathfrak{i}_{\mathrm{d}}(#1)} %defect
\newcommand{\an}{\mathrm{an}}
\newcommand{\fullsplitpat}[1]{\mathrm{fSP}(#1)}
\newcommand{\fsp}[1]{\mathrm{fSP}(#1)}
\newcommand{\ort}{\:\bot\:}
\newcommand{\sqf}[1]{\langle #1 \rangle} % singular form
\newcommand{\pf}[1]{\langle\!\langle #1 \rangle\!\rangle} % quasi Pfister form
\newcommand{\nf}{\hat{\nu}} %norm form
\newcommand{\simf}{\hat{\sigma}} %similarity form
\begin{document}
\title[Vishik equivalence and similarity of $p$-forms]{Vishik equivalence and similarity of quasilinear $p$-forms and totally singular quadratic forms}
\author{Krist\'yna Zemkov\'a}
\address{Fakult\"at f\"ur Mathematik, Technische Universit\"at Dortmund, D-44221 Dortmund,
Germany}
\address{Department of Mathematics and Statistics, University of Victoria, Victoria BC V8W 2Y2, Canada}
\email{zemk.kr@gmail.com}%
\date{\today}
\subjclass[2020]{11E04, 11E81}
\keywords{Quasilinear $p$-forms, Quadratic forms, Finite characteristic, Isotropy, Equivalence relations}%
\date{\today}
\begin{abstract}
For quadratic forms over fields of characteristic different from two, there is a so-called Vishik criterion, giving a purely algebraic characterization of when two quadratic forms are motivically equivalent. In analogy to that, we define Vishik equivalence on quasiliner $p$-forms. We study the question whether Vishik equivalent $p$-forms must be similiar. We prove that this is not true for quasilinear $p$-forms in general, but we find some families of totally singular quadratic forms (i.e., of quasilinear $2$-forms) for which the question has a positive answer.
\end{abstract}
\thanks{This work was supported by DFG project HO 4784/2-1. The author further acknowledges support from the Pacific Institute for the Mathematical Sciences and a partial support from the National Science and Engineering Research Council of Canada. The research and findings may not reflect those of these institutions.\\ \indent The author reports there are no competing interests to declare.}
\maketitle

%===================================================================
%===============================================================================================
\section{Introduction}

Vishik defined an equivalence relation on quadratic forms over fields of characteristic other than $2$ (see \cite{Vish98-prep}, and also \cite{Kar00,Vish04-notes}): $\varphi$ and $\psi$ are equivalent if and only if 
\begin{equation}\label{Eq:VishikCrit}
\dim\varphi=\dim\psi \quad \text{and} \quad \iw{\varphi_E}=\iw{\psi_E}\ \text{ for any field extension } E/F;
\end{equation}
Vishik proved that this equivalence coincides with motivic equivalence. Hence, \eqref{Eq:VishikCrit} is nowadays known as \emph{Vishik's criterion} for motivic equivalence. Then the question was raised whether the equivalence defined by Vishik also coincides with similarity; in other words, the following question was asked:

\begin{question} \label{Not2Q}
Are quadratic forms satisfying Vishik's criterion \eqref{Eq:VishikCrit} necessarily similar?
\end{question}

Izhboldin proved in \cite{Izh98} and \cite{Izh00} that the answer to Question~\ref{Not2Q} is positive for odd-dimensional quadratic forms, but negative for even dimensional forms of dimension greater or equal to $8$ (except possibly for the dimension $12$). In \cite{Hof15-Mot}, Hoffmann proves that, under some conditions on the base field, Question~\ref{Not2Q} has positive answer for even-dimensional quadratic forms as well.

\bigskip

In the case of fields of characteristic $2$, we have to distinguish between different types of quadratic forms -- nonsingular and totally singular (the two extreme cases) and singular (the mixed type). In \cite{Hof04}, totally singular quadratic forms over fields of characteristic $2$ have been generalized to \emph{quasilinear $p$-forms} over fields of characteristic $p$. 

Let $F$ be a field of characteristic $p$, and let $\varphi$ and $\psi$ be quasilinear $p$-forms over $F$. Inspired by Vishik's criterion, we define \emph{Vishik equivalence} of $\varphi$ and $\psi$ as
\begin{equation*}%\label{Eq:VishikEquivalence}
\dim\varphi=\dim\psi \qquad \text{and} \qquad \iql{\varphi_E}=\iql{\psi_E}\ \text{ for any field extension } E/F,
\end{equation*}
where $\iql{\tau}$ is the \emph{defect} (sometimes also called the \emph{quasilinear index}) of the quasilinear $p$-form $\tau$. In analogy to Question~\ref{Not2Q}, we ask:

\setcounter{CountQuestions}{16}
\begin{question} \label{MQ}
Are Vishik equivalent quasilinear $p$-forms necessarily similar?
\end{question}

We show in examples~\ref{Ex:CounterExA_p-forms} and~\ref{Ex:CounterExB_p-forms} that Question~\ref{MQ} has a negative answer for $p$-forms if $p>3$. Therefore, in Section~\ref{Sec:Vishik_TSQF}, we will focus on totally singular quadratic forms, i.e., on the case when $p=2$. In this case, we can give a positive answer to Question~\ref{MQ} at least for some families of forms. For example, we prove the following:

\begin{theorem} [{cf. Theorem~\ref{Th:TSQF_Minimal}}]
Let $\varphi, \psi$ be totally singular quadratic forms over $F$ such that $\varphi_{\an}$ is minimal over $F$. If $\varphi$ and $\psi$ are Vishik equivalent, then they are similar.
\end{theorem}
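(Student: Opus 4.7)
The plan is to proceed in three stages. First, I reduce to the case where both $\varphi$ and $\psi$ are anisotropic. Setting $E = F$ in the definition of Vishik equivalence gives $\iql{\varphi} = \iql{\psi}$, hence $\dim \varphi_{\an} = \dim \psi_{\an}$. A totally singular quadratic form of defect $i$ decomposes as its anisotropic part orthogonally summed with $i$ copies of $\sqf{0}$, so similarity of $\varphi_{\an}$ and $\psi_{\an}$ immediately propagates to a similarity of $\varphi$ and $\psi$ (using that scaling preserves $\sqf{0}$). Moreover, for any extension $E/F$ and any totally singular quadratic form $\tau$ one has $\iql{\tau_E} = \iql{(\tau_{\an})_E} + \iql{\tau}$, so subtracting shows that $\varphi_{\an}$ and $\psi_{\an}$ remain Vishik equivalent. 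Hence I may assume $\varphi = \varphi_{\an}$ is minimal, $\psi = \psi_{\an}$, and $\dim\varphi = \dim\psi$.

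Second, I exploit generic isotropy. Over the function field $L := F(\varphi)$ the form $\varphi_L$ is isotropic, so $\iql{\varphi_L} \geq 1$, and Vishik equivalence forces $\iql{\psi_L} \geq 1$ as well. Applying the subform/dominance results developed for totally singular quadratic forms in Section~\ref{Sec:Vishik_TSQF} to the pair (anisotropic $\varphi$ and anisotropic $\psi$ of equal dimension, with $\psi_L$ isotropic) one extracts a scalar $a \in F^\times$ together with a subform $\tau \subseteq \psi$ that is dominated by $a\varphi$ — equivalently, a subform of $\psi$ that is similar to some subform of $\varphi$.

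Third, I invoke minimality. Because $\varphi$ is minimal, no proper subform of $\varphi$ can itself witness the generic isotropy of $\varphi$ at its own function field, so the candidate subform extracted in the previous step is forced to coincide with the full $\varphi$ up to scaling. Combined with the dimension equality $\dim\psi = \dim\varphi$, this upgrades the dominance to an isometry $\psi \simeq a\varphi$ for some $a \in F^\times$, which is exactly similarity. Running the argument symmetrically at $L' := F(\psi)$ can be used to double-check consistency, since by Vishik equivalence the splitting behaviour is mirrored.

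The main obstacle, and the reason Question~\ref{MQ} fails in general, lies in the middle step: for arbitrary quasilinear $p$-forms, the isotropy of $\psi$ over $F(\varphi)$ does not give a sharp subform relation but only a weaker dominance modulo a quasi-Pfister hull, and the pathologies detected in Examples~\ref{Ex:CounterExA_p-forms} and~\ref{Ex:CounterExB_p-forms} hide precisely in this gap. The hypothesis that $\varphi_{\an}$ is minimal is what shrinks this quasi-Pfister hull down to $\varphi$ itself. The technical core of the proof will therefore be to verify that the full strength of Vishik equivalence — control of the defect over \emph{every} extension $E/F$, not merely over $F(\varphi)$ — is strong enough to trigger this minimality reduction and promote dominance to similarity, rather than producing only an incomparable pair of forms with the same splitting pattern.
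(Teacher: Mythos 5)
There is a genuine gap in your steps two and three, and the approach as outlined cannot be completed.

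Your second step works over the single function field $L = F(\varphi)$, but for anisotropic totally singular forms the function field is stably birational to a single purely inseparable quadratic extension $F(\sqrt{a})$, so $\iql{\psi_L}$ carries only a sliver of the Vishik hypothesis. The proof in the paper instead exploits the \emph{whole family} of extensions $F(\sqrt{a})$ for every $a\in D_F(\varphi)\setminus F^2$ (and, indeed, needs only weak Vishik equivalence, so the ``control over every extension'' you flag at the end is not the mechanism at all). Moreover, the tool actually available here is Proposition~\ref{Prop:tsqf_MainTool}: if $\psi$ becomes isotropic over $F(\sqrt{a})$ with defect $i$, one gets $\tau\otimes\sqf{1,a}\subseteq\psi$ with $\dim\tau=i$. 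When the defect is $1$ this yields only $c\sqf{1,a}\subseteq\psi$ for some unknown $c\in F^*$; it does not produce a large subform of $\psi$ similar to a subform of $\varphi$, and, critically, the scalars $c$ obtained for different $a$'s have no a priori relation to each other.

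Your third step is where the gap widens. ``No proper subform of $\varphi$ can itself witness the generic isotropy of $\varphi$'' is not a correct reading of minimality: minimality means $\ndeg_F\varphi = 2^{\dim\varphi-1}$, and by Lemma~\ref{Lem:PropertiesOfMinimalForms_pforms} every subform of a minimal form is again minimal (and becomes isotropic over its own function field), so that heuristic doesn't single out $\varphi$. Minimality enters the real proof in an entirely different way: through the uniqueness of expansions with respect to a $2$-basis (Lemma~\ref{Lemma:pforms_Minimal_Repr}). After normalizing $\varphi\simeq\sqf{1,a_1,\dots,a_n}$ and fixing a $2$-basis of $N_F(\psi)$ built from coefficients of $\psi$, one writes the scalars $c_k$ (from $c_k\sqf{1,a_k}\subseteq\psi$) and the products $c_k a_k$, $d_k(a_1+a_k)$, $e_k(a_2+a_k)$ in that basis; minimality of $\psi$ forces all the cross terms to vanish, which step by step kills the non-square parts of the $c_k$'s and aligns them to a single scalar. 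That combinatorial case analysis is the technical core of the theorem, and your sketch skips it entirely. As written, the argument does not promote the data $c\sqf{1,a}\subseteq\psi$ to $\psi\simeq a\varphi$.
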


\noindent See Theorem~\ref{Th:TSQF_SPNsummary} and Corollary~\ref{Cor:VishikTSQFtotal} for more families for which the answer to Question~\ref{MQ} is positive. We would also like to point out Theorem~\ref{Thm:SimilarityFactorsVishik_pforms} which shows that Vishik equivalence preserves similarity factors, even in the case of quasilinear $p$-forms.

\bigskip

This paper is based on the second and third chapter of the author's PhD thesis \cite{KZdis}.

%=====================================================================================================

\section{Preliminaries}\label{Sec:Prel}

All fields in this article are of characteristic $p>0$. Whenever we talk about (totally singular) quadratic forms, we assume $p=2$.

%--------------------------------------------

\subsection{Quasilinear $p$-forms} 

Most of the necessary background on quasilinear $p$-forms can be found in \cite{Hof04}; we include it here for the readers' convenience.

\begin{definition}
Let $F$ be a field and $V$ a finite-dimensional $F$-vector space. A \emph{quasilinear $p$-form} (or simply a \emph{$p$-form}) \emph{over $F$} is a map ${\varphi:V\goto F}$ with the following properties:
\begin{enumerate}[(1)]
	\item $\varphi(av)=a^p\varphi(v)$ for any $a\in F$ and $v\in V$, \label{Enum:Defpform1}
	\item $\varphi(v+w)=\varphi(v)+\varphi(w)$ for any $v,w\in V$. \label{Enum:Defpform2}
\end{enumerate}
The dimension of $\varphi$ is defined as $\dim\varphi=\dim V$. 
\end{definition}

Any $p$-form $\varphi$ on an $F$-vector space $V$ can be associated with the polynomial $\sum_{i=1}^na_iX_i^p\in F[X_1,\dots,X_n]$, where $a_i=\varphi(v_i)$ with $\{v_1,\dots,v_n\}$ a basis of $V$. In such case, we write $\varphi$ as $\sqf{a_1,\dots,a_n}$.

We have $c\sqf{a_1,\dots,a_n}=\sqf{ca_1,\dots,ca_n}$ for any $c\in F^*$. If $\sqf{a_1,\dots,a_n}$ and $\sqf{b_1\dots,b_m}$ are two $p$-forms over $F$, then we define
\begin{align*}
\sqf{a_1,\dots,a_n}\ort\sqf{b_1\dots,b_m}&=\sqf{a_1,\dots,a_n,b_1\dots,b_m},\\
\sqf{a_1,\dots,a_n}\otimes\sqf{b_1\dots,b_m}&=a_1\sqf{b_1\dots,b_m}\ort\dots\ort a_n\sqf{b_1\dots,b_m}.
\end{align*}
Moreover, for a positive integer $k$, we write $k\times\varphi$ for the $p$-form $\varphi\ort\dots\ort\varphi$ consisting of $k$ copies of $\varphi$.

\bigskip

Two $p$-forms $\varphi:V\goto F$ and $\psi:W\goto F$ are called \emph{isometric} (denoted $\varphi\simeq\psi$) if there exists a bijective homomorphism $f:V\goto W$ of vector spaces such that $\varphi(v)=\psi(f(v))$ for any $v\in V$. If $f$ is not bijective but injective, then $\varphi$ is called a \emph{subform} of $\psi$ (denoted $\varphi\subseteq\psi$); then there exists a $p$-form $\sigma$ over $F$ such that $\psi\simeq\varphi\ort\sigma$. If $\varphi\simeq c\psi$ for some $c\in F^*$, then $\varphi$ and $\psi$ are called \emph{similar} (denoted $\varphi\simsim\psi$).

A $p$-form  $\varphi:V\goto F$ is called \emph{isotropic} if $\varphi(v)=0$ for some $v\in V\setminus\{0\}$; otherwise, $\varphi$ is called \emph{anisotropic}. The $p$-form $\varphi$ can be written as ${\varphi\simeq\sigma\ort k\times\sqf{0}}$ with $\sigma$ an anisotropic $p$-form over $F$ and $k$ a non-negative integer. Then $\sigma$ is unique up to isometry, and it is called the \emph{anisotropic part of $\varphi$} (denoted $\varphi_{\an}$). The integer $k$ is called the \emph{defect} of $\varphi$ (denoted $\iql{\varphi}$). 

Let $\varphi$ be a $p$-form on an $F$-vector space $V$; we set 
\begin{align*}
&D_F(\varphi)=\{\varphi(v)~|~v\in V\} \quad &\text{and}& \quad &D_F^*(\varphi)=D_F(\varphi)\setminus\{0\},\\
 &G_F^*(\varphi)=\{x\in F^*~|~x\varphi\simeq\varphi\} \quad &\text{and}& \quad &G_F(\varphi)=G_F^*(\varphi)\cup\{0\}.
\end{align*}
Note that $D_F(\varphi)$ is an $F^p$-vector space; in particular, if $\varphi\simeq\sqf{a_1,\dots,a_n}$, then $D_F(\varphi)=\spn_{F^p}\{a_1,\dots,a_n\}$. 

\begin{lemma}[{\cite[Prop.~2.6]{Hof04}}] \label{Lemma:PickAnisp-subform} \label{Lem:p-subform} 
Let $\varphi$ be a $p$-form over $F$.
\begin{enumerate}
	\item Let $\{c_1,\dots,c_k\}$ be any $F^p$-basis of the vector space $D_F(\varphi)$. Then we have $\varphi_{\an}\simeq\sqf{c_1,\dots,c_k}$.
	\item If $a_1,\dots,a_m\in D_F(\varphi)$, then $\sqf{a_1,\dots,a_m}_{\an}\subseteq\varphi$.
\end{enumerate}
\end{lemma}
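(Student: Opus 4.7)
My plan is to first pin down the structure of $\varphi_{\an}$. Choose an $F$-basis $\{e_1,\dots,e_n\}$ of the underlying vector space so that $\varphi_{\an}\simeq\sqf{a_1,\dots,a_n}$, i.e.\ $\varphi_{\an}(e_i)=a_i$. Anisotropy forces each $a_i\neq 0$ (otherwise $e_i$ itself would be an isotropic vector) and $\{a_1,\dots,a_n\}$ to be $F^p$-linearly independent (otherwise a nontrivial relation $\sum_i\lambda_i^p a_i=0$ yields the isotropic vector $\sum_i\lambda_i e_i\neq 0$). Since these elements clearly span $D_F(\varphi_{\an})=D_F(\varphi)$, they form an $F^p$-basis of $D_F(\varphi)$; in particular, any such basis has cardinality $k=n$.

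Given any $F^p$-basis $\{c_1,\dots,c_k\}$ of $D_F(\varphi)$, I would then write $c_j=\sum_i\lambda_{ij}^p a_i$ with $\lambda_{ij}\in F$, and set $v_j=\sum_i\lambda_{ij}e_i$; the $p$-form axioms immediately give $\varphi_{\an}(v_j)=c_j$. The crucial step is showing that the matrix $\Lambda=(\lambda_{ij})$ is invertible over $F$: its entrywise $p$-th power $(\lambda_{ij}^p)$ is the change-of-basis matrix between the $F^p$-bases $\{a_1,\dots,a_n\}$ and $\{c_1,\dots,c_n\}$ of $D_F(\varphi)$, hence invertible over $F^p$; since $\det(\lambda_{ij}^p)=\det(\lambda_{ij})^p$ and Frobenius is injective on $F$, this forces $\det\Lambda\neq 0$. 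Therefore $\{v_1,\dots,v_n\}$ is an $F$-basis of the underlying vector space, and the resulting change of basis produces the required isometry $\varphi_{\an}\simeq\sqf{c_1,\dots,c_n}$.

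Part~(ii) then reduces cleanly to (i). Setting $\psi=\sqf{a_1,\dots,a_m}$, the hypothesis gives $D_F(\psi)=\spn_{F^p}\{a_1,\dots,a_m\}\subseteq D_F(\varphi)$. I would pick an $F^p$-basis $\{b_1,\dots,b_r\}$ of $D_F(\psi)$ and extend it to an $F^p$-basis $\{b_1,\dots,b_r,b_{r+1},\dots,b_s\}$ of $D_F(\varphi)$. Applying part~(i) to each of $\psi$ and $\varphi$ gives $\psi_{\an}\simeq\sqf{b_1,\dots,b_r}$ and $\varphi_{\an}\simeq\sqf{b_1,\dots,b_s}$, so $\psi_{\an}\subseteq\varphi_{\an}\subseteq\varphi$.

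The main obstacle is precisely the Frobenius step in (i): everything else is essentially bookkeeping that translates between $p$-form and vector-space language, but one must legitimately promote invertibility of the $p$-th-power matrix over $F^p$ to invertibility of the original matrix over $F$. The cleanest route is via the identity $\det(\lambda_{ij}^p)=\det(\lambda_{ij})^p$ together with injectivity of Frobenius on the field.
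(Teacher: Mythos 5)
Your proof is correct, and it fills in a result the paper itself does not prove but simply cites from Hoffmann \cite[Prop.~2.6]{Hof04}. The overall strategy --- establishing the bijection between $F^p$-bases of $D_F(\varphi)$ and diagonalizations of $\varphi_{\an}$, and then reducing (ii) to (i) by extending an $F^p$-basis of $\spn_{F^p}\{a_1,\dots,a_m\}$ to one of $D_F(\varphi)$ --- is the standard one for this fact. You have correctly identified and handled the one genuinely nontrivial step: promoting invertibility of the change-of-basis matrix $(\lambda_{ij}^p)$ over $F^p$ to invertibility of $(\lambda_{ij})$ over $F$. Your use of $\det(\lambda_{ij}^p)=\det(\lambda_{ij})^p$ (which holds in characteristic $p$ because Frobenius is additive and $\mathrm{sgn}(\sigma)^p=\mathrm{sgn}(\sigma)$) together with injectivity of Frobenius is exactly the right mechanism. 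The preliminary observations --- that anisotropy forces each $a_i$ nonzero and forces $\{a_1,\dots,a_n\}$ to be $F^p$-linearly independent, and that $D_F(\varphi_{\an})=D_F(\varphi)=\spn_{F^p}\{a_1,\dots,a_n\}$ --- are also all correct and stated at the right level of detail.
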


It follows that, for any $a,b\in F$ and $x\in F^*$, we have
\[\sqf{a,b}\simeq\sqf{a+b,b}\quad \text{and} \quad \sqf{a}\simeq\sqf{ax^p}.\]

\bigskip

For a $p$-form $\varphi$ on an $F$-vector space $V$ and a field extension $E/F$, we denote by $\varphi_E$ the $p$-form on the $E$-vector space $V_E=E\otimes V$ defined by $\varphi_E(e\otimes v)=e^p\varphi(v)$ for any $e\in E$ and $v\in V$. It was proved in \cite[Lemma~5.1]{Hof04} that if $\varphi\simeq\sqf{a_1,\dots,a_n}$, then $(\varphi_E)_\an\simeq\sqf{a_{i_1},\dots,a_{i_k}}$ for some $\{i_1,\dots,i_k\}\subseteq\{1,\dots,n\}$.

%------------------------------------
\subsection{Quasi-Pfister forms and quasi-Pfister neighbors} 

For $n>0$, an \emph{$n$-fold quasi-Pfister form} is a $p$-form $\pf{a_1,\dots,a_n}=\pf{a_1}\otimes\cdots\otimes\pf{a_n}$, where $\pf{a}=\sqf{1,a,\dots,a^{p-1}}$. Moreover, $\sqf{1}$ is called the \emph{$0$-fold quasi-Pfister form}. 

A $p$-form $\varphi$ over $F$ is called a \emph{quasi-Pfister neighbor} if $c\varphi\subseteq\pi$ for some $c\in F^*$ and a quasi-Pfister form $\pi$ over $F$ such that $\dim\varphi>\frac{1}{p}\dim\pi$. The value $\dim\pi-\dim\varphi$ is called the \emph{codimension} of $\varphi$.

\bigskip

In the following lemma, we summarize some of the most important properties of quasi-Pfister forms and quasi-Pfister neighbors.

\begin{lemma}[{\cite[Prop.~4.6]{Hof04}, \cite[Lemma~2.6]{Scu13}, \cite[Prop.~4.14]{Hof04}}] \label{Lemma:PFandPN}
Let $\pi\simeq\pf{a_1,\dots,a_n}$ be a quasi-Pfister form over $F$, $\varphi$ a quasi-Pfister neighbor of $\pi$, and $E/F$ a field extension. The followings hold:
\begin{enumerate}
	\item $G_E(\pi)=D_E(\pi)$.
	\item There either exist $k>0$ and a subset $\{i_1,\dots,i_k\}\subseteq\{1,\dots,n\}$ such that $(\pi_E)_\an\simeq\pf{a_{i_1},\dots,a_{i_k}}_E$, or $(\pi_E)_\an\simeq\sqf{1}_E$.
	\item $\varphi_E$ is isotropic if and only if $\pi_E$ is isotropic.
\end{enumerate}
\end{lemma}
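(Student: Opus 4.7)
The plan is to prove the three parts in order, with one central observation tying them together: for a quasi-Pfister form $\pi \simeq \pf{a_1,\dots,a_n}$, the entries of $\pi$ are exactly the monomials $a_1^{e_1}\cdots a_n^{e_n}$ with $0\le e_i\le p-1$, so the value set $D_E(\pi)$ is the $E^p$-subalgebra $E^p[a_1,\dots,a_n]$ of $E$. Since each $a_i^p \in E^p$, this subalgebra equals the field $E^p(a_1,\dots,a_n)$: it is a finite-dimensional integral domain over $E^p$, hence a field. This field structure of $D_E(\pi)$ is the special feature of quasi-Pfister forms that I would exploit throughout.

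For (i), I would first note $G_E(\pi) \subseteq D_E(\pi)$: if $x\pi \simeq \pi$, then $x = x\cdot 1 \in D_E(x\pi) = D_E(\pi)$. For the reverse inclusion, let $x \in D_E^*(\pi)$. Since $D_E(\pi)$ is a field, multiplication by $x$ permutes its nonzero elements, so $D_E(x\pi) = xD_E(\pi) = D_E(\pi)$. By Lemma~\ref{Lem:p-subform}(i), $(x\pi)_\an$ and $\pi_\an$ are then isometric to the same diagonal form on any chosen $E^p$-basis, and matching dimensions $\dim x\pi = \dim\pi$ forces $x\pi \simeq \pi$.

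For (ii), I plan to invoke a $p$-basis argument: choose a maximal $p$-independent subset $\{a_{i_1},\dots,a_{i_k}\}$ of $\{a_1,\dots,a_n\}$ modulo $E^p$, so that the monomials $\{a_{i_1}^{e_1}\cdots a_{i_k}^{e_k}\}_{0\le e_j\le p-1}$ form an $E^p$-basis of $E^p(a_{i_1},\dots,a_{i_k}) = E^p(a_1,\dots,a_n) = D_E(\pi)$. Applying Lemma~\ref{Lem:p-subform}(i) with this basis yields $(\pi_E)_\an \simeq \pf{a_{i_1},\dots,a_{i_k}}_E$ whenever $k>0$; the degenerate case $k=0$ occurs precisely when every $a_i$ is already a $p$-th power in $E$, in which case $D_E(\pi) = E^p$ and $(\pi_E)_\an \simeq \sqf{1}_E$.

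For (iii), one direction is immediate: if $\varphi_E$ is isotropic then so is $c\varphi_E$, and as a subform of $\pi_E$ this forces $\pi_E$ itself to be isotropic. The interesting direction uses (ii): if $\pi_E$ is isotropic, then $(\pi_E)_\an$ is a proper quasi-Pfister subform (or $\sqf{1}_E$), so $\dim(\pi_E)_\an \le p^{n-1} = \tfrac{1}{p}\dim\pi < \dim\varphi = \dim(c\varphi_E)$. Since $c\varphi_E \subseteq \pi_E$, were $c\varphi_E$ anisotropic then $D_E(c\varphi_E) \subseteq D_E(\pi_E) = D_E((\pi_E)_\an)$, and Lemma~\ref{Lem:p-subform}(ii) would embed $c\varphi_E \simeq (c\varphi_E)_\an$ into $(\pi_E)_\an$, contradicting the dimension bound; hence $c\varphi_E$, and so $\varphi_E$, must be isotropic. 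The main technical step I anticipate is the $p$-basis reduction in (ii), which is classical commutative algebra but must be matched precisely with the diagonal presentation of $(\pi_E)_\an$ provided by Lemma~\ref{Lem:p-subform}; everything else is essentially bookkeeping once the field structure of $D_E(\pi)$ is in hand.
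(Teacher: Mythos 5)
This lemma is quoted from the literature (Hoffmann, Scully) and the paper itself offers no proof, so there is nothing internal to compare against — but your argument is correct and self-contained. You identify the right structural fact, namely that $D_E(\pi)=E^p[a_1,\dots,a_n]=E^p(a_1,\dots,a_n)$ is a field (a finite-dimensional integral domain over $E^p$); the proof of (i) via $xD_E(\pi)=D_E(\pi)$ and Lemma~\ref{Lem:p-subform}(i) is sound, the extraction of a $p$-basis of $E^p(a_1,\dots,a_n)$ from the generating set $\{a_1,\dots,a_n\}$ for (ii) is valid (the exchange property guarantees a maximal $p$-independent subset generates), and the dimension-count argument in (iii) — embedding an anisotropic $c\varphi_E$ into $(\pi_E)_\an$ via $D_E(c\varphi_E)\subseteq D_E((\pi_E)_\an)$ and Lemma~\ref{Lem:p-subform}(ii), then contradicting $\dim\varphi>p^{n-1}\ge\dim(\pi_E)_\an$ — is exactly the standard mechanism. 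This is essentially the argument appearing in Hoffmann's Propositions~4.6 and~4.14, so your reconstruction matches the cited source in spirit.
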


%-----------------------------------------
\subsection{$p$-bases, norm fields and norm forms} \label{Subsec:p-bases}

Let $a_1,\dots,a_n\in F$. We call the set $\{a_1,\dots,a_n\}$ \emph{$p$-independent over $F$} if $[F^p(a_1,\dots,a_n):F^p]=p^n$, and \emph{$p$-dependent over $F$} otherwise. The set $\{a_1,\dots,a_n\}$ is $p$-independent over $F$ if and only if the quasi-Pfister form $\pf{a_1,\dots,a_n}$ is anisotropic over $F$ (\cite[Cor.~2.8]{KZ23-fsp}). A (possibly infinite) set $\mathcal{S}\subseteq F$ is called \emph{$p$-independent over $F$} if any finite subset of $\mathcal{S}$ is $p$-independent over $F$. 

Let $E$ be a field such that $F^p\subseteq E\subseteq F$; a set $\mB\subseteq E$ is called a \emph{$p$-basis of $E$ over $F$} if $\mB$ is $p$-independent over $F$ and $E=F^p(\mB)$. Such a $p$-basis always exists, and any subset of $E$ that is $p$-independent over $F$ can be extended into a $p$-basis of $E$ over $F$ (\cite[Cor.~A.8.9]{CSAandGC}). A set $\mB=\{b_i~|~i\in I\}$ is a $p$-basis of $E$ over $F$ if and only if
\[\widehat{\mB}=\left\{\prod_{i\in I}b_i^{\lambda(i)}~\Big|~\lambda: I\goto\{0,\dots,p-1\}, \lambda(i)=0 \text{ for almost all } i\in I\right\}\]
is an $F^p$-linear basis of $E$ (\!\!\cite[pg.~27]{Pick50}); in such case, any $a\in E$ can be expressed \emph{uniquely} as
\[a=\sum_\lambda x_\lambda^p\prod_{i\in I}b_i^{\lambda(i)}\]
for some $x_\lambda\in F$, almost all of them zero. By abuse of language, we say that \emph{$a$ can be expressed uniquely with respect to $\mB$}.

\bigskip

Let $\varphi$ be a $p$-form over $F$. We define the \emph{norm field of $\varphi$ over $F$} as the field 
\[N_F(\varphi)=F^p\left(\frac{a}{b}~\Bigl|~a,b\in D_F^*(\varphi)\right).\] 
By the definition, we have $N_F(\varphi)=N_F(\varphi_\an)$ and also that $N_F(\varphi)=N_F(c\varphi)$ for any $c\in F^*$. Furthermore, if $\psi$ is another $p$-form over $F$ satisfying $\psi\simeq\varphi$, then $N_F(\psi)=N_F(\varphi)$. Finally, if $\tau_\an\subseteq c\varphi_\an$ for some $p$-form $\tau$ over $F$ and $c\in F^*$, then $N_F(\tau)\subseteq N_F(\varphi)$.

\begin{lemma}[{\cite[Lemma~4.2 and Cor.~4.3]{Hof04}}]\label{Lemma:NormFieldBasics}
Let $\varphi$ be a $p$-form over $F$.
\begin{enumerate}
	\item If $\varphi\simeq\sqf{a_0,\dots,a_n}$ for some $n\geq1$ and $a_i\in F$, $0\leq i\leq n$, with $a_0\neq0$, then $N_F(\varphi)=F^p\bigl(\frac{a_1}{a_0},\dots,\frac{a_n}{a_0}\bigr)$.
	\item Suppose $N_F(\varphi)=F^p(b_1,\dots,b_m)$ for some $b_i\in F^*$, $1\leq i \leq m$,  and let $E/F$ be a field extension. Then $N_E(\varphi)=E^p(b_1,\dots,b_m)$.
\end{enumerate}
\end{lemma}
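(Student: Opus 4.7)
The plan is to prove (i) directly from the definition of $N_F(\varphi)$, and then deduce (ii) from (i) applied to both $\varphi$ over $F$ and $\varphi_E$ over $E$.

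For (i), one inclusion is immediate: since $a_0 \in D_F^*(\varphi)$ and each $a_i \in D_F(\varphi)$, any $a_i$ with $a_i\neq 0$ yields $a_i/a_0 \in N_F(\varphi)$; and if $a_i=0$, then $a_i/a_0 = 0 \in F^p$. Hence $F^p\bigl(\tfrac{a_1}{a_0},\dots,\tfrac{a_n}{a_0}\bigr) \subseteq N_F(\varphi)$. For the reverse inclusion, I would use the fact, noted just before the lemma, that $D_F(\varphi) = \spn_{F^p}\{a_0,\dots,a_n\}$. Thus every $a \in D_F^*(\varphi)$ has the form
\[a \;=\; \sum_{i=0}^n x_i^p a_i \;=\; a_0 \sum_{i=0}^n x_i^p \tfrac{a_i}{a_0} \;=\; a_0 \cdot \alpha\]
for some $\alpha \in F^p\bigl(\tfrac{a_1}{a_0},\dots,\tfrac{a_n}{a_0}\bigr)$. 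Writing a second element $b \in D_F^*(\varphi)$ in the same form $b = a_0\beta$, the ratio $a/b = \alpha/\beta$ lies in the field $F^p\bigl(\tfrac{a_1}{a_0},\dots,\tfrac{a_n}{a_0}\bigr)$, and the reverse inclusion follows.

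For (ii), fix a diagonalization $\varphi \simeq \sqf{a_0,\dots,a_n}$ with $a_0\neq 0$ (we may assume $\dim\varphi \geq 1$ and that not all $a_i$ are zero, otherwise $N_F(\varphi)=F^p$ and the claim is vacuous). Over $E$ we have $\varphi_E \simeq \sqf{a_0,\dots,a_n}$ as $p$-forms, so by the same description $D_E(\varphi_E) = \spn_{E^p}\{a_0,\dots,a_n\}$. Applying part (i) both over $F$ and over $E$ yields
\[
N_F(\varphi) \;=\; F^p\bigl(\tfrac{a_1}{a_0},\dots,\tfrac{a_n}{a_0}\bigr),
\qquad
N_E(\varphi) \;=\; E^p\bigl(\tfrac{a_1}{a_0},\dots,\tfrac{a_n}{a_0}\bigr).
\]
The hypothesis $F^p\bigl(\tfrac{a_1}{a_0},\dots,\tfrac{a_n}{a_0}\bigr) = F^p(b_1,\dots,b_m)$ means each $b_j$ is an $F^p$-rational expression in the $a_i/a_0$ and each $a_i/a_0$ is an $F^p$-rational expression in the $b_j$. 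Since $F^p \subseteq E^p$, these same expressions witness that $E^p\bigl(\tfrac{a_1}{a_0},\dots,\tfrac{a_n}{a_0}\bigr) = E^p(b_1,\dots,b_m)$, which is the desired equality $N_E(\varphi)=E^p(b_1,\dots,b_m)$.

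There is no real obstacle here; the argument is essentially bookkeeping with the $F^p$-linear structure of $D_F(\varphi)$. The only step that requires a moment's care is the base-change identity $D_E(\varphi_E) = \spn_{E^p}\{a_0,\dots,a_n\}$, which follows from the defining rule $\varphi_E(e\otimes v) = e^p\varphi(v)$ together with additivity. Once part (i) is in hand, part (ii) reduces to the trivial observation that adjoining the same set of elements to a larger base field preserves equality of subfield generators.
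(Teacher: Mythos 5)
Your proof is correct. Note first that the paper itself does not prove this lemma; it cites it from Hoffmann's paper [Hof04, Lemma~4.2 and Cor.~4.3], so there is no in-paper argument to compare against. Your direct definition-chase is the natural route and matches what one would expect Hoffmann's proof to be.

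A few small remarks on the bookkeeping. In part (i), the reverse inclusion is the only thing requiring thought: writing $a=a_0\alpha$, $b=a_0\beta$ with $\alpha,\beta\in F^p\bigl(\tfrac{a_1}{a_0},\dots,\tfrac{a_n}{a_0}\bigr)$ and $\beta\neq0$ (since $b\neq0$) gives $a/b=\alpha/\beta$ inside that field — this is fine, and the observation that $\alpha = x_0^p + \sum_{i\ge1}x_i^p\,\tfrac{a_i}{a_0}$ is an $F^p$-linear combination of $1$ and the generators is what makes it land there. In part (ii), the two ingredients you isolate are exactly the right ones: the compatibility $D_E(\varphi_E)=\spn_{E^p}\{a_0,\dots,a_n\}$, which follows from the construction of $\varphi_E$ on $E\otimes V$, and the observation that if $F^p(\tfrac{a_1}{a_0},\dots,\tfrac{a_n}{a_0})=F^p(b_1,\dots,b_m)$ then every $b_j$ (resp.\ every $a_i/a_0$) is a rational expression in the other set with $F^p$-coefficients, and since $F^p\subseteq E^p$ these same identities hold over $E^p$, giving $E^p(\tfrac{a_1}{a_0},\dots,\tfrac{a_n}{a_0})=E^p(b_1,\dots,b_m)$. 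Your aside about degenerate cases ($\varphi$ the zero form or $\dim\varphi=1$) is appropriate; in those cases $N_F(\varphi)=F^p$, the $b_i$ lie in $F^p\subseteq E^p$, and the conclusion is immediate rather than vacuous, but this is a harmless slip of phrasing.
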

The following proposition provides an alternative possibility for a determination of the norm field.

\begin{proposition} \label{Prop:p-forms_NormfieldCharacterisation}
Let $\varphi=\sqf{1,a_1,\dots, a_n}$ be a $p$-form over $F$, and let $b_1,\dots,b_m\in F$. Then
\[
\bigl(\varphi_{F(\sqrt[p]{b_1}, \dots, \sqrt[p]{b_m})}\bigr)_{\an}\simeq\sqf{1} 
\ \ \ \Longleftrightarrow \ \ \ 
N_F(\varphi)\subseteq F^p(b_1,\dots,b_m).
\]
If, moreover, $\{b_1,\dots,b_m\}\subseteq\{a_1,\dots,a_n\}$, then
\[
\bigl(\varphi_{F(\sqrt[p]{b_1}, \dots, \sqrt[p]{b_m})}\bigr)_{\an}\simeq\sqf{1} 
\ \ \ \Longleftrightarrow \ \ \ 
N_F(\varphi)= F^p(b_1,\dots,b_m).
\] 
In particular, if $m$ is minimal with this property, then $\ndeg_F\varphi=p^m$.
\end{proposition}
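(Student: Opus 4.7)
The plan is to reduce everything to a single identity about the field $K = F(\sqrt[p]{b_1}, \dots, \sqrt[p]{b_m})$, namely $K^p = F^p(b_1, \dots, b_m)$. Once this is established, the rest is bookkeeping.

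First, I would analyze the left-hand side. Since $\varphi_K \simeq \sqf{1,a_1,\dots,a_n}$ over $K$, we have
\[
D_K(\varphi_K) = K^p + K^p a_1 + \dots + K^p a_n.
\]
By Lemma~\ref{Lem:p-subform}(i), $(\varphi_K)_{\an} \simeq \sqf{1}$ iff $D_K(\varphi_K)$ has $K^p$-dimension $1$. Because $1 \in D_K(\varphi_K)$, this happens iff $D_K(\varphi_K) = K^p$, which is equivalent to $a_i \in K^p$ for every $i$.

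The key step is the identity $K^p = F^p(b_1,\dots,b_m)$. Setting $c_i = \sqrt[p]{b_i}$, every element of $K$ is an $F$-linear combination of monomials $c_1^{\alpha_1}\cdots c_m^{\alpha_m}$ with $0 \leq \alpha_i < p$, because the $c_i$ are integral over $F$ with $c_i^p = b_i \in F$. Using additivity of Frobenius in characteristic $p$,
\[
\Bigl(\sum_\alpha f_\alpha\, c_1^{\alpha_1}\cdots c_m^{\alpha_m}\Bigr)^p \;=\; \sum_\alpha f_\alpha^p\, b_1^{\alpha_1}\cdots b_m^{\alpha_m} \;\in\; F^p(b_1,\dots,b_m),
\]
so $K^p \subseteq F^p(b_1,\dots,b_m)$; the reverse inclusion is trivial since each $b_i = c_i^p \in K^p$. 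Combining this with Lemma~\ref{Lemma:NormFieldBasics}(i) (applied with $a_0 = 1$), which gives $N_F(\varphi) = F^p(a_1,\dots,a_n)$, yields the first equivalence: $\{a_1,\dots,a_n\} \subseteq K^p = F^p(b_1,\dots,b_m)$ is the same as $N_F(\varphi) \subseteq F^p(b_1,\dots,b_m)$.

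For the second equivalence, the hypothesis $\{b_1,\dots,b_m\} \subseteq \{a_1,\dots,a_n\}$ forces $F^p(b_1,\dots,b_m) \subseteq F^p(a_1,\dots,a_n) = N_F(\varphi)$ automatically, so the inclusion becomes equality. Finally, if $m$ is minimal with this property, then $\{b_1,\dots,b_m\}$ must be $p$-independent over $F$: otherwise some $b_i$ would lie in $F^p(b_1,\dots,\widehat{b_i},\dots,b_m)$ and could be discarded without changing $F^p(b_1,\dots,b_m) = N_F(\varphi)$, contradicting minimality. Hence $[N_F(\varphi):F^p] = p^m$, i.e., $\ndeg_F \varphi = p^m$.

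There is no real obstacle: the only substantive content is the identity $K^p = F^p(b_1,\dots,b_m)$, and even that is a short consequence of Frobenius being a ring homomorphism in characteristic $p$ together with the description of $K$ as an $F$-span of monomials in the $c_i$.
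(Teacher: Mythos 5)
Your proof is correct and follows essentially the same route as the paper: identify $(\varphi_K)_{\an}\simeq\sqf{1}$ with $a_i\in K^p$ for all $i$, establish $K^p=F^p(b_1,\dots,b_m)$, and combine with $N_F(\varphi)=F^p(a_1,\dots,a_n)$ from Lemma~\ref{Lemma:NormFieldBasics}. The only difference is that you spell out the Frobenius argument for the identity $K^p=F^p(b_1,\dots,b_m)$ and the $p$-independence needed for the final norm-degree claim, which the paper leaves implicit.
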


\begin{proof}
We set $E=F\bigl(\sqrt[p]{b_1}, \dots, \sqrt[p]{b_m}\bigr)$. 

First, $(\varphi_E)_{\an}\simeq\sqf{1}$ is equivalent to $\spn_{E^p}\{1\}=\spn_{E^p}\{1,a_1,\dots,a_n\}$, which holds if and only if $a_i\in E^p$ for all $1\leq i \leq n$. As $E^p=F^p(b_1,\dots,b_m)$, the latter condition is equivalent to $F^p(a_1,\dots,a_n)\subseteq F^p(b_1,\dots,b_m)$. But $F^p(a_1,\dots,a_n)=N_F(\varphi)$ by Lemma~\ref{Lemma:NormFieldBasics}, so we are done.

If $\{b_1,\dots,b_m\}\subseteq\{a_1,\dots,a_n\}$, then $F^p(b_1,\dots,b_m)\subseteq N_F(\varphi)$, and the claim follows by the previous case.
\end{proof}

Note that $N_F(\varphi)$ is a finite field extension of $F^p$, and hence there always exists a finite $p$-basis $\{b_1,\dots,b_n\}$ of $N_F(\varphi)$ over $F$, i.e., $N_F(\varphi)=F^p(b_1,\dots,b_n)$ with $[F^p(b_1,\dots,b_n):F^p]=p^n$. Then $p^n$ is called the \emph{norm degree of $\varphi$ over $F$} and denoted by $\ndeg_F{\varphi}$. There is a relation between the norm degree and the dimension of a $p$-form.

\begin{lemma}[{\cite[Prop.~4.8]{Hof04}}]\label{Lemma:PrereqForNormForm} 
Let $\varphi$ be a nonzero $p$-form with $\ndeg_F\varphi=p^n$. Then $n+1\leq\dim\varphi_\an\leq p^n$.
\end{lemma}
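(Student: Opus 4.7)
The plan is to reduce to the anisotropic case and then establish the two inequalities separately using the description of the norm field from Lemma~\ref{Lemma:NormFieldBasics}. Since $N_F(\varphi)=N_F(\varphi_{\an})$ and we are bounding $\dim\varphi_{\an}$, I may assume from the start that $\varphi$ itself is anisotropic. Because $\varphi$ is nonzero, it is similar to a form of the shape $\sqf{1,c_1,\dots,c_m}$ with $m+1=\dim\varphi$, and similarity preserves both the norm field and the dimension of the anisotropic part. Lemma~\ref{Lemma:NormFieldBasics}(i) then gives $N_F(\varphi)=F^p(c_1,\dots,c_m)$ (the case $m=0$ is handled directly: $N_F(\sqf{1})=F^p$, so $n=0$ and both inequalities read $1\leq 1\leq 1$).

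For the upper bound $\dim\varphi_{\an}\leq p^n$, the key observation is that every $c_i$ lies in $N_F(\varphi)$, and $N_F(\varphi)$ is an $F^p$-subspace of $F$ of dimension $p^n$. Hence
\[
D_F(\varphi)=\spn_{F^p}\{1,c_1,\dots,c_m\}\subseteq N_F(\varphi),
\]
and since $\varphi$ is anisotropic, Lemma~\ref{Lem:p-subform}(i) yields $\dim\varphi_{\an}=\dim_{F^p}D_F(\varphi)\leq \dim_{F^p}N_F(\varphi)=p^n$.

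For the lower bound $\dim\varphi_{\an}\geq n+1$, I use that $N_F(\varphi)=F^p(c_1,\dots,c_m)$ is a purely inseparable extension of $F^p$ of exponent at most one, since each $c_i\in F$ satisfies $c_i^p\in F^p$. Adjoining a single such element multiplies the degree by $1$ or $p$, so by the tower law $[F^p(c_1,\dots,c_m):F^p]\leq p^m$. Combined with $[N_F(\varphi):F^p]=p^n$, this gives $p^n\leq p^m$, hence $\dim\varphi_{\an}=m+1\geq n+1$.

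I do not expect a serious obstacle: both inequalities fall out once $\varphi$ is normalized so that $1\in D_F(\varphi)$, at which point the upper bound is the inclusion $D_F(\varphi)\subseteq N_F(\varphi)$ and the lower bound is the elementary degree estimate for a purely inseparable extension generated by $m$ elements. The only subtlety worth noting is the edge case $\dim\varphi_{\an}=1$, where Lemma~\ref{Lemma:NormFieldBasics}(i) does not apply directly; this is dispatched by a one-line direct computation showing $N_F(\sqf{a_0})=F^p$.
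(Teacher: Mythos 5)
Your proof is correct. Since the paper delegates the proof to \cite[Prop.~4.8]{Hof04} rather than proving it internally, there is no in-paper argument to compare against, but your route is the natural one: after normalizing to $\varphi\simeq\sqf{1,c_1,\dots,c_m}$ anisotropic, the upper bound is the containment $D_F(\varphi)\subseteq N_F(\varphi)$ as $F^p$-spaces together with Lemma~\ref{Lem:p-subform}(i), and the lower bound is the degree estimate $[F^p(c_1,\dots,c_m):F^p]\leq p^m$ for an exponent-one purely inseparable extension. Both inequalities are tight (quasi-Pfister forms for the upper, minimal forms for the lower), which is consistent with the argument giving nothing finer. The edge case $\dim\varphi_{\an}=1$ is handled correctly. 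No gaps.
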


By \cite[Cor.~A.8.9]{CSAandGC}, any $p$-generating set contains a $p$-basis. Therefore, part (i) of Lemma~\ref{Lemma:NormFieldBasics} implies the following:

\begin{lemma}\label{Lem:BasisSubform}
Let $\varphi\simeq\sqf{a_0,\dots,a_n}$ for some $n\geq1$ and $a_0,\dots,a_n\in F$ with $a_0\neq0$. Moreover, suppose that $\ndeg_F\varphi=p^k$. Then there exists a subset $\{i_1,\dots,i_k\}\subseteq\{1,\dots,n\}$ such that $\left\{\frac{a_{i_1}}{a_0},\dots,\frac{a_{i_k}}{a_0}\right\}$ is a $p$-basis of $N_F(\varphi)$ over $F$.

 In particular, if $1\in D_F^*(\varphi)$, then there exist $b_1,\dots,b_n\in F$ such that $\varphi\simeq\sqf{1,b_1,\dots,b_n}$ and $N_F(\varphi)=F^p(b_1,\dots,b_k)$.
\end{lemma}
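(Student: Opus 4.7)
The plan is to derive the first assertion as a direct consequence of Lemma~\ref{Lemma:NormFieldBasics}(i) together with the cited commutative-algebra fact that every $p$-generating set of a field over its subfield contains a $p$-basis. Specifically, Lemma~\ref{Lemma:NormFieldBasics}(i) gives
\[
N_F(\varphi) = F^p\!\left(\tfrac{a_1}{a_0}, \dots, \tfrac{a_n}{a_0}\right),
\]
so $S = \bigl\{\tfrac{a_1}{a_0}, \dots, \tfrac{a_n}{a_0}\bigr\}$ is a $p$-generating set of $N_F(\varphi)$ over $F$. By \cite[Cor.~A.8.9]{CSAandGC}, $S$ contains a $p$-basis $S'$ of $N_F(\varphi)$ over $F$; since $\ndeg_F\varphi = p^k$ forces $[N_F(\varphi):F^p] = p^k$, any such $S'$ has exactly $k$ elements, and reading off its indices inside $S$ provides the required subset $\{i_1, \dots, i_k\} \subseteq \{1, \dots, n\}$.

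For the `in particular' claim, I would first use $1 \in D_F^*(\varphi)$ together with Lemma~\ref{Lem:p-subform}(ii) to deduce $\sqf{1} = \sqf{1}_\an \subseteq \varphi$, which yields a decomposition $\varphi \simeq \sqf{1} \ort \tau$ for some $p$-form $\tau$ of dimension $n$. Writing $\tau \simeq \sqf{c_1, \dots, c_n}$ gives $\varphi \simeq \sqf{1, c_1, \dots, c_n}$, and applying the first part with $a_0 = 1$ extracts a subset $\{c_{i_1}, \dots, c_{i_k}\}$ that is a $p$-basis of $N_F(\varphi)$ over $F$. Reordering the entries of $\tau$ so that these $c_{i_j}$ appear first and relabeling the full list as $b_1, \dots, b_n$ then yields the claimed isometry $\varphi \simeq \sqf{1, b_1, \dots, b_n}$ with $N_F(\varphi) = F^p(b_1, \dots, b_k)$.

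The only nontrivial ingredient is the cited result that every $p$-generating set contains a $p$-basis; once that is invoked, the argument is essentially bookkeeping, and I do not foresee a genuine obstacle. A small subtlety worth verifying is that the subset extracted in the first assertion really consists of distinct indices in $\{1, \dots, n\}$ and that its cardinality is forced to be $k$, but this is automatic from $S' \subseteq S$ and the dimension identity $[N_F(\varphi):F^p] = p^k$.
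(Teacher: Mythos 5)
Your proposal is correct and matches the paper's (unwritten-out) argument exactly: the paper just remarks that Lemma~\ref{Lemma:NormFieldBasics}(i) together with the fact that any $p$-generating set contains a $p$-basis (\cite[Cor.~A.8.9]{CSAandGC}) implies the lemma, and your proof is precisely the bookkeeping that fills in those two steps, including the correct handling of the ``in particular'' clause via $\sqf{1}\subseteq\varphi$ and a reordering of the remaining entries.
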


\bigskip

Let $\varphi$ be a $p$-form over $F$ with $\ndeg_F{\varphi}=p^n$ and ${N_F(\varphi)=F^p(a_1,\dots,a_n)}$ (so, in particular, $\{a_1,\dots,a_n\}$ is $p$-independent over $F$). Then we define the \emph{norm form} of $\varphi$ over $F$, denoted by $\nf_F(\varphi)$, as the (necessarily anisotropic) quasi-Pfister form $\pf{a_1,\dots,a_n}$. It follows that $\nf_F(\varphi)$ is the smallest quasi-Pfister form that contains a scalar multiple of $\varphi_\an$ as its subform. In particular, if $\varphi_\an$ is a quasi-Pfister form itself, then we have $\nf_F(\varphi)\simeq\varphi_\an$.

%------------------------------------------------
\subsection{Isotropy} 

Let $\varphi$ be an anisotropic $p$-form over $F$, and let $E/F$ be a field extension. If $E/F$ is purely transcendental or separable, then $\varphi_E$ remains anisotropic (\!\!\cite[Prop.~5.3]{Hof04}).

\begin{lemma}[{\cite[Lemma~2.27]{Scu16-Hoff}}] \label{Lemma:p-forms_BasicIsotropy}
Let $\varphi$ be a $p$-form over $F$ and let $a\in F\setminus F^p$. Then:
\begin{enumerate}
	\item $D_{F(\sqrt[p]{a})}(\varphi)=D_F(\pf{a}\otimes\varphi)=\sum_{i=0}^{p-1}a^iD_F(\varphi)$. \label{Lemma:p-forms_BasicIsotropy_i}
	\item $\iql{\varphi_{F(\sqrt[p]{a})}}=\frac1p\iql{\pf{a}\otimes\varphi}$.
	\item \label{Lemma:p-forms_BasicIsotropy_Part_NormField} 
					If $\varphi$ is anisotropic and $\varphi_{F(\sqrt[p]{a})}$ is isotropic, then $a\in N_F(\varphi)$.
	\item \label{Item:DimOverInseparabelFE} 
					$\dim(\varphi_{F(\sqrt[p]{a})})_{\an}\geq\frac1p\dim\varphi_{\an}$.
	\item Equality holds in \ref{Item:DimOverInseparabelFE} if and only if there exists a $p$-form $\gamma$ over $F$ such that $\varphi_{\an}\simeq\pf{a}\otimes\gamma$. %, if and only if $a\in G_F(\varphi)$.
\end{enumerate}
\end{lemma}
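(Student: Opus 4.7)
The plan is to reduce every part to linear algebra on the $F^p$-subspace $D_F(\varphi)\subseteq F$, using the fact that $E^p=F^p(a)$ is an $F^p$-vector space of dimension $p$ with basis $\{1,a,\dots,a^{p-1}\}$ (exactly because $a\notin F^p$). Write $\varphi\simeq\sqf{a_1,\dots,a_n}$ and set $E=F(\sqrt[p]{a})$. Part (i) follows at once: $D_E(\varphi)=\spn_{E^p}\{a_1,\dots,a_n\}=\sum_{i=0}^{p-1}a^i\spn_{F^p}\{a_1,\dots,a_n\}$, and the diagonalization $\pf{a}\otimes\varphi\simeq\sqf{a^ia_j}$ produces the same sum for $D_F(\pf{a}\otimes\varphi)$.

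For parts (ii) and (iv) I invoke Lemma~\ref{Lemma:PickAnisp-subform} to identify anisotropic dimensions with $F^p$- or $E^p$-dimensions of the sets $D_F(\pf{a}\otimes\varphi)$ and $D_E(\varphi)$. These two sets agree by (i), and the transitivity $\dim_{F^p}(V)=p\cdot\dim_{E^p}(V)$ for any $E^p$-subspace $V\subseteq F$ (since $[E^p:F^p]=p$) immediately gives (ii) after subtracting the anisotropic dimensions from $pn$ and $n$. For (iv), the inclusion $D_F(\varphi)\subseteq\sum_ia^iD_F(\varphi)=D_E(\varphi)$ yields $p\dim(\varphi_E)_{\an}=\dim_{F^p}D_E(\varphi)\geq\dim_{F^p}D_F(\varphi)=\dim\varphi_{\an}$. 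Equality in (iv) forces $\sum_ia^iD_F(\varphi_{\an})=D_F(\varphi_{\an})$, i.e.\ $aD_F(\varphi_{\an})\subseteq D_F(\varphi_{\an})$, so that $D_F(\varphi_{\an})$ is an $E^p$-subspace of $F$; picking an $E^p$-basis $\{d_1,\dots,d_m\}$ produces the $F^p$-basis $\{a^id_j\}$, hence $\varphi_{\an}\simeq\pf{a}\otimes\sqf{d_1,\dots,d_m}$ by Lemma~\ref{Lemma:PickAnisp-subform}. The converse direction of (v) is clear because $D_F(\pf{a}\otimes\gamma)$ is visibly closed under multiplication by $a$, so the reverse inequality in (iv) becomes equality.

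The part I expect to require the most care is (iii). My strategy is: by Lemma~\ref{Lem:BasisSubform}, rescale $\varphi$ so that $1\in D_F^*(\varphi)$, which changes neither $N_F(\varphi)$ nor isotropy of $\varphi_E$; then $D_F(\varphi)\subseteq N_F(\varphi)$ directly. Isotropy of $\varphi_E$ gives a nontrivial $E^p$-linear relation $\sum_j\alpha_ja_j=0$; expanding each coefficient as $\alpha_j=\sum_k\beta_{j,k}a^k$ with $\beta_{j,k}\in F^p$ regroups the relation as $\sum_{k=0}^{p-1}a^kc_k=0$ with $c_k=\sum_j\beta_{j,k}a_j\in D_F(\varphi)\subseteq N_F(\varphi)$. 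Anisotropy of $\varphi$ forbids the vanishing of all $c_k$ (else $F^p$-linear independence of the $a_j$ would give $\beta_{j,k}=0$ for all $j,k$). Since $a^p\in F^p\subseteq N_F(\varphi)$, the element $a$ is a root of $X^p-a^p=(X-a)^p\in N_F(\varphi)[X]$, so $[N_F(\varphi)(a):N_F(\varphi)]\in\{1,p\}$; the nontrivial relation excludes the degree-$p$ case, forcing $a\in N_F(\varphi)$.
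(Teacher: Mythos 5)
The paper does not prove this lemma itself; it is cited from Scully's work, so there is no in-text proof to compare against. Your argument is correct and self-contained. Parts (i), (ii), (iv), and (v) are handled uniformly by the single observation that, writing $E=F(\sqrt[p]{a})$, the set $D_E(\varphi)=D_F(\pf{a}\otimes\varphi)$ is simultaneously an $F^p$- and an $E^p$-subspace of $F$ with $[E^p:F^p]=p$, so the $F^p$-dimension is $p$ times the $E^p$-dimension; everything then follows from Lemma~\ref{Lemma:PickAnisp-subform}. For part (iii), packaging the nontrivial $E^p$-linear dependence $\sum_j\alpha_ja_j=0$ into a polynomial relation $\sum_{k=0}^{p-1}c_ka^k=0$ with $c_k\in D_F(\varphi)\subseteq N_F(\varphi)$ not all zero, and playing it off against $[N_F(\varphi)(a):N_F(\varphi)]\in\{1,p\}$, is a clean and valid device.

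Two small glosses. First, the rescaling to $1\in D_F^*(\varphi)$ does not actually need Lemma~\ref{Lem:BasisSubform}: pick any $c\in D_F^*(\varphi)$ and replace $\varphi$ by $c^{-1}\varphi$; the inclusion $D_F(\varphi)\subseteq N_F(\varphi)$ afterward is part (i) of Lemma~\ref{Lemma:NormFieldBasics}. Second, the claim $[N_F(\varphi)(a):N_F(\varphi)]\in\{1,p\}$ deserves one sentence: the minimal polynomial of $a$ over $N_F(\varphi)$ divides $(X-a)^p$, hence equals $(X-a)^d$ for some $1\le d\le p$; if $1\le d<p$ its coefficient of $X^{d-1}$ is $-da\neq0$, which must lie in $N_F(\varphi)$, forcing $a\in N_F(\varphi)$ and hence $d=1$. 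With that spelled out, your degree argument closes part (iii) without any gap.
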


We want to point out that $N_F(\varphi)$ is the smallest field extension of $F^p$ with property \ref{Lemma:p-forms_BasicIsotropy_Part_NormField} of Lemma~\ref{Lemma:p-forms_BasicIsotropy}. More precisely, we have the following:

\begin{proposition}\label{Prop:IsotropyAndNormField}
Let $\varphi$ be an anisotropic $p$-form over $F$ and $E/F^p$ be a field extension such that the following holds for any $a\in F^*$:
\begin{equation} \label{Eq:NecCondIsotropypRoot}
\iql{\varphi_{F(\sqrt[p]{a})}}>0 \quad \Longrightarrow \quad a\in E. 
\tag{{\smaller[4]{\PHrosette}}}
\end{equation}
Then $N_F(\varphi)\subseteq E$.
\end{proposition}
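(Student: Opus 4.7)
My plan is to reduce to the case where $1 \in D_F^*(\varphi)$, diagonalise $\varphi$ so that the generators of $N_F(\varphi)$ over $F^p$ appear explicitly as entries, and then apply the hypothesis \eqref{Eq:NecCondIsotropypRoot} to each such entry individually.

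First, I would observe that both the norm field $N_F(\varphi)$ and the isotropy behaviour of $\varphi$ over the extensions $F(\sqrt[p]{a})$ are invariant under scaling $\varphi$ by any $c \in F^*$ (the first by the remarks preceding Lemma~\ref{Lemma:NormFieldBasics}, the second because $c\varphi$ and $\varphi$ have the same defect over any field extension). Replacing $\varphi$ by $c^{-1}\varphi$ for some $c \in D_F^*(\varphi)$, I may therefore assume $1 \in D_F^*(\varphi)$ and write $\varphi \simeq \sqf{1, a_1, \ldots, a_n}$, with every $a_i \in F^*$ by the anisotropy of $\varphi$. Lemma~\ref{Lemma:NormFieldBasics}(i) then yields $N_F(\varphi) = F^p(a_1, \ldots, a_n)$. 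Since $F^p \subseteq E$ by hypothesis, it suffices to show that each $a_i$ lies in $E$.

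The central step is the following observation: if $a_i \notin F^p$, then $\varphi_{F(\sqrt[p]{a_i})}$ is isotropic. This is immediate because the subform $\sqf{1, a_i} \subseteq \varphi$ already becomes isotropic over $F(\sqrt[p]{a_i})$: setting $\alpha = \sqrt[p]{a_i}$, the element $a_i = \alpha^p$ becomes a $p$-th power, and a one-line case-split on $p=2$ versus $p$ odd exhibits an explicit isotropic vector. Hence $\iql{\varphi_{F(\sqrt[p]{a_i})}} > 0$, so \eqref{Eq:NecCondIsotropypRoot} applied with $a = a_i$ gives $a_i \in E$. The case $a_i \in F^p$ is trivial since $F^p \subseteq E$. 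Putting this together, $N_F(\varphi) = F^p(a_1, \ldots, a_n) \subseteq E$.

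I do not expect any genuine obstacle. The proposition is, in essence, the converse of Lemma~\ref{Lemma:p-forms_BasicIsotropy}\ref{Lemma:p-forms_BasicIsotropy_Part_NormField} applied generator-by-generator to a diagonalisation of $\varphi$, and the only point requiring even mild care is the reduction to $1 \in D_F^*(\varphi)$, which uses only the scaling invariances noted above.
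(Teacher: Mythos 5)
Your proof is correct and takes essentially the same approach as the paper: reduce to $1\in D_F^*(\varphi)$ by scaling invariance, diagonalize, observe that $\varphi$ becomes isotropic over $F(\sqrt[p]{a_i})$ for each diagonal entry, and apply the hypothesis. The only cosmetic difference is that the paper first invokes Lemma~\ref{Lem:BasisSubform} to extract a $p$-basis $\{b_1,\dots,b_k\}$ from among the coefficients before applying the hypothesis, whereas you apply it to all the $a_i$ directly and let the field generated absorb any redundancy -- both work equally well.
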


\begin{proof}
Let $\ndeg_F\varphi=p^k$. Since neither the isotropy nor the norm field depends on the choice of the representative of the similarity class, we can assume $1\in D_F^*(\varphi)$. Invoking Lemma~\ref{Lem:BasisSubform}, we find $b_1,\dots,b_n\in F$ with $n\geq k$ such that $\varphi\simeq\sqf{1,b_1,\dots,b_n}$ and $\{b_1,\dots,b_k\}$ is a $p$-basis of $N_F(\varphi)$ over $F$. Since $\varphi_{F(\sqrt[p]{b_i})}$ is obviously isotropic for each $1\leq i\leq k$, we get by the assumption \eqref{Eq:NecCondIsotropypRoot} that $b_1,\dots,b_k\in E$. Since $E$ is a field containing $F^p$, it follows that $N_F(\varphi)=F^p(b_1,\dots,b_k)\subseteq E$.
\end{proof}

\begin{lemma}\label{Lem:quasiPFaddingSlot} 
Let $\varphi$ be an anisotropic quasi-Pfister form or an anisotropic quasi-Pfister neighbor over $F$, and let $x\in F^*$. Then $\varphi\otimes\pf{x}$ is isotropic if and only if $x\in N_F(\varphi)$.
\end{lemma}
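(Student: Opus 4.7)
The plan is to reduce the statement to the case where $\varphi$ is itself a quasi-Pfister form, and then to identify $\varphi\otimes\pf{x}$ with a slightly larger quasi-Pfister form whose (an)isotropy is controlled by $p$-independence of its parameters. The case $x\in F^p$ is trivial: $\pf{x}\simeq\pf{1}$ is isotropic, so $\varphi\otimes\pf{x}$ is isotropic, and $x\in F^p\subseteq N_F(\varphi)$. So from now on I would assume $x\notin F^p$.

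Next I would set $\pi\simeq\pf{a_1,\dots,a_n}\simeq\nf_F(\varphi)$; by construction $\pi$ is anisotropic, $N_F(\pi)=F^p(a_1,\dots,a_n)=N_F(\varphi)$, and the set $\{a_1,\dots,a_n\}$ is $p$-independent over $F$. If $\varphi$ is quasi-Pfister then $\pi\simeq\varphi$; otherwise $\varphi$ is a quasi-Pfister neighbor and in particular a neighbor of $\pi$, so Lemma~\ref{Lemma:PFandPN}(iii) applies to the pair $(\varphi,\pi)$. Using Lemma~\ref{Lemma:p-forms_BasicIsotropy}(ii) together with the fact that a $p$-form is isotropic iff it has positive defect, I obtain
\[
\varphi\otimes\pf{x}\text{ isotropic}\ \Longleftrightarrow\ \varphi_{F(\sqrt[p]{x})}\text{ isotropic}\ \Longleftrightarrow\ \pi_{F(\sqrt[p]{x})}\text{ isotropic}\ \Longleftrightarrow\ \pi\otimes\pf{x}\text{ isotropic},
\]
where the middle equivalence is Lemma~\ref{Lemma:PFandPN}(iii).

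The last step is to evaluate $\pi\otimes\pf{x}\simeq\pf{x,a_1,\dots,a_n}$, which by the $p$-independence criterion cited in Section~\ref{Subsec:p-bases} (\cite[Cor.~2.8]{KZ23-fsp}) is anisotropic iff $\{x,a_1,\dots,a_n\}$ is $p$-independent over $F$. Since $\{a_1,\dots,a_n\}$ is already $p$-independent, this is equivalent to $x\notin F^p(a_1,\dots,a_n)=N_F(\varphi)$; the contrapositive gives exactly the claim. There is no serious obstacle: the proof is a short chain of equivalences assembled from tools already developed in Section~\ref{Sec:Prel}; the only point worth emphasizing is the identification $N_F(\pi)=N_F(\varphi)$, which is immediate from the definition of the norm form.
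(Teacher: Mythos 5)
Your proof is correct. The paper's argument treats the quasi-Pfister case directly: since anisotropic quasi-Pfister forms correspond bijectively to finite extensions of $F^p$ inside $F$ (\cite[Prop.~4.6]{Hof04}), it deduces that $\pi\otimes\pf{x}$ is isotropic iff $(\pi\otimes\pf{x})_\an\simeq\pi$ iff $F^p(a_1,\dots,a_n,x)=F^p(a_1,\dots,a_n)$; for the neighbor case it observes that $c\varphi\otimes\pf{x}$ is itself a quasi-Pfister neighbor of $\pi\otimes\pf{x}$ and applies Lemma~\ref{Lemma:PFandPN}(iii) to that \emph{new} pair over $F$. You instead apply Lemma~\ref{Lemma:PFandPN}(iii) to the \emph{original} pair $(\varphi,\pi)$ but over the extension $E=F(\sqrt[p]{x})$, bridging tensor-isotropy and extension-isotropy on both ends via the defect formula of Lemma~\ref{Lemma:p-forms_BasicIsotropy}(ii), and close with the $p$-independence criterion \cite[Cor.~2.8]{KZ23-fsp}. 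The two routes are comparable in length and rest on the same two pillars (Lemma~\ref{Lemma:PFandPN}(iii) and the quasi-Pfister/$p$-basis dictionary); the only asymmetry is that your version must dispatch the case $x\in F^p$ separately — which you do, and which is necessary since Lemma~\ref{Lemma:p-forms_BasicIsotropy} assumes $a\notin F^p$ — whereas the paper's reduction needs no such case split. One small remark: your observation that any quasi-Pfister neighbor is in particular a neighbor of its own norm form $\nf_F(\varphi)$ is correct but worth making explicit, since the definition of a neighbor is stated relative to \emph{some} quasi-Pfister form, and it is the minimality of $\nf_F(\varphi)$ that guarantees the codimension bound is preserved.
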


\begin{proof}
First, we prove the lemma in the case when $\varphi\simeq\pi$ is a \mbox{quasi-Pfister} form. Assume that $\pi\simeq\pf{a_1,\dots,a_n}$, i.e., $N_F(\pi)=F^p(a_1,\dots,a_n)$. Note that $\pi$ is anisotropic. Then the form ${\pi\otimes\pf{x}}$ is isotropic if and only if $(\pi\otimes\pf{x})_{\an}\simeq\pi$ if and only if $F^p(a_1,\dots,a_n)=F^p(a_1,\dots,a_n,x)$ if and only if $x\in N_F(\pi)$.

Now suppose that $\varphi$ is a quasi-Pfister neighbor. Then $c\varphi\subseteq\pi$ for $\pi\simeq\nf_F(\varphi)$ and some $c\in F^*$. It follows that $c\varphi\otimes\pf{x}$ is a quasi-Pfister neighbor of $\pi\otimes\pf{x}$, and hence $c\varphi\otimes\pf{x}$ is isotropic if and only if $\pi\otimes\pf{x}$ is isotropic (Lemma~\ref{Lemma:PFandPN}). By the first part of the proof, the latter is equivalent to $x\in N_F(\pi)$. Since $N_F(\pi)=N_F(c\varphi)=N_F(\varphi)$, the claim follows.
\end{proof}

%------------------------------------------------
\subsection{Minimal forms}

In this paper, we will work with minimal forms; they are, in a way, a counterpart to quasi-Pfister forms.

\begin{definition}
Let $\varphi$ be an anisotropic $p$-form over $F$. We call $\varphi$ \emph{minimal over $F$} if $\ndeg_F\varphi=p^{\dim\varphi-1}$.
\end{definition}

Note that any one-dimensional anisotropic $p$-form is minimal. Minimal $p$-forms of dimension at least two are described by the following lemma:

\begin{lemma}[{\cite[Lemma~2.16]{KZ23-fsp}}]\label{Lem:PropertiesOfMinimalForms_pforms}
Let $\varphi,\psi$ be $p$-forms over $F$ of dimension at least two.
\begin{enumerate}
	\item The $p$-form $\sqf{1,a_1,\dots,a_n}$ is minimal over $F$ if and only if the set $\{a_1,\dots,a_n\}$ is $p$-independent over $F$.
	\item Minimality is not invariant under field extensions.
	\item If $\varphi$ is minimal over $F$ and $\psi\simsim\varphi$, then $\psi$ is minimal over $F$.
	\item If $\varphi$ is minimal over $F$ and $\psi\subseteq c\varphi$ for some $c\in F^*$, then $\psi$ is minimal over $F$.
	\item If $\ndeg_F\varphi=p^k$ with $k\geq1$, then $\varphi$ contains a minimal subform of dimension $k+1$.
\end{enumerate}
\end{lemma}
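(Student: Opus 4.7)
The plan is to prove the five parts in order, each building on the previous. For (i), Lemma~\ref{Lemma:NormFieldBasics}(i) gives $N_F(\sqf{1,a_1,\dots,a_n}) = F^p(a_1,\dots,a_n)$, and minimality amounts to $[F^p(a_1,\dots,a_n):F^p] = p^n$, which is exactly the definition of $p$-independence. Anisotropy, required by the definition of minimal, follows in one direction from $p$-independence via the anisotropic quasi-Pfister form $\pf{a_1,\dots,a_n}$ containing $\sqf{1,a_1,\dots,a_n}$ as a subform, and is built into the hypothesis in the converse. For (ii), an explicit counterexample suffices: pick $a \in F \setminus F^p$; then $\varphi = \sqf{1,a}$ is minimal over $F$ by (i), but over $E = F(\sqrt[p]{a})$ one has $a \in E^p$, so $\varphi_E \simeq \sqf{1,1}$ is isotropic (in characteristic $p$, $(1,-1)$ is an isotropic vector) and thus not minimal. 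For (iii), $\psi \simeq c\varphi$ yields $\dim\psi = \dim\varphi$ and $N_F(\psi) = N_F(c\varphi) = N_F(\varphi)$ (as noted in Section~\ref{Subsec:p-bases}); since anisotropy is scaling-invariant, minimality transfers immediately.

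The main work lies in (iv). I would first replace $c\varphi$ by the minimal form $\varphi' := c\varphi$ via (iii), and then scale both $\psi$ and $\varphi'$ by $a^{-1}$ for some $a \in D_F^*(\psi)$, which preserves both the subform relation and minimality; this lets us assume $1 \in D_F^*(\psi)$ and write $\psi \simeq \sqf{1, c_1, \dots, c_{s-1}}$ with $s = \dim\psi$. The geometric key step is that, by the definition of subform as an injective linear embedding compatible with the $p$-forms, a basis of the ambient space of $\psi$ realizing this diagonalization extends to a basis of the ambient space of $\varphi'$, producing $\varphi' \simeq \sqf{1, c_1, \dots, c_{s-1}, d_1, \dots, d_{n+1-s}}$ where $n+1 = \dim\varphi'$. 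Applying (i) to the minimal form $\varphi'$, the full set $\{c_1,\dots, c_{s-1}, d_1, \dots, d_{n+1-s}\}$ is $p$-independent over $F$; hence so is the subset $\{c_1,\dots,c_{s-1}\}$, by multiplicativity of degrees in the tower $F^p \subseteq F^p(c_1,\dots,c_{s-1}) \subseteq F^p(c_1,\dots,c_{s-1}, d_1,\dots, d_{n+1-s})$. By (i) again, $\psi$ is minimal.

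For (v), Lemma~\ref{Lem:BasisSubform} yields a diagonalization $\varphi \simeq \sqf{a_0, a_1, \dots, a_n}$ with $a_0 \neq 0$ and a subset of indices $\{i_1, \dots, i_k\}$ such that $\{a_{i_1}/a_0, \dots, a_{i_k}/a_0\}$ is a $p$-basis of $N_F(\varphi)$ over $F$. The subform $\tau := \sqf{a_0, a_{i_1}, \dots, a_{i_k}} \subseteq \varphi$ has dimension $k+1$, and since $a_0^{-1}\tau \simeq \sqf{1, a_{i_1}/a_0, \dots, a_{i_k}/a_0}$ is minimal by (i), so is $\tau$ by (iii).

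I expect the main obstacle to be the geometric extension step in (iv): unpacking the definition of subform and producing a diagonalization of $\varphi'$ that literally extends a prescribed diagonalization of $\psi$. Once this step is granted, everything reduces to the $p$-independence criterion (i) and the elementary norm-field identities collected in Section~\ref{Subsec:p-bases}.
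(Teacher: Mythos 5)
The paper cites this lemma from \cite{KZ23-fsp} without reproving it, so there is no in-paper proof to compare against; your argument is self-contained and correct. The step you flag as a potential obstacle in (iv) is already packaged into the paper's subform convention --- $\psi\subseteq\tau$ is stated to imply $\tau\simeq\psi\ort\sigma$ for some $p$-form $\sigma$ --- so once $\psi$ is diagonalized as $\sqf{1,c_1,\dots,c_{s-1}}$, a diagonalization of the minimal form $\varphi'$ extending it comes for free, and the rest of (iv), as well as (v), reduces to the $p$-independence criterion (i) exactly as you say.
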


We can characterize elements represented by a given minimal $p$-form.

\begin{lemma} \label{Lemma:pforms_Minimal_Repr}
Let $\varphi$ be a minimal $p$-form over $F$ with $1\in D_F(\varphi)$. Let $n\geq2$ and $\{b_1,\dots,b_n\}\subseteq D_F(\varphi)$ be $p$-independent over $F$. Denote 
\[S=\bigl\{\lambda: \{1,\dots,n\}\rightarrow\{0,\dots,p-1\}\bigr\},\vspace{-1mm}\]
and consider 
\[\beta=\sum_{\lambda\in S}x_\lambda^p\prod_{i=1}^nb_i^{\lambda(i)}\] 
with $x_\lambda\in F$ for all $\lambda\in S$. Then the following are equivalent:\vspace{1mm}

\noindent\begin{minipage}{\textwidth}
\begin{enumerate}
	\item $\beta \in D_F(\varphi)$, \label{Lemma:pforms_Minimal_Repr_i}
	\item $x_\lambda=0$ for all $\lambda\in S$ such that $\sum_{i=1}^n\lambda(i)\geq2$, i.e., $\beta=y_0^p+\sum_{i=1}^nb_iy_i^p$ for some $y_i\in F$, $0\leq i \leq n$. \label{Lemma:pforms_Minimal_Repr_ii}
\end{enumerate}
\end{minipage}
\end{lemma}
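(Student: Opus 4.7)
The implication (ii) $\Rightarrow$ (i) is immediate: since $1, b_1, \ldots, b_n \in D_F(\varphi)$ and $D_F(\varphi)$ is an $F^p$-vector space, any element of the form $y_0^p + \sum_{i=1}^n b_i y_i^p$ lies in $D_F(\varphi)$.

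For the converse (i) $\Rightarrow$ (ii), the plan is to produce two $F^p$-linear expansions of $\beta$ in a common $F^p$-basis of $N_F(\varphi)$ and then invoke uniqueness. Set $\psi = \sqf{1, b_1, \ldots, b_n}$. Since $\{b_1, \ldots, b_n\}$ is $p$-independent over $F$, the set $\{1, b_1, \ldots, b_n\}$ is contained in the $F^p$-basis $\widehat{\{b_1, \ldots, b_n\}}$ of $F^p(b_1, \ldots, b_n)$ and is therefore $F^p$-linearly independent, so $\psi$ is anisotropic. Combining $\{1, b_1, \ldots, b_n\} \subseteq D_F(\varphi)$ with Lemma~\ref{Lem:p-subform}(ii) yields $\psi \subseteq \varphi$, whence $\varphi \simeq \psi \ort \sigma$ with $\sigma \simeq \sqf{d_1, \ldots, d_k}$ and $n + k = \dim \varphi - 1$.

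Then Lemma~\ref{Lemma:NormFieldBasics}(i) gives $N_F(\varphi) = F^p(b_1, \ldots, b_n, d_1, \ldots, d_k)$, while minimality of $\varphi$ yields $[N_F(\varphi):F^p] = p^{\dim \varphi - 1} = p^{n+k}$. Since any $p$-generating set of cardinality equal to the $F^p$-rank must itself be $p$-independent (it contains, hence equals, a $p$-basis), $\mB = \{b_1, \ldots, b_n, d_1, \ldots, d_k\}$ is a $p$-basis of $N_F(\varphi)$ over $F$, so $\widehat{\mB}$ is an $F^p$-basis of $N_F(\varphi)$. On one side, $\beta \in D_F(\varphi) = \spn_{F^p}\{1, b_1, \ldots, b_n, d_1, \ldots, d_k\}$ expresses $\beta$ as an $F^p$-combination of elements of $\widehat{\mB}$ supported only on the monomials of total degree $\leq 1$ in $\mB$. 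On the other side, the given formula
\[
\beta = \sum_{\lambda \in S} x_\lambda^p \prod_{i=1}^n b_i^{\lambda(i)}
\]
is an $F^p$-combination of elements of $\widehat{\mB}$ supported only on monomials purely in the $b_i$'s. By uniqueness of the $F^p$-expansion in $\widehat{\mB}$, these two expansions must coincide coefficient-by-coefficient; in particular, for every $\lambda$ with $\sum_i \lambda(i) \geq 2$, the coefficient $x_\lambda^p$ of $\prod_i b_i^{\lambda(i)}$ is forced to be $0$, and since Frobenius is injective, $x_\lambda = 0$, proving (ii).

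The main obstacle is arranging an $F^p$-basis of $N_F(\varphi)$ in which both expansions of $\beta$ can be read off simultaneously. This is exactly where minimality of $\varphi$ is indispensable: without it, the diagonal entries of $\varphi$ (once $1$ has been singled out) need not form a $p$-basis of $N_F(\varphi)$, and the higher-degree monomials in the $b_i$'s could then legitimately appear in the expansion of $\beta$ without being forced to vanish.
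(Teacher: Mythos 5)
Your proof is correct, but it takes a genuinely different route from the one in the paper. The paper argues by contraposition together with the structural lemma on minimal forms (Lemma~\ref{Lem:PropertiesOfMinimalForms_pforms}): if (ii) fails, then $\beta$ is $F^p$-linearly independent from $\{1,b_1,\dots,b_n\}$ yet lies in $F^p(b_1,\dots,b_n)$, so $\sqf{1,b_1,\dots,b_n,\beta}$ is anisotropic but $p$-dependent, hence not minimal by part (i) of that lemma; since every subform of a scalar multiple of a minimal form is minimal by part (iv), this subform cannot sit inside $\varphi$, so $\beta\notin D_F(\varphi)$. You instead extend $\{1,b_1,\dots,b_n\}$ to a full diagonalization $\sqf{1,b_1,\dots,b_n,d_1,\dots,d_k}\simeq\varphi$, use minimality once to see that $\{b_1,\dots,b_n,d_1,\dots,d_k\}$ is a $p$-basis of $N_F(\varphi)$, and then read off $\beta$'s unique expansion in the corresponding monomial $F^p$-basis $\widehat{\mB}$: membership in $D_F(\varphi)$ forces support only on degree-$\leq 1$ monomials, whereas the hypothesized formula has support only on pure $b$-monomials, so the mixed and higher-degree coefficients vanish. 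Both proofs boil down to the same uniqueness-of-expansion phenomenon, but the paper's packages it inside the already-proved properties of minimal forms (shorter once those are available), while yours unfolds the mechanism explicitly and is arguably more transparent about why minimality is the right hypothesis — your closing paragraph makes that point nicely. One small remark: your sentence ``a $p$-generating set of cardinality equal to the $F^p$-rank must itself be $p$-independent'' is using the fact that any $p$-generating set contains a $p$-basis (cited in the paper as \cite[Cor.~A.8.9]{CSAandGC}); it is worth citing that explicitly so the step does not look like ordinary linear algebra smuggled in.
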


\begin{proof}
Write $\tau\simeq\sqf{1,b_1,\dots,b_n}$. Then \ref{Lemma:pforms_Minimal_Repr_ii} holds if and only if $\beta\in D_F(\tau)$. Since $\tau\subseteq\varphi$ by the assumptions, the implication \ref{Lemma:pforms_Minimal_Repr_ii}~$\Rightarrow$~\ref{Lemma:pforms_Minimal_Repr_i} is obvious. 

To prove \ref{Lemma:pforms_Minimal_Repr_i}~$\Rightarrow$~\ref{Lemma:pforms_Minimal_Repr_ii}, suppose that \ref{Lemma:pforms_Minimal_Repr_ii} does not hold (e.g., $\beta=b_1b_2$). Then we can see that $\beta\notin D_F(\tau)$. Hence, the $p$-form $\tau\ort\sqf{\beta}$ is anisotropic. On the other hand, $\beta\in F^p(b_1,\dots,b_n)$, so $\tau\ort\sqf{\beta}$ is not minimal over $F$ by part (i) of Lemma \ref{Lem:PropertiesOfMinimalForms_pforms}. If $\beta\in D_F(\varphi)$, then $\tau\ort\sqf{\beta}\subseteq\varphi$, in which case $\varphi$ could not be minimal over $F$ by part (iv) of the same lemma, which is a contradiction; therefore, $\beta\notin D_F(\varphi)$.
\end{proof}

%------------------------------------------------
\subsection{Vishik equivalence}

In this subsection, we introduce Vishik equivalence of $p$-forms. The name originates from \emph{Vishik criterion} that, in characteristic different from $2$, gives an algebraic characterization of the motivic equivalence (see \cite{Hof15-Mot}).

\begin{definition}\label{Def:Vishik}
Let $\varphi, \psi$ be $p$-forms over $F$. We say that they are \emph{Vishik equivalent} and write $\varphi\simv\psi$ if $\dim\varphi=\dim\psi$ and the following holds:
\begin{equation}\label{Eq:DefVishik}
\iql{\varphi_E}=\iql{\psi_E} \quad \forall\;E/F. \tag{$v$}
\end{equation}
\end{definition}

\begin{remark}
For quadratic forms $\varphi$ and $\psi$ over $F$ (not necessarily totally singular) such that $\dim\varphi=\dim\psi$, we define Vishik equivalence as follows (see \cite[Def.~1.57]{KZdis}):
\[
\varphi\simv\psi \qquad \stackrel{\text{def}}{\Longleftrightarrow} \qquad \iw{\varphi_E}=\iw{\psi_E} \quad \& \quad \iql{\varphi_E}=\iql{\psi_E} \quad \forall\;E/F.
\]
Note that this definition agree with Definition~\ref{Def:Vishik} in the special case of totally singular quadratic forms.
\end{remark}

\begin{lemma} \label{Lem:VishikWrtExt} \label{Lemma:VishikAnisotropy}
Let $\varphi$, $\psi$ be $p$-forms over $F$ such that $\varphi\simv\psi$, and let $E/F$ be a~field extension. Then $(\varphi_E)_{\an}\simv(\psi_E)_{\an}$. In particular, $\varphi_{\an}\simv\psi_{\an}$.
\end{lemma}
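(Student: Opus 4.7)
The plan is to verify directly the two conditions in Definition~\ref{Def:Vishik}: that $\dim(\varphi_E)_{\an}=\dim(\psi_E)_{\an}$, and that $\iql{((\varphi_E)_{\an})_L}=\iql{((\psi_E)_{\an})_L}$ for every field extension $L/E$. The key observation is that any extension $L/E$ is automatically an extension of $F$, so the hypothesis $\varphi\simv\psi$ supplies equality of defects over $L$.

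For the dimensions, I would use the canonical decomposition $\varphi_E\simeq(\varphi_E)_{\an}\ort\iql{\varphi_E}\times\sqf{0}$ (and similarly for $\psi$) to write $\dim(\varphi_E)_{\an}=\dim\varphi-\iql{\varphi_E}$. Applying Definition~\ref{Def:Vishik} with the extension $E/F$ gives $\iql{\varphi_E}=\iql{\psi_E}$, and since $\dim\varphi=\dim\psi$ by assumption, the dimensions of the anisotropic parts coincide.

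For the defects, fix $L/E$. Extending scalars from $E$ to $L$ in the decomposition of $\varphi_E$ yields
\[
\varphi_L\;\simeq\;\bigl((\varphi_E)_{\an}\bigr)_L\ort\iql{\varphi_E}\times\sqf{0},
\]
so that $\iql{\varphi_L}=\iql{((\varphi_E)_{\an})_L}+\iql{\varphi_E}$, i.e., $\iql{((\varphi_E)_{\an})_L}=\iql{\varphi_L}-\iql{\varphi_E}$. The analogous formula holds for $\psi$. Applying $\varphi\simv\psi$ twice — with the $F$-extensions $L/F$ and $E/F$ — gives equality of both terms on the right-hand side, hence $\iql{((\varphi_E)_{\an})_L}=\iql{((\psi_E)_{\an})_L}$, as required. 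Specialising to $E=F$ yields the ``in particular'' assertion.

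There is no real obstacle here: the statement is essentially a bookkeeping exercise in the definition, using only the splitting of a $p$-form into its anisotropic part and a hyperbolic (zero) remainder, together with the transitivity of field extensions. The one point that should be made explicit is that the zero form $k\times\sqf{0}$ remains the zero form under any scalar extension, so the defect $\iql{\varphi_E}$ pulled along to $L$ contributes the same number of hyperbolic summands on both sides.
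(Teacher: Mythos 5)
Your proof is correct and follows essentially the same approach as the paper: both use the identity $\iql{((\varphi_E)_{\an})_L}=\iql{\varphi_L}-\iql{\varphi_E}$ together with the fact that any extension of $E$ is an extension of $F$. You are slightly more explicit than the paper in verifying the dimension condition, but the core argument is identical.
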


\begin{proof}
Denote $\varphi'=(\varphi_E)_{\an}$ and $\psi'=(\psi_E)_{\an}$. Let $K/E$ be another field extension. It holds that $\iql{\varphi_K}=\iql{\psi_K}$ and $\iql{\varphi_E}=\iql{\psi_E}$. Thus, 
\[\iql{\varphi'_K}=\iql{\varphi_K}-\iql{\varphi_E}=\iql{\psi_K}-\iql{\psi_E}=\iql{\psi'_K}.\] 
Therefore, $\varphi'\simv\psi'$.
\end{proof}

Since similar $p$-forms must be of the same dimension and have the same defects over any field, we get:

\begin{lemma}\label{Lemma:SimImpliesVishik}
If $\varphi\simsim\psi$, then $\varphi\simv\psi$.
\end{lemma}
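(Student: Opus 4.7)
The plan is to unpack the definition of similarity and verify the two defining conditions of Vishik equivalence essentially mechanically; the statement is quite direct and there is no real obstacle.

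By definition of similarity, $\varphi \simsim \psi$ means $\varphi \simeq c\psi$ for some $c \in F^*$. First, since isometry and scaling by a nonzero element both preserve dimension, we immediately get $\dim\varphi = \dim(c\psi) = \dim\psi$.

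Next, let $E/F$ be any field extension. Since isometry is preserved under scalar extension, $\varphi_E \simeq c\psi_E$, so $\iql{\varphi_E} = \iql{c\psi_E}$. It remains to check that $\iql{c\psi_E} = \iql{\psi_E}$; once this is established, combining with the dimension equality yields $\varphi \simv \psi$.

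To see that scaling by $c \in F^*$ preserves the defect, write $\psi_E \simeq \sigma \ort k \times \sqf{0}$ with $\sigma$ anisotropic and $k = \iql{\psi_E}$. Then $c\psi_E \simeq c\sigma \ort k \times \sqf{0}$ (using $c\sqf{0} \simeq \sqf{0}$). The form $c\sigma$ is still anisotropic: indeed, if $(c\sigma)(v) = c\,\sigma(v) = 0$ for some $v \neq 0$, then $\sigma(v) = 0$ as $c \neq 0$, contradicting anisotropy of $\sigma$. Hence $(c\psi_E)_{\an} \simeq c\sigma$, and so $\iql{c\psi_E} = k = \iql{\psi_E}$, finishing the proof.
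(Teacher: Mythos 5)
Your proof is correct and simply spells out in full the mechanical verification that the paper leaves implicit (the paper just asserts the lemma after noting that similar $p$-forms share dimension and defects over all extensions). The approach is the same; you have merely filled in the details.
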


Among others, Vishik equivalence implies that a $p$-form is isotropic over the function field of any Vishik equivalent $p$-form. 

\begin{lemma} \label{Lemma:VishikToIsotropy}
Let $\varphi$, $\psi$ be $p$-forms over $F$ of dimension at least two. If $\varphi\simv\psi$, then $\varphi_{F(\psi)}$ and $\psi_{F(\varphi)}$ are isotropic.
\end{lemma}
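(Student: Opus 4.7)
The plan is essentially to evaluate Vishik equivalence at the single field extension $E = F(\psi)$ (and symmetrically $E = F(\varphi)$), and observe that each form becomes isotropic over its own function field.

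More precisely, the function field $F(\psi)$ is, by construction, a field over which $\psi_{F(\psi)}$ has a nontrivial zero: writing $\psi \simeq \sqf{a_1,\dots,a_n}$ with $n = \dim\psi \geq 2$, the generic point of the projective hypersurface $\{a_1X_1^p+\dots+a_nX_n^p=0\}$ produces classes $x_1,\dots,x_n \in F(\psi)$, not all zero, with $\sum a_ix_i^p = 0$. Hence $\psi_{F(\psi)}$ is isotropic, which in the language of the preliminaries means $\iql{\psi_{F(\psi)}} \geq 1$ (since anisotropy is equivalent to defect zero).

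Now I would invoke the defining condition \eqref{Eq:DefVishik} of Vishik equivalence with the specific extension $E = F(\psi)/F$ to get
\[
\iql{\varphi_{F(\psi)}} \;=\; \iql{\psi_{F(\psi)}} \;\geq\; 1,
\]
so $\varphi_{F(\psi)}$ is isotropic. Swapping the roles of $\varphi$ and $\psi$ and repeating the same argument with $E = F(\varphi)$ yields that $\psi_{F(\varphi)}$ is isotropic as well.

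There is no real obstacle here; the only nontrivial ingredient is the standard fact that a nondegenerate $p$-form of dimension at least two becomes isotropic over its own function field, and once that is in hand, the rest is a one-line application of \eqref{Eq:DefVishik}. (One should just note that the hypothesis $\dim\psi \geq 2$, $\dim\varphi \geq 2$ is exactly what is needed so that $F(\psi)$ and $F(\varphi)$ are well-defined field extensions of $F$.)
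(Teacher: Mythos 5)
Your proof is correct and follows the same approach as the paper: specialize the Vishik condition \eqref{Eq:DefVishik} to $E=F(\psi)$ (resp.\ $E=F(\varphi)$), use that a form of dimension $\geq 2$ is isotropic over its own function field, and read off the isotropy of the other form.
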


\begin{proof}
Since $\varphi_{F(\varphi)}$ is isotropic and we know that $\iql{\psi_{F(\varphi)}}=\iql{\varphi_{F(\varphi)}}$, we get that $\psi_{F(\varphi)}$ is isotropic. Symmetrically, $\varphi_{F(\psi)}$ is isotropic.
\end{proof}

\begin{lemma} \label{Lemma:VishikToNF}
Let $\varphi$, $\psi$ be anisotropic $p$-forms over $F$ such that $\varphi\simv\psi$. Then $\nf_F(\varphi)\simeq\nf_F(\psi)$, $N_F(\varphi)=N_F(\psi)$ and $\ndeg_F(\varphi)=\ndeg_F(\psi)$.
\end{lemma}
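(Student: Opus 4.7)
The plan is to reduce everything to equality of the norm fields: once we know $N_F(\varphi)=N_F(\psi)$, the norm degrees agree by definition (both equal $[N_F(\varphi):F^p]$), and the norm forms agree because an anisotropic quasi-Pfister form is determined up to isometry by its norm field. Indeed, if $\nf_F(\varphi)\simeq\pf{a_1,\dots,a_n}$ and $\nf_F(\psi)\simeq\pf{c_1,\dots,c_m}$ with $\{a_i\}$ and $\{c_j\}$ two $p$-bases of the common norm field $N$, then both quasi-Pfister forms are anisotropic and satisfy $D_F=N$, so by Lemma~\ref{Lem:p-subform}(i) they share their anisotropic part and, being already anisotropic and of the same dimension, they are isometric.

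To prove $N_F(\varphi)\subseteq N_F(\psi)$ I would appeal to Proposition~\ref{Prop:IsotropyAndNormField}, taking $E=N_F(\psi)$. The condition to check is: for every $a\in F^*$ with $\iql{\varphi_{F(\sqrt[p]{a})}}>0$, one has $a\in N_F(\psi)$. If $a\in F^p$ then $F(\sqrt[p]{a})=F$ and $\iql{\varphi_{F(\sqrt[p]{a})}}=0$ by anisotropy of $\varphi$, so the hypothesis is vacuous. If $a\notin F^p$, then Vishik equivalence gives $\iql{\psi_{F(\sqrt[p]{a})}}=\iql{\varphi_{F(\sqrt[p]{a})}}>0$; since $\psi$ is anisotropic over $F$, this forces $\psi_{F(\sqrt[p]{a})}$ to be isotropic. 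Lemma~\ref{Lemma:p-forms_BasicIsotropy}\ref{Lemma:p-forms_BasicIsotropy_Part_NormField} then yields $a\in N_F(\psi)$, as required.

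By the symmetric argument, $N_F(\psi)\subseteq N_F(\varphi)$, so $N_F(\varphi)=N_F(\psi)$. Combining this with the opening paragraph completes the proof.

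There is no real obstacle here: the work is entirely done by Proposition~\ref{Prop:IsotropyAndNormField} (which characterises the norm field as the minimal field extension of $F^p$ detecting defect-producing purely inseparable degree-$p$ extensions) together with the transport of this detection property across $\simv$. The only delicate point is to remember to split off the trivial case $a\in F^p$ before invoking Lemma~\ref{Lemma:p-forms_BasicIsotropy}\ref{Lemma:p-forms_BasicIsotropy_Part_NormField}, whose hypothesis requires $a\in F\setminus F^p$.
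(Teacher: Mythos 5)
Your proof is correct, but it takes a different route from the paper's proof of this particular lemma. The paper first observes (Lemma~\ref{Lemma:VishikToIsotropy}) that $\varphi$ and $\psi$ each become isotropic over the function field of the other, and then quotes \cite[Lemma~7.12]{Hof04}, which deduces equality of norm fields from mutual function-field isotropy. You instead test isotropy only over the simple inseparable extensions $F(\sqrt[p]{a})$ and invoke Proposition~\ref{Prop:IsotropyAndNormField} together with Lemma~\ref{Lemma:p-forms_BasicIsotropy}\ref{Lemma:p-forms_BasicIsotropy_Part_NormField}. Both work, but your argument is strictly stronger: it only uses the defect equality over degree-$p$ purely inseparable extensions, i.e., weak Vishik equivalence, not the full strength of $\simv$. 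In fact, what you wrote is essentially the paper's own proof of Proposition~\ref{Prop:WeakVishikSameNormField}, which establishes the same conclusion under the weaker hypothesis $\varphi\simvw\psi$; Lemma~\ref{Lemma:VishikToNF} then follows from that proposition via Lemma~\ref{Lemma:WeakVishikBasicProperties}(iii). The only reason the paper does not prove Lemma~\ref{Lemma:VishikToNF} this way is ordering: it is stated in Section~\ref{Sec:Prel}, before weak Vishik equivalence and Proposition~\ref{Prop:IsotropyAndNormField}'s use in that context are introduced. Your explicit handling of the case $a\in F^p$ (where the premise is vacuous) is a nice piece of care that the paper leaves implicit.
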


\begin{proof}
The claim is trivial if $\dim\varphi=\dim\psi=1$. Assume that the dimension is at least two. By Lemma~\ref{Lemma:VishikToIsotropy}, we know that $\varphi_{F(\psi)}$ and $\psi_{F(\varphi)}$ are isotropic. Thus, this is a consequence of \cite[Lemma 7.12]{Hof04}.
\end{proof}

%=====================================================================================================
\section{Vishik equivalence on $p$-forms} \label{Sec:Vishik_p-forms}

As an easy consequence of some know results, we get that Question~\ref{MQ} has a positive answer for quasi-Pfister forms and quasi-Pfister neighbors of codimension one.

\begin{proposition} \label{Prop:Vishik_PF}
Let $\varphi$, $\psi$ be anisotropic $p$-forms such that $\varphi\simv\psi$. If $\varphi$ is a quasi-Pfister form or a quasi-Pfister neighbor of codimension one, then $\varphi\simsim\psi$. 
\end{proposition}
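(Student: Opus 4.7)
The plan is to apply Lemma~\ref{Lemma:VishikToNF} to obtain $\nf_F(\varphi)\simeq\nf_F(\psi)$ and $\dim\varphi=\dim\psi$, and then split into the two cases of the hypothesis.

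In Case~1, $\varphi$ is a quasi-Pfister form, so $\varphi\simeq\nf_F(\varphi)\simeq\nf_F(\psi)$. Since $\nf_F(\psi)$ contains a scalar multiple of the anisotropic form $\psi$ as a subform, and the two have the same dimension, that scalar multiple is all of $\nf_F(\psi)$, giving $\psi\simsim\nf_F(\psi)\simeq\varphi$.

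In Case~2, $\varphi$ is a quasi-Pfister neighbor of codimension one; set $\pi:=\nf_F(\varphi)\simeq\nf_F(\psi)$. Writing $\pi\simeq c\varphi\ort\sqf{e}$ and $\pi\simeq d\psi\ort\sqf{f}$ exhibits $c\varphi$ and $d\psi$ as anisotropic codimension-one subforms of the quasi-Pfister form $\pi$, so it suffices to prove that any two such subforms of $\pi$ are similar. The field $K:=D_F(\pi)=N_F(\pi)$ has $F^p$-dimension $\dim\pi$, and any anisotropic codimension-one subform of $\pi$ is determined up to isometry by its value set, which is a codimension-one $F^p$-subspace of $K$ (by Lemma~\ref{Lem:p-subform}(i)). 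Hence the claim reduces to showing that the multiplicative action of $K^*$ on codimension-one $F^p$-subspaces of $K$ is transitive. This transitivity follows from the standard duality fact that $\mathrm{Hom}_{F^p}(K,F^p)$ is a free $K$-module of rank one: for any nonzero functional $\phi_0$, the $K$-linear map $\lambda\mapsto\bigl(x\mapsto\phi_0(\lambda x)\bigr)$ is injective (because $K$ is a field), hence an isomorphism by $F^p$-dimension count, so $K^*$ acts transitively on nonzero functionals and, modding out by $F^{p\,*}$, on their kernels. Consequently, some $\lambda\in K^*$ satisfies $\lambda D_F(c\varphi)=D_F(d\psi)$; since $\lambda c\varphi$ and $d\psi$ are anisotropic of equal dimension with equal value sets, they are isometric, and we conclude $\varphi\simsim\psi$.

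The main obstacle is the transitivity assertion, a short duality argument for finite purely inseparable extensions — note in particular that naive Witt-style cancellation of $\sqf{f}$ from $(f/e)c\varphi\ort\sqf{f}\simeq d\psi\ort\sqf{f}$ is not available for $p$-forms, which is why the subspace-theoretic route through $K^\vee$ is needed. The remaining steps are direct manipulations with the norm form, dimensions, and value sets.
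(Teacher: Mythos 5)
Your proof is correct, and it takes a genuinely different route from the paper in the codimension-one case. Case~1 (quasi-Pfister form) is the same as the paper's argument. For Case~2, after reducing to the claim that any two anisotropic codimension-one subforms of the quasi-Pfister form $\pi\simeq\nf_F(\varphi)\simeq\nf_F(\psi)$ are similar, the paper simply cites \cite[Prop.~4.15]{Hof04}, whereas you give a self-contained proof of this fact. Your argument identifies anisotropic codimension-one subforms of $\pi$ (up to isometry) with codimension-one $F^p$-subspaces of the field $K=D_F(\pi)=N_F(\pi)$ via Lemma~\ref{Lem:p-subform}, and then establishes transitivity of the $K^*$-action on such subspaces by the standard fact that $\mathrm{Hom}_{F^p}(K,F^p)$ is free of rank one over $K$ (the map $\lambda\mapsto(x\mapsto\phi_0(\lambda x))$ being injective since $K$ is a field, hence bijective by dimension count). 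Your remark that the naive cancellation of $\sqf{f}$ is unavailable for quasilinear $p$-forms is exactly the point that makes this nontrivial: Witt cancellation fails here, so one cannot argue as in the nondegenerate characteristic-$\neq2$ setting. The trade-off is that the paper's citation is shorter, while your duality argument makes the proposition self-contained and exposes the underlying linear-algebraic reason — essentially reproving the relevant special case of Hoffmann's result from first principles. One small point worth making explicit: since $\lambda\in K^*=D_F^*(\pi)\subseteq F^*$, the rescaled form $\lambda c\varphi$ is indeed a $p$-form over $F$, so the final appeal to Lemma~\ref{Lem:p-subform}(i) (equal value sets plus anisotropy plus equal dimension imply isometry) is legitimate.
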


\begin{proof}
By Lemma~\ref{Lemma:VishikToNF}, there exists a quasi-Pfister form $\pi$ over $F$ such that $\pi\simeq\nf_F(\varphi)\simeq\nf_F(\psi)$.

If $\varphi$ is a quasi-Pfister form, then we can find $c,d\in F^*$ such that $c\varphi\simeq\pi\supseteq d\psi$; thus, $\varphi\simsim\psi$ for dimension reasons. 

Now assume that $\varphi$ is a quasi-Pfister neighbor of  $\pi$ of codimension $1$. Then we have $\dim\psi=\dim\varphi=\dim\pi-1$. Moreover, $c\psi\subseteq\nf_F(\psi)\simeq\pi$ for some ${c\in F^*}$. Therefore, $\psi$ is a quasi-Pfister neighbor of $\pi$ of codimension $1$. Using \cite[Prop.~4.15]{Hof04}, any two such $p$-forms are similar, i.e., $\varphi\simsim\psi$ as claimed.
\end{proof}

%-------------------------------------------------------------------------------
\subsection{Weak Vishik equivalence}

Proving that two $p$-forms are Vishik equivalent might be difficult. Moreover, we do not always need the full strength of the Vishik equivalence; therefore, we define a weaker version.

\begin{definition}
Let $\varphi$, $\psi$ be $p$-forms over $F$. We say that they are \emph{weakly Vishik equivalent} and write $\varphi\simvw\psi$ if $\dim\varphi=\dim\psi$ and the following holds:
\begin{equation}
\iql{\varphi_{F(\sqrt[p]{a})}}=\iql{\psi_{F(\sqrt[p]{a})}} \qquad \forall\;a\in F. \tag{$v_0$}
\end{equation}
\end{definition}

The weak Vishik equivalence has three (rather obvious) properties.

\begin{lemma} \label{Lemma:WeakVishikBasicProperties}
Let $\varphi$, $\psi$ be $p$-forms over $F$. Then:
\begin{enumerate}
	\item If $\varphi\simvw\psi$, then  $\iql{\varphi}=\iql{\psi}$.
	\item $\varphi\simvw\psi$ if and only if $\varphi_{\an}\simvw\psi_{\an}$.
	\item If $\varphi\simv\psi$, then $\varphi\simvw\psi$.
\end{enumerate}
\end{lemma}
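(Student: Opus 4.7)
All three parts are routine manipulations of the defining condition of weak Vishik equivalence and there is no substantial obstacle. For (i), I specialise the condition to $a=1\in F^p$, for which $\sqrt[p]{a}\in F$ and hence $F(\sqrt[p]{a})=F$; the defect equality at this value of $a$ then collapses to $\iql{\varphi}=\iql{\psi}$. (Any other $a\in F^p$ would do equally well.) For (iii), both $\simv$ and $\simvw$ impose $\dim\varphi=\dim\psi$, and the defect equality demanded by $\simv$ over \emph{every} field extension $E/F$ is in particular satisfied over the subfamily of extensions of the form $F(\sqrt[p]{a})$ with $a\in F$; so the implication is immediate from the definitions.

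Part (ii) is the only one meriting a word. The key tool is the identity
$$\iql{\varphi_E}=\iql{(\varphi_{\an})_E}+\iql{\varphi},$$
valid for any extension $E/F$ and immediate from the decomposition $\varphi\simeq\varphi_{\an}\ort\iql{\varphi}\times\sqf{0}$ (and likewise for $\psi$). In the forward direction, part (i) gives $\iql{\varphi}=\iql{\psi}$, hence $\dim\varphi_{\an}=\dim\psi_{\an}$, and subtracting this common defect from the hypothesis at $E=F(\sqrt[p]{a})$ transports the defect equality to the anisotropic parts, yielding $\varphi_{\an}\simvw\psi_{\an}$. Conversely --- interpreting ``$\varphi\simvw\psi$'' as including the requirement $\dim\varphi=\dim\psi$ built into the definition of $\simvw$ --- combining $\dim\varphi=\dim\psi$ with $\dim\varphi_{\an}=\dim\psi_{\an}$ forces $\iql{\varphi}=\iql{\psi}$, and then adding this value back to the defect equality over $F(\sqrt[p]{a})$ for the anisotropic parts recovers the condition for $\varphi,\psi$. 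The whole argument is thus just bookkeeping with defects, and there is no genuine difficulty.
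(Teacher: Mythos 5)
Your proof is correct and follows essentially the same route as the paper's (which is a one-liner): specialize at $a=1$ for (i), observe that the fields $F(\sqrt[p]{a})$ form a subfamily of all extensions $E/F$ for (iii), and transfer the defect condition between $\varphi$ and $\varphi_{\an}$ for (ii) via the identity $\iql{\varphi_E}=\iql{(\varphi_{\an})_E}+\iql{\varphi}$, which you state explicitly and the paper leaves implicit.

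One point you flag in passing deserves to be stated cleanly: the converse direction of (ii), read literally, is false without an extra hypothesis. If $\varphi$ is anisotropic and $\psi\simeq\varphi\ort\sqf{0}$, then $\varphi_{\an}\simeq\psi_{\an}$ so $\varphi_{\an}\simvw\psi_{\an}$ trivially, yet $\dim\varphi\neq\dim\psi$ so $\varphi\not\simvw\psi$. Your parenthetical "interpreting $\varphi\simvw\psi$ as including $\dim\varphi=\dim\psi$" is trying to smuggle in the needed assumption, but that dimension equality is part of what one is asked to \emph{prove} in the converse, not a given. The correct reading is that the converse holds under the additional assumption $\dim\varphi=\dim\psi$ (equivalently $\iql{\varphi}=\iql{\psi}$), and with that in hand your bookkeeping argument works exactly as written. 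The paper's phrase "an easy consequence of part (i)" glosses over the same gap; in the rest of the paper only the forward direction is ever invoked, so nothing downstream is affected.
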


\begin{proof}
Part (i) follows directly from the definition, because we include the equality of the defects over the field $F(\sqrt[p]{1})\simeq F$. 
Part (ii) is then an easy consequence of part (i). Finally, part (iii) is trivial.
\end{proof}

To prove that Vishik equivalent forms have the same norm field, we used the isotropy over the function fields of each other (see Lemma~\ref{Lemma:VishikToNF}). Another possibility to determine the norm field is to use Proposition~\ref{Prop:p-forms_NormfieldCharacterisation}, which only needs one particular purely inseparable field extension of exponent one. But for weakly Vishik equivalent forms, neither of these is at our disposal; nevertheless, thanks to Proposition~\ref{Prop:IsotropyAndNormField}, we are still able to prove that they have the same norm field. 

\begin{proposition} \label{Prop:WeakVishikSameNormField}
Let $\varphi$, $\psi$ be $p$-forms over $F$ such that $\varphi\simvw\psi$. Then $\nf_F(\varphi)\simeq\nf_F(\psi)$, $N_F(\varphi)=N_F(\psi)$ and $\ndeg_F(\varphi)=\ndeg_F(\psi)$.
\end{proposition}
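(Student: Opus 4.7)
My plan is to reduce to the anisotropic case and then apply Proposition~\ref{Prop:IsotropyAndNormField} in each direction, using Lemma~\ref{Lemma:p-forms_BasicIsotropy}\ref{Lemma:p-forms_BasicIsotropy_Part_NormField} to bridge between isotropy after adjoining a $p$-th root and membership in the norm field. By Lemma~\ref{Lemma:WeakVishikBasicProperties}(ii), $\varphi_{\an}\simvw\psi_{\an}$; and since $N_F(\tau)=N_F(\tau_{\an})$, $\nf_F(\tau)\simeq\nf_F(\tau_{\an})$, and $\ndeg_F\tau=\ndeg_F\tau_{\an}$ for any $p$-form $\tau$, we may assume throughout that $\varphi$ and $\psi$ are anisotropic.

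The main step is to prove $N_F(\varphi)\subseteq N_F(\psi)$. I set $E=N_F(\psi)$; then $E$ is a field extension of $F^p$. I want to verify the hypothesis \eqref{Eq:NecCondIsotropypRoot} of Proposition~\ref{Prop:IsotropyAndNormField} for $\varphi$ and this $E$. Fix $a\in F^*$ and assume $\iql{\varphi_{F(\sqrt[p]{a})}}>0$; by weak Vishik equivalence this gives $\iql{\psi_{F(\sqrt[p]{a})}}>0$. If $a\in F^p$, then $F(\sqrt[p]{a})=F$ and $a\in F^p\subseteq N_F(\psi)=E$ automatically. Otherwise, $F(\sqrt[p]{a})/F$ is a nontrivial purely inseparable extension and, since $\psi$ is anisotropic over $F$, the inequality $\iql{\psi_{F(\sqrt[p]{a})}}>0$ means precisely that $\psi_{F(\sqrt[p]{a})}$ is isotropic. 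Lemma~\ref{Lemma:p-forms_BasicIsotropy}\ref{Lemma:p-forms_BasicIsotropy_Part_NormField} then yields $a\in N_F(\psi)=E$. Thus the hypothesis of Proposition~\ref{Prop:IsotropyAndNormField} is satisfied, and we conclude $N_F(\varphi)\subseteq N_F(\psi)$.

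The roles of $\varphi$ and $\psi$ are completely symmetric (weak Vishik equivalence is obviously symmetric), so exchanging them gives $N_F(\psi)\subseteq N_F(\varphi)$, hence $N_F(\varphi)=N_F(\psi)$. The equality of norm degrees is immediate, since $\ndeg_F\tau=[N_F(\tau):F^p]$. For the norm forms, both $\nf_F(\varphi)$ and $\nf_F(\psi)$ are by definition anisotropic quasi-Pfister forms built from a $p$-basis of the common field $N_F(\varphi)=N_F(\psi)$ over $F^p$; any two such $p$-bases have the same cardinality and give rise to isometric quasi-Pfister forms (their Pfister form being determined up to isometry by the norm field, cf.\ the discussion preceding Lemma~\ref{Lemma:PrereqForNormForm}), so $\nf_F(\varphi)\simeq\nf_F(\psi)$.

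The only delicate point is the passage from $\iql{\psi_{F(\sqrt[p]{a})}}>0$ to isotropy of $\psi_{F(\sqrt[p]{a})}$, which requires the anisotropy of $\psi$ over $F$ — this is precisely why I needed to reduce to the anisotropic case at the beginning. Everything else is a routine application of the tools already set up, in particular Proposition~\ref{Prop:IsotropyAndNormField}, whose very purpose is to identify the norm field from purely inseparable degree-$p$ isotropy data.
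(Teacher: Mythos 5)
Your proof is correct and follows essentially the same route as the paper: reduce to the anisotropic case via Lemma~\ref{Lemma:WeakVishikBasicProperties}, then use Lemma~\ref{Lemma:p-forms_BasicIsotropy}\ref{Lemma:p-forms_BasicIsotropy_Part_NormField} to verify hypothesis~\eqref{Eq:NecCondIsotropypRoot} of Proposition~\ref{Prop:IsotropyAndNormField} with $E=N_F(\psi)$, and conclude by symmetry. The only (harmless) difference is that you explicitly dispose of the trivial case $a\in F^p$ before applying the lemma, which the paper leaves implicit.
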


\begin{proof}
We prove only $N_F(\varphi)=N_F(\psi)$, then the rest follows. Invoking Lemma~\ref{Lemma:WeakVishikBasicProperties} and recalling that the norm field takes into account only the anisotropic part of a $p$-form, we can assume that $\varphi$ and $\psi$ are anisotropic. By the definition of the weak Vishik equivalence and by part \ref{Lemma:p-forms_BasicIsotropy_Part_NormField} of Lemma~\ref{Lemma:p-forms_BasicIsotropy}, we have for any $a\in F$:
\[\iql{\varphi_{F(\sqrt[p]{a})}}>0 \quad \Longrightarrow \quad \iql{\psi_{F(\sqrt[p]{a})}}>0 \quad \Longrightarrow \quad a\in N_F(\psi).\]
Thus, we can apply Proposition~\ref{Prop:IsotropyAndNormField} on the $p$-form $\varphi$ and the field $E=N_F(\psi)$; we obtain $N_F(\varphi)\subseteq N_F(\psi)$. By the symmetry of the argument, we get $N_F(\varphi)=N_F(\psi)$.
\end{proof}

Since (weakly) Vishik equivalent forms are always of the same dimension, we immediately get:

\begin{corollary} \label{Cor:WeakVishikAndMinimality}
Let $\varphi$, $\psi$ be $p$-forms over $F$ such that $\varphi\simvw\psi$. 
\begin{enumerate}
	\item If $\varphi$ is minimal over $F$, then $\psi$ is also minimal over $F$.
	\item If $\varphi$ is a quasi-Pfister form or a quasi-Pfister neighbor of codimension one over $F$, then $\varphi\simsim\psi$.
\end{enumerate}
\end{corollary}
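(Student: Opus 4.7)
The plan is to deduce both parts directly from the combination of Proposition~\ref{Prop:WeakVishikSameNormField} and the dimension/defect equalities built into the definition of $\simvw$. From $\varphi\simvw\psi$ together with Lemma~\ref{Lemma:WeakVishikBasicProperties}(i) and Proposition~\ref{Prop:WeakVishikSameNormField}, four invariants transfer from $\varphi$ to $\psi$: the dimension $\dim\varphi=\dim\psi$, the defect $\iql{\varphi}=\iql{\psi}$ (hence anisotropy is preserved), the norm degree $\ndeg_F\varphi=\ndeg_F\psi$, and the norm form $\nf_F(\varphi)\simeq\nf_F(\psi)$. These four items will carry the proof; no isotropy over function fields is needed, unlike in Lemma~\ref{Lemma:VishikToNF}.

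For part (i), minimality of an anisotropic $\varphi$ is encoded by the single equation $\ndeg_F\varphi=p^{\dim\varphi-1}$. Since $\varphi$ is anisotropic we have $\iql{\varphi}=0$, hence $\iql{\psi}=0$ by the first transfer above, so $\psi$ is anisotropic. Substituting $\dim\psi=\dim\varphi$ and $\ndeg_F\psi=\ndeg_F\varphi$ then gives $\ndeg_F\psi=p^{\dim\psi-1}$, which is minimality of $\psi$ by definition.

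For part (ii), the plan is to mirror the proof of Proposition~\ref{Prop:Vishik_PF}, with Proposition~\ref{Prop:WeakVishikSameNormField} taking the place of Lemma~\ref{Lemma:VishikToNF}. After a quick reduction to the anisotropic case (using Lemma~\ref{Lemma:WeakVishikBasicProperties}(ii) to pass to anisotropic parts, and observing that equal defects together with similarity of the anisotropic parts yields similarity of the original forms), set $\pi\simeq\nf_F(\varphi)\simeq\nf_F(\psi)$. If $\varphi$ is a quasi-Pfister form, then scalar multiples give $c\varphi\simeq\pi\supseteq d\psi$ for suitable $c,d\in F^*$, and $\dim\varphi=\dim\psi$ forces $\varphi\simsim\psi$. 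If $\varphi$ is a quasi-Pfister neighbor of $\pi$ of codimension one, then $\dim\psi=\dim\varphi=\dim\pi-1$ and $c'\psi\subseteq\nf_F(\psi)\simeq\pi$ for some $c'\in F^*$, showing that $\psi$ is also a quasi-Pfister neighbor of $\pi$ of codimension one; invoking \cite[Prop.~4.15]{Hof04} then yields $\varphi\simsim\psi$.

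There is no genuine obstacle here; the result is essentially a repackaging of Proposition~\ref{Prop:WeakVishikSameNormField} together with already-proved similarity results, which is why the author presents it as an immediate corollary. The only small bookkeeping point is the anisotropic reduction in (ii), which is handled transparently by Lemma~\ref{Lemma:WeakVishikBasicProperties}.
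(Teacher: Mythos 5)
Your proof is correct and follows essentially the same route as the paper: appeal to Proposition~\ref{Prop:WeakVishikSameNormField} to transfer the norm degree and norm form, and then rerun the argument of Proposition~\ref{Prop:Vishik_PF}. The only difference is that you spell out a couple of steps the paper leaves implicit, namely that the anisotropy of $\psi$ in part (i) follows from $\iql{\varphi}=\iql{\psi}$, and the reduction to anisotropic parts in part (ii) via Lemma~\ref{Lemma:WeakVishikBasicProperties}(ii); this is sound and matches the paper's intent.
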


\begin{proof}
By Proposition~\ref{Prop:WeakVishikSameNormField}, we have $\ndeg_F\varphi=\ndeg_F\psi$; since $\dim\varphi=\dim\psi$, part (i) follows immediately. To prove part (ii), note that we also have $\nf_F(\varphi)\simeq\nf_F(\psi)$, and so we can proceed as in the proof of Proposition~\ref{Prop:Vishik_PF}.
\end{proof}

%---------------------------------------------------------------------
\subsection{Similarity factors}

In this subsection, we will show that weak Vishik equivalence (and hence Vishik equivalence as well) preserves divisibility by \mbox{quasi-Pfister} forms. In particular, we will prove that if $\varphi\simvw\psi$ for some $p$-forms $\varphi$, $\psi$ with $\varphi$ divisible by a quasi-Pfister form $\pi$, then $\psi$ is divisible by $\pi$, too.

\bigskip

Let $\varphi$ be a $p$-form defined over $F$. Recall that we write
\[G_F(\varphi)=\{x\in F^*~|~x\varphi\simeq\varphi\}\cup\{0\};\]
we call the nonzero elements of this set the \emph{similarity factors} of $\varphi$. As observed in \cite[Prop. 6.4]{Hof04}, the set $G_F(\varphi)$ together with the usual operations is a finite field extension of $F^p$ inside $N_F(\varphi)$; in particular, there exists a $p$-independent set $\{a_1,\dots,a_m\}\subseteq F^*$ such that $G_F(\varphi)=F^p(a_1,\dots,a_m)$. We denote $\simf_F(\varphi)\simeq\pf{a_1,\dots,a_m}$ and call it the \emph{similarity form} of $\varphi$ over $F$. Moreover, again by \cite[Prop.~6.4]{Hof04}, there exists a $p$-form $\gamma$ over $F$ such that $\varphi_{\an}\simeq\simf_F(\varphi)\otimes\gamma$.  It holds that $D_F(\simf_F(\varphi))=G_F(\varphi)$.

We will show that weak Vishik equivalent $p$-forms have the same similarity factors. But first, we need a simple lemma.

\begin{lemma} \label{Lem:PFdivisibility}
Let $\pi_1$, $\pi_2$ be anisotropic quasi-Pfister forms. Then $\pi_1\subseteq\pi_2$ if and only if $\pi_2\simeq\pi_1\otimes\gamma$ for some $p$-form $\gamma$ over $F$. In that case, $\gamma$ can be chosen to be a quasi-Pfister form.
\end{lemma}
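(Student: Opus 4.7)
The plan is to prove the two implications separately, leveraging the fact that anisotropic quasi-Pfister forms are determined, up to isometry, by their norm fields.

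For the ``if'' direction, I would assume $\pi_2\simeq\pi_1\otimes\gamma$ and pick any $c\in D_F^*(\gamma)$ (such $c$ exists because $\pi_2\neq 0$ forces $\gamma\neq 0$). Then $c\pi_1\simeq\pi_1\otimes\sqf{c}\subseteq\pi_1\otimes\gamma\simeq\pi_2$, so $c$ is represented by the anisotropic quasi-Pfister form $\pi_2$. By Lemma~\ref{Lemma:PFandPN}(i), $c\in D_F^*(\pi_2)=G_F^*(\pi_2)$; since $G_F^*(\pi_2)$ is a group, also $c^{-1}\in G_F^*(\pi_2)$, whence $c^{-1}\pi_2\simeq\pi_2$ and therefore $\pi_1\subseteq c^{-1}\pi_2\simeq\pi_2$.

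For the converse, I would write $\pi_1\simeq\pf{a_1,\dots,a_m}$, so by anisotropy $\{a_1,\dots,a_m\}$ is $p$-independent over $F$ and $N_F(\pi_1)=F^p(a_1,\dots,a_m)$ by Lemma~\ref{Lemma:NormFieldBasics}(i). The inclusion $\pi_1\subseteq\pi_2$ yields $N_F(\pi_1)\subseteq N_F(\pi_2)$ (by the remarks preceding Lemma~\ref{Lemma:NormFieldBasics}). Using the extension property of $p$-bases recorded in Subsection~\ref{Subsec:p-bases}, I would enlarge the $p$-independent set $\{a_1,\dots,a_m\}$ to a $p$-basis $\{a_1,\dots,a_m,c_1,\dots,c_k\}$ of $N_F(\pi_2)$ over $F^p$, and consider the quasi-Pfister form $\sigma\simeq\pf{a_1,\dots,a_m,c_1,\dots,c_k}$. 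This $\sigma$ is anisotropic (its defining slots form a $p$-independent set) and, by construction, $N_F(\sigma)=F^p(a_1,\dots,a_m,c_1,\dots,c_k)=N_F(\pi_2)$. Concluding $\sigma\simeq\pi_2$ then gives $\pi_2\simeq\pi_1\otimes\pf{c_1,\dots,c_k}$, so $\gamma:=\pf{c_1,\dots,c_k}$ is the required quasi-Pfister divisor.

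The step I expect to be the main obstacle is the rigidity claim that any two anisotropic quasi-Pfister forms with the same norm field are isometric. I would establish this as a small auxiliary observation: for an anisotropic quasi-Pfister form $\pi\simeq\pf{b_1,\dots,b_n}$, the set $D_F(\pi)$ contains $1$ and each $b_i$ and is visibly closed under the products coming from the Pfister description, so $D_F(\pi)$ equals the field $F^p(b_1,\dots,b_n)=N_F(\pi)$. Lemma~\ref{Lem:p-subform}(i) then recovers $\pi\simeq\pi_{\an}$ from any $F^p$-basis of $D_F(\pi)$, making the isometry class of $\pi$ depend only on $N_F(\pi)$. Once this lemma is in hand, the two paragraphs above immediately combine to give the claim.
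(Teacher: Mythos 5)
Your proof is correct and follows essentially the same strategy as the paper: derive $N_F(\pi_1)\subseteq N_F(\pi_2)$, extend the slots of $\pi_1$ to a $p$-basis of $N_F(\pi_2)$, and conclude via the rigidity of anisotropic quasi-Pfister forms with respect to their norm fields; for the converse, scale so that $\gamma$ represents $1$ and use $D_F^*(\pi_2)=G_F^*(\pi_2)$. The only cosmetic difference is that where the paper cites \cite[Prop.~4.6]{Hof04} for the bijection between quasi-Pfister forms and finite extensions of $F^p$, you unpack that fact into a short direct computation showing $D_F(\pi)=N_F(\pi)$, which is valid.
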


\begin{proof}
Write $\pi_1\simeq\pf{a_1,\dots,a_r}$ and $\pi_2\simeq\pf{b_1,\dots,b_s}$, which means that we have $N_F(\pi_1)=F^p(a_1,\dots,a_r)$ and $N_F(\pi_2)=F^p(b_1,\dots,b_s)$. By \cite[Prop.~4.6]{Hof04}, there is a bijection between finite field extensions of $F^p$ inside $F$ and Pfister $p$-forms over $F$, which implies that $\pi_1\subseteq\pi_2$ if and only if $N_F(\pi_1)\subseteq N_F(\pi_2)$. 

If this holds, then we can extend $\{a_1,\dots,a_r\}$ to a {$p$-basis} of $F^p(b_1,\dots,b_s)$; thus, $F^p(b_1,\dots,b_s)=F^p(a_1,\dots,a_r,a_{r+1},\dots,a_s)$ for some $a_{r+1},\dots,a_s\in F^*$, and so $\pf{b_1,\dots,b_s}\simeq\pf{a_1,\dots,a_r}\otimes\pf{a_{r+1},\dots,a_s}$.

On the other hand, assume $\pi_2\simeq\pi_1\otimes\gamma$ for a $p$-form $\gamma$. If $1\in D_F^*(\gamma)$, then we can write $\gamma\simeq\sqf{1}\ort\gamma'$ for a suitable $p$-form $\gamma'$; in that case, $\pi_2\simeq\pi_1\ort\pi_1\otimes\gamma'$ and we are done. More generally, if $c\in D_F^*(\gamma)$, then $c\in D_F^*(\pi_2)=G_F^*(\pi_2)$. Now $\pi_2\simeq c\pi_2\simeq\pi_1\otimes c\gamma$ with $1\in D_F^*(c\gamma)$, so we are done by the previous case.
\end{proof}

\begin{theorem} \label{Thm:SimilarityFactorsVishik_pforms}
Let $\varphi$, $\psi$ be anisotropic $p$-forms over $F$ such that $\varphi\simvw\psi$. Then $G_F(\varphi)=G_F(\psi)$.
\end{theorem}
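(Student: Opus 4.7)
The plan is to reduce the statement $x \in G_F^*(\varphi)$ to a numerical condition on the defect of $\varphi$ over a suitable purely inseparable extension, and then exploit the hypothesis $\varphi\simvw\psi$ directly.

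First, since $F^{*p}$ is contained in $G_F^*$ of every $p$-form, I only need to treat $x \in F^*\setminus F^{*p}$. For such $x$, I will establish the following characterization: if $\varphi$ is anisotropic of dimension $n$, then
\[
x \in G_F^*(\varphi) \quad \Longleftrightarrow \quad \iql{\pf{x}\otimes\varphi} = (p-1)n.
\]
The forward direction is immediate: if $x\varphi \simeq \varphi$, then $\pf{x}\otimes\varphi \simeq \varphi \ort x\varphi \ort\cdots\ort x^{p-1}\varphi \simeq p\times\varphi$, and one computes (using $aX_1^p+\cdots+aX_p^p = a(X_1+\cdots+X_p)^p$ in characteristic $p$) that $(p\times\varphi)_{\an}\simeq\varphi$ with defect $(p-1)n$. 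For the converse, the equality $\iql{\pf{x}\otimes\varphi} = (p-1)n$ forces $\dim(\pf{x}\otimes\varphi)_{\an}=n$; since $\varphi$ and $x\varphi$ are both anisotropic subforms of $\pf{x}\otimes\varphi$ of dimension $n$, both must be isometric to $(\pf{x}\otimes\varphi)_{\an}$, so $x\varphi\simeq\varphi$.

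Next, by Lemma~\ref{Lemma:p-forms_BasicIsotropy}(ii) we have $\iql{\pf{x}\otimes\varphi} = p\cdot\iql{\varphi_{F(\sqrt[p]{x})}}$, so the characterization becomes
\[
x \in G_F^*(\varphi) \quad \Longleftrightarrow \quad \iql{\varphi_{F(\sqrt[p]{x})}} = \tfrac{p-1}{p}\dim\varphi.
\]
Since $\varphi\simvw\psi$ gives $\dim\varphi=\dim\psi$ and $\iql{\varphi_{F(\sqrt[p]{x})}} = \iql{\psi_{F(\sqrt[p]{x})}}$, the right-hand side is symmetric in $\varphi$ and $\psi$. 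Thus $x\in G_F^*(\varphi)$ iff $x\in G_F^*(\psi)$, and combined with $F^{*p}\cup\{0\}\subseteq G_F(\varphi)\cap G_F(\psi)$, this yields $G_F(\varphi)=G_F(\psi)$.

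The main technical point, and the place where a careful argument is needed, is the converse half of the characterization above -- the subform-and-dimension comparison inside $\pf{x}\otimes\varphi$. Once that is in hand, the rest of the proof is a direct translation via Lemma~\ref{Lemma:p-forms_BasicIsotropy}(ii), and the weak Vishik hypothesis \eqref{Eq:DefVishik} (restricted to degree-$p$ purely inseparable extensions) does all the remaining work.
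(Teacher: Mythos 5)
Your proof is correct, and it takes a slightly different and in fact cleaner route than the paper's. The paper argues asymmetrically: for the direction $a\in G_F^*(\varphi)\Rightarrow a\in G_F^*(\psi)$ it first invokes the similarity form $\simf_F(\varphi)$ and the divisibility lemma (Lemma~\ref{Lem:PFdivisibility}) to show $\pf{a}$ divides $\varphi$ and hence that $\iql{\varphi_{F(\sqrt[p]{a})}}$ is maximal; then, on the $\psi$ side, it takes a form $\psi'$ over $F$ with $\psi'_E\simeq(\psi_E)_{\an}$ and compares $D$-sets to force $\psi\simeq\pf{a}\otimes\psi'$. You instead establish a single symmetric characterization: for anisotropic $\varphi$ and $x\in F^*\setminus F^{*p}$,
\[
x\in G_F^*(\varphi)
\ \Longleftrightarrow\
\iql{\pf{x}\otimes\varphi}=(p-1)\dim\varphi
\ \Longleftrightarrow\
\iql{\varphi_{F(\sqrt[p]{x})}}=\tfrac{p-1}{p}\dim\varphi,
\]
whose right-hand side is manifestly preserved by $\simvw$, so the theorem drops out immediately. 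Your converse step --- that maximal defect of $\pf{x}\otimes\varphi$ forces both anisotropic subforms $\varphi$ and $x\varphi$ of dimension $\dim(\pf{x}\otimes\varphi)_\an$ to coincide with the anisotropic part --- is an easy consequence of Lemma~\ref{Lem:p-subform}(i) ($D_F(\tau)=D_F(\pf{x}\otimes\varphi)$ for any such $\tau$), and it sidesteps $\simf_F$, the divisibility lemma, and the $\psi'$-construction entirely. One small cosmetic remark: to see $(p\times\varphi)_\an\simeq\varphi$ in the forward direction, the cleanest justification is that $D_F(p\times\varphi)=D_F(\varphi)$ (being the same $F^p$-span) plus Lemma~\ref{Lem:p-subform}(i), rather than the polynomial identity, but this does not affect correctness.
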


\begin{proof}
The inclusion $F^p\subseteq G_F(\psi)$ is obvious. Hence, pick $a\in G_F(\varphi)\setminus F^{p}$. Since $G_F(\varphi)=D_F(\simf_F(\varphi))$ is a field, we get that $a^2,\dots,a^{p-1}\in D_F(\simf_F(\varphi))$ and $1\in D_F(\simf_F(\varphi))$. As the $p$-form $\sqf{1,a,\dots,a^{p-1}}\simeq\pf{a}$ is anisotropic, we get $\pf{a}\subseteq\simf_F(\varphi)$ by Lemma~\ref{Lem:p-subform}; therefore, $\pf{a}$ divides $\simf_F(\varphi)$ by Lemma~\ref{Lem:PFdivisibility}. Since further $\simf_F(\varphi)$ divides $\varphi$, we can find a $p$-form $\varphi'$ defined over $F$ such that $\varphi\simeq\pf{a}\otimes\varphi'$. 

Set $E=F(\sqrt[p]{a})$; it holds that $(\varphi_E)_{\an}\simeq (\varphi'_E)_{\an}$. Therefore, we have $\dim(\varphi'_E)_{\an}\leq\frac{1}{p}\dim\varphi$. But the reverse inequality is true by Lemma~\ref{Lemma:p-forms_BasicIsotropy}; hence, $\varphi'_E$ is anisotropic and $\dim(\varphi_E)_{\an}=\frac{1}{p}\dim\varphi$. 

Since $\varphi\simvw\psi$, we have $\dim(\psi_E)_{\an}=\frac{1}{p}\dim\psi$. Let $\psi'$ be a $p$-form over $F$ such that $\psi'_E\simeq(\psi_E)_{\an}$. As it holds that
\[D_F(\psi)\subseteq D_E(\psi)=D_E(\psi')=D_F(\pf{a}\otimes\psi'),\]
we get $\psi\subseteq\pf{a}\otimes\psi'$ by Lemma~\ref{Lem:p-subform}. Comparing the dimensions implies that $\psi\simeq\pf{a}\otimes\psi'$, and hence $a\in G_F(\psi)$. 

All in all, we have proved $G_F(\varphi)\subseteq G_F(\psi)$. The other inclusion follows by the symmetry of the argument. 
\end{proof}

\begin{lemma}  \label{Lem:MultiplesIsometry_pforms}
Let $\tau$ be a $p$-form defined over $F$ and $K/F$ be a field extension such that $K^p\subseteq G_F(\tau)$. Let $\varphi$, $\psi$ be $p$-forms defined over $F$, anisotropic over $K$ and such that $\varphi_K\simeq\psi_K$. Then $\varphi\otimes\tau\simeq\psi\otimes\tau$ over $F$.
\end{lemma}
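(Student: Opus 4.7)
The plan is to prove $\varphi\otimes\tau\simeq\psi\otimes\tau$ by showing that both forms have the same dimension over $F$ and represent the same set of values, i.e.\ $D_F(\varphi\otimes\tau)=D_F(\psi\otimes\tau)$. Together with Lemma~\ref{Lemma:PickAnisp-subform}, the second condition forces the anisotropic parts to be isometric, while the equality of total dimensions forces the defects to agree; combined, these two facts give the desired isometry. The dimension statement is automatic: $\varphi_K\simeq\psi_K$ forces $\dim\varphi=\dim\psi$, hence $\dim(\varphi\otimes\tau)=\dim(\psi\otimes\tau)$.

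To set up the value-set computation, I would fix diagonalisations $\varphi\simeq\sqf{a_1,\dots,a_n}$, $\psi\simeq\sqf{b_1,\dots,b_n}$ and $\tau\simeq\sqf{c_1,\dots,c_m}$ over $F$. The $K$-isometry $\varphi_K\simeq\psi_K$ is then represented by a matrix $M=(m_{ji})\in\mathrm{GL}_n(K)$, and evaluating the $p$-form condition on the standard bases yields
\[
a_i=\sum_{j=1}^n m_{ji}^p\, b_j \qquad \text{for each } i\in\{1,\dots,n\}.
\]
This identity is the only concrete information extracted from the $K$-isometry.

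The crucial step exploits the hypothesis $K^p\subseteq G_F(\tau)$: each $m_{ji}^p$ is a similarity factor of $\tau$ over $F$, so $m_{ji}^p\tau\simeq\tau$ and consequently $m_{ji}^p c_k\in m_{ji}^p D_F(\tau)=D_F(\tau)=\spn_{F^p}\{c_1,\dots,c_m\}$. Writing $m_{ji}^p c_k=\sum_l \beta_{ijkl}^p\, c_l$ with $\beta_{ijkl}\in F$ and multiplying the displayed identity by $c_k$ expresses $a_i c_k$ as an $F^p$-linear combination of the products $b_j c_l$. Hence $a_i c_k\in D_F(\psi\otimes\tau)$ for every $i,k$, which gives $D_F(\varphi\otimes\tau)\subseteq D_F(\psi\otimes\tau)$. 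The reverse inclusion follows by applying the same reasoning to the inverse $K$-isometry, whose matrix $M^{-1}$ also has entries in $K$.

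I do not anticipate a serious obstacle; the proof is essentially mechanical once one recognises that $K^p\subseteq G_F(\tau)$ is precisely the condition needed to convert the $p$-th powers of the $K$-entries of $M$ into $F^p$-relations inside $D_F(\tau)$, thereby descending the $K$-level isometry to an $F$-level identity for the value sets. The delicate point to keep in mind is that $\varphi\otimes\tau$ need not be anisotropic, so equality of the $D_F$'s alone is not quite enough --- this is why one must additionally track that the total dimensions coincide in order to pin down the defects.
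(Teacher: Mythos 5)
Your proof is correct, but it takes a genuinely different route from the paper's. The paper observes that, by anisotropy over $K$, the diagonal entries of $\varphi$ and $\psi$ form two $K^p$-bases of the same $K^p$-vector space $D_K(\varphi)=D_K(\psi)$, decomposes the change of basis into elementary operations (swap, scale by $a\in K^p$, add one entry to another), and checks that each elementary operation preserves the isometry class of the form after tensoring with $\tau$ -- using $a\in K^p\subseteq G_F(\tau)$ for the scaling step and $b_i\tau\ort b_j\tau\simeq(b_i+b_j)\tau\ort b_j\tau$ for the addition step. You instead extract the full change-of-basis matrix $M\in\mathrm{GL}_n(K)$, expand $a_ic_k$ as an $F^p$-linear combination of the $b_jc_l$ via the rewriting $m_{ji}^pc_k\in D_F(\tau)$, and conclude $D_F(\varphi\otimes\tau)=D_F(\psi\otimes\tau)$, which together with equality of total dimensions forces isometry by Lemma~\ref{Lemma:PickAnisp-subform}. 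Both approaches exploit the hypothesis $K^p\subseteq G_F(\tau)$ in essentially the same place. One small bonus of your route: you never actually invoke the anisotropy of $\varphi_K$ and $\psi_K$ (the isometry $\varphi_K\simeq\psi_K$ alone supplies the matrix $M$), whereas the paper's Gaussian-elimination argument needs anisotropy to know the diagonal entries are $K^p$-linearly independent and hence form bases. Two cosmetic points: you should state $m_{ji}^pD_F(\tau)\subseteq D_F(\tau)$ rather than equality, since $m_{ji}$ may be $0$ (the needed conclusion $m_{ji}^pc_k\in D_F(\tau)$ still holds); and the reverse inclusion is immediate by symmetry, as you say, since the matrix of the inverse isometry also has entries in $K$.
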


\begin{proof}
Let $\varphi\simeq\sqf{b_1,\dots,b_m}$ and $\psi\simeq\sqf{c_1,\dots,c_m}$. It follows from the assumptions that $\{b_1,\dots,b_m\}$ and $\{c_1,\dots,c_m\}$ are two bases of the same $K^p$-vector space. Recall that we can get one basis from the other by a finite series of operations of the following type: an exchange of two basis elements, scalar multiplication of one basis element, and adding one basis element to another basis element. Thus, it is sufficient to show that the following hold for any $0\leq i,j\leq m$, $i\neq j$, and $a\in K^p$:
\begin{enumerate}
	\item $\sqf{b_0,\dots, b_i, \dots, b_j, \dots,b_m}\otimes\tau \simeq \sqf{b_0,\dots, b_j, \dots, b_i, \dots,b_m}\otimes\tau$,
	\item $\sqf{b_0,\dots,b_i,\dots,b_m}\otimes\tau \simeq \sqf{b_0,\dots,ab_i,\dots,b_m}\otimes\tau$,
	\item $\sqf{b_0,\dots,b_i,\dots, b_j,\dots,b_m}\otimes\tau \simeq \sqf{b_0,\dots,b_i+b_j,\dots, b_j,\dots,b_m}\otimes\tau$.
\end{enumerate}
Part (i) is obvious. Part (ii) follows from the fact that $b_i\tau\simeq ab_i\tau$ since $a\in G_F(\tau)$. To prove part (iii), note that $b_i\tau\ort b_j\tau\simeq (b_i+b_j)\tau\ort b_j\tau$.
\end{proof}

\begin{proposition} \label{Prop:DividingByPF_pforms}
Let $\pi\simeq\pf{a_1,\dots,a_n}$ be an anisotropic quasi-Pfister form over $F$ and $K=F(\sqrt[p]{a_1},\dots,\sqrt[p]{a_n})$. Assume that $\varphi\simeq\pi\otimes\varphi'$ and $\psi\simeq\pi\otimes\psi'$ are anisotropic $p$-forms over $F$ such that $\varphi'_K\simsim\psi'_K$. Then $\varphi\simsim\psi$.
\end{proposition}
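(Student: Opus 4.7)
The plan is to translate the hypothesis $\varphi'_K \simsim \psi'_K$ into an equality of represented-value sets, observe that these sets actually sit inside $F$, and then descend the similarity scalar from $K^*$ to $F^*$, finishing via Lemma~\ref{Lem:MultiplesIsometry_pforms}. Throughout I use that $K^p = F^p(a_1,\dots,a_n) = G_F(\pi)$, the last equality being Lemma~\ref{Lemma:PFandPN}(i) (together with $D_F(\pi) = G_F(\pi)$).

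First I would check that $\varphi'_K$ and $\psi'_K$ are anisotropic. Since each $a_i = (\sqrt[p]{a_i})^p \in K^p$, the form $\pi_K$ is isometric to $p^n \times \sqf{1}_K$; hence $\varphi_K \simeq \pi_K \otimes \varphi'_K$ gives $(\varphi_K)_{\an} \simeq (\varphi'_K)_{\an}$. Iterating Lemma~\ref{Lemma:p-forms_BasicIsotropy}(iv) $n$ times yields $\dim (\varphi_K)_{\an} \geq p^{-n}\dim \varphi = \dim \varphi'$, forcing $\varphi'_K$---and analogously $\psi'_K$---to be anisotropic. Next, a direct computation with the tensor structure---using that the $F^p$-span of the entries of $\pi$ is precisely $F^p(a_1,\dots,a_n)$---shows that $D_F(\varphi)$ equals the $F^p(a_1,\dots,a_n)$-submodule of $F$ generated by $D_F(\varphi')$, so in particular $D_F(\varphi)$ is already an $F^p(a_1,\dots,a_n)$-module. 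Since $K^p = F^p(a_1,\dots,a_n)$, this yields $D_K(\varphi) = K^p \cdot D_F(\varphi) = D_F(\varphi)$. Moreover, the entries of $\pi$ all lie in $K^p$, so the $K^p$-span of the entries of $\varphi = \pi \otimes \varphi'$ equals the $K^p$-span of the entries of $\varphi'$; hence $D_K(\varphi) = D_K(\varphi')$. Symmetric statements hold for $\psi$, so $D_F(\varphi) = D_K(\varphi')$ and $D_F(\psi) = D_K(\psi')$ are, in particular, subsets of $F$.

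By hypothesis there is $c \in K^*$ with $\varphi'_K \simeq c\psi'_K$; as both forms are anisotropic over $K$ of equal dimension, this is equivalent to $D_K(\varphi') = c \cdot D_K(\psi')$, i.e.\ $D_F(\varphi) = c \cdot D_F(\psi)$. Picking any nonzero $d \in D_F(\psi) \subseteq F$ gives $cd \in D_F(\varphi) \subseteq F$, forcing $c = (cd)/d \in F^*$. Now $c\psi'$ is a $p$-form over $F$, anisotropic over $K$, and $\varphi'_K \simeq (c\psi')_K$; applying Lemma~\ref{Lem:MultiplesIsometry_pforms} with $\tau = \pi$ and using $K^p \subseteq G_F(\pi)$ gives $\varphi' \otimes \pi \simeq c\psi' \otimes \pi$ over $F$, that is, $\varphi \simeq c\psi$, and thus $\varphi \simsim \psi$.

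The main obstacle, which drives the whole argument, is establishing the identity $D_K(\varphi) = D_F(\varphi)$ (and the analogue for $\psi$): this stability---crucially dependent on $\pi$ being the quasi-Pfister form built from the very $a_i$'s defining $K$---is what prevents the similarity scalar from genuinely involving the adjoined $p$-th roots. Once it is in place, $c$ is automatically forced into $F^*$, and Lemma~\ref{Lem:MultiplesIsometry_pforms} finishes the job cleanly.
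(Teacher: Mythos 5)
Your proof is correct and takes essentially the same approach as the paper. The paper normalizes $1 \in D_F(\varphi')\cap D_F(\psi')$ so that $c\in D_K(\varphi')=\operatorname{span}_{K^p}\{1,b_1,\dots,b_m\}\subseteq F$ directly; you instead derive $D_K(\varphi')=D_F(\varphi)\subseteq F$ and force $c\in F^*$ by dividing two elements of $F$ — the same observation in slightly different clothing. (The paper also deduces anisotropy of $\varphi'_K$ from a cited theorem that gives $p^n\iql{\varphi'_K}=\iql{\varphi}$, while you reprove it by iterating Lemma~\ref{Lemma:p-forms_BasicIsotropy}; both are fine.)
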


\begin{proof}
Without loss of generality, assume that $1\in D_F(\varphi')\cap D_F(\psi')$. Write $\varphi'\simeq\sqf{1,b_1,\dots,b_m}$, and let $c\in K^*$ be such that $c\psi'_K\simeq\varphi'_K$. Then 
\[c\in D_K(\varphi')=\spn_{K^p}\{1,b_1,\dots,b_m\}\subseteq F;\] 
thus, $c\psi'$ is defined over $F$. By \cite[Thm.~3.3]{KZ23-fsp}, we have $p^n\iql{\varphi'_K}=\iql{\varphi}=0$, and hence $\varphi'_K$ is anisotropic; analogously, we get that $c\psi'_K$ is anisotropic. Now Lemma~\ref{Lem:MultiplesIsometry_pforms} implies that $\pi\otimes\varphi'\simeq\pi\otimes c\psi'$. As $\pi\otimes c\psi'\simeq c(\pi\otimes\psi')$, the claim follows.
\end{proof}

\begin{remark}
Theorem~\ref{Thm:SimilarityFactorsVishik_pforms} and Proposition~\ref{Prop:DividingByPF_pforms} can be used for a simplification of the problem, whether Vishik equivalence implies the similarity. Namely, we can restrict ourselves to the case of forms  with the similarity factors $F^p$: Let $\varphi$, $\psi$ be two anisotropic $p$-forms over $F$ such that $\varphi\simv\psi$. Since ${G_F(\varphi)=G_F(\psi)}$ by Theorem~\ref{Thm:SimilarityFactorsVishik_pforms}, we can write $\varphi\simeq\pi\otimes\varphi'$ and $\psi\simeq\pi\otimes\psi'$ with the quasi-Pfister form $\pi\simeq\simf_F(\varphi)\simeq\simf_F(\psi)$ and some anisotropic $p$-forms $\varphi'$, $\psi'$ defined over $F$. Let $\pi\simeq\pf{a_1,\dots,a_n}$, and put $K=F(\sqrt[p]{a_1},\dots, \sqrt[p]{a_n})$. Then $\varphi'_K$, $\psi'_K$ are anisotropic by \cite[Prop.~5.7]{Hof04}. Hence $(\varphi_K)_{\an}\simeq\varphi'_K$ and $(\psi_K)_{\an}\simeq\psi'_K$, and we get $\varphi'_K\simv\psi'_K$ by Lemma~\ref{Lem:VishikWrtExt} (note that to apply this lemma, we need the full strength of the Vishik equivalence, the weak Vishik equivalence does not have to be sufficient here). If we knew that $\varphi'_K\simsim\psi'_K$, it would follow by Proposition~\ref{Prop:DividingByPF_pforms} that $\varphi\simsim\psi$.
\end{remark}

%--------------------------------------------------------
\subsection{Counterexamples}

Recall that by Proposition~\ref{Prop:Vishik_PF}, Question~\ref{MQ} has a positive answer for quasi-Pfister forms and quasi-Pfister neighbors of codimension one (i.e., Vishik equivalence is sufficient for the $p$-forms to be similar). It would be nice if Question~\ref{MQ} had a positive answer for all $p$-forms. Unfortunately, this is not true in general -- as we will see, it is not even true for all quasi-Pfister neighbors. We will provide two examples of pairs of $p$-forms that are Vishik equivalent but not similar. However, both of the counterexamples have two things in common: First, the considered $p$-forms are subforms of $\pf{a}$, and so they have norm degree one. Second, they do not work for $p=2$ and $p=3$. We will focus on the characteristic two case in Section~\ref{Sec:Vishik_TSQF}, but the case $p=3$ remains open as well as $p$-forms of higher norm degrees.

\bigskip

Before we can get to the counterexamples, we need some lemmas.

\begin{lemma} \label{Lem:2dimIsotropy}
Let $1\leq k,l\leq p-1$, $a\in F\setminus F^p$, and let $E/F$ be a field extension. Then the following are equivalent:

\noindent\begin{minipage}{\textwidth}
\begin{enumerate}
	\item $\sqf{1,a^k}$ is isotropic over $E$,
	\item $\sqf{1,a^l}$ is isotropic over $E$,
	\item $a^k\in E^p$,
	\item $a^l\in E^p$.
\end{enumerate}
\end{minipage}
\end{lemma}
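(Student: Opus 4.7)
The plan is to establish two biconditionals: $\mathrm{(i)}\Leftrightarrow\mathrm{(iii)}$ and $\mathrm{(ii)}\Leftrightarrow\mathrm{(iv)}$ by a direct calculation involving the definition of isotropy, together with the extra equivalence $\mathrm{(iii)}\Leftrightarrow\mathrm{(iv)}$ which is a purely group-theoretic statement about $E^*/(E^*)^p$. Since the claims (i)--(iii) and (ii)--(iv) are perfectly symmetric in $k$ and $l$, it suffices to do (i)$\Leftrightarrow$(iii) once and then handle the passage from $a^k$ to $a^l$.

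For $\mathrm{(i)}\Leftrightarrow\mathrm{(iii)}$, the $p$-form $\sqf{1,a^k}_E$ is isotropic precisely when there exist $x,y\in E$, not both zero, with $x^p+a^ky^p=0$. The case $y=0$ forces $x=0$, so $y\ne 0$ and $a^k=-(x/y)^p$. The key observation is that $-1\in E^p$ in every characteristic (trivially in characteristic $2$, and in odd characteristic because $(-1)^p=-1$), so $a^k\in E^p$. Conversely, if $a^k=z^p$, one verifies that $(x,y)=(-z,1)$ (or $(z,1)$ in characteristic $2$) gives a nontrivial zero of $\sqf{1,a^k}$. The same argument, verbatim with $k$ replaced by $l$, proves $\mathrm{(ii)}\Leftrightarrow\mathrm{(iv)}$.

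For $\mathrm{(iii)}\Leftrightarrow\mathrm{(iv)}$, the point is that the subgroup $E^p\subseteq E^*$ has index which is a power of $p$, so $E^*/(E^*)^p$ is an $\mathbb{F}_p$-vector space. Since $1\le k\le p-1$, we have $\gcd(k,p)=1$, hence integers $u,v$ with $uk+vp=1$; then $a=(a^k)^u\cdot(a^v)^p$ shows that $a^k\in E^p$ if and only if $a\in E^p$. The same argument applied to $l$ yields $a^l\in E^p\Leftrightarrow a\in E^p$. Chaining these gives $\mathrm{(iii)}\Leftrightarrow\mathrm{(iv)}$, closing the cycle of equivalences.

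There is no serious obstacle here; the statement is essentially a combination of the definition of isotropy for a two-dimensional $p$-form with the elementary fact that $a$ and $a^k$ generate the same subgroup of $E^*/(E^*)^p$ when $\gcd(k,p)=1$. The only point that requires a moment's care is ensuring that the argument works uniformly in all characteristics $p\ge 2$, which is why one invokes $-1=(-1)^p$ rather than dividing by $2$.
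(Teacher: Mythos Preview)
Your proof is correct and follows essentially the same approach as the paper's: the direct isotropy computation for (i)$\Leftrightarrow$(iii), and invertibility of $k$ modulo $p$ for (iii)$\Leftrightarrow$(iv). One minor quibble: the index $[E^*:(E^*)^p]$ need not be finite (hence not literally ``a power of $p$''), but your actual argument only uses that every element of $E^*/(E^*)^p$ has order dividing $p$, which is immediate.
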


\begin{proof}
Assume that $\sqf{1,a^k}$ is isotropic over $E$. This is equivalent to the existence of $x,y\in E$, at least one (and hence both) of them nonzero, such that $x^p+a^ky^p=0$, which is equivalent to $a^k=\frac{(-x)^p}{y^p}\in E^p$.  This proves both (i) $\Leftrightarrow$ (iii) and (ii) $\Leftrightarrow$ (iv).

Now let $1\leq t\leq p-1$ be such that $kt\equiv l\pmod p$. If $a^k\in E^p$, then $a^{kt}\in E^p$, and hence also $a^l\in E^p$. This proves (iii) $\Rightarrow$ (iv) and, by the symmetry of the argument, also (iv) $\Rightarrow$ (iii).
\end{proof}

\begin{lemma} \label{Lem:2dimSimilarity}
Let $a\in F\setminus F^p$ and $k,l\in\Z$ with $k,l\not\equiv0\pmod p$. Then $\sqf{1,a^k}\simsim\sqf{1,a^l}$ if and only if $k\equiv\pm l\pmod p$.
\end{lemma}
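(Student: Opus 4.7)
The plan is to reduce to the case $1\leq k,l\leq p-1$ and then argue by $F^p$-linear independence of $\{1,a,\dots,a^{p-1}\}$, which is available because $a\notin F^p$ forces $\pf{a}$ to be anisotropic. For the reduction, note that $a^{m+p}=a^m\cdot a^p$ with $a^p\in F^p$, so the rule $\sqf{x}\simeq\sqf{xy^p}$ yields $\sqf{1,a^m}\simeq\sqf{1,a^{m'}}$ whenever $m\equiv m'\pmod p$. Both sides of the claimed equivalence therefore depend only on $k\bmod p$ and $l\bmod p$, and under the reduction the condition $k\equiv\pm l\pmod p$ becomes ``$k=l$ or $k+l=p$''.

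For the ``if'' direction, the case $k=l$ is trivial; if $k+l=p$, I would compute directly
\[
a^l\sqf{1,a^k}=\sqf{a^l,a^{k+l}}=\sqf{a^l,a^p}\simeq\sqf{a^l,1}\simeq\sqf{1,a^l},
\]
using $\sqf{a^p}\simeq\sqf{1}$, which yields the desired similarity.

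For the ``only if'' direction, pick $c\in F^*$ with $\sqf{c,ca^k}\simeq\sqf{1,a^l}$; then $c,ca^k\in D_F(\sqf{1,a^l})=\spn_{F^p}\{1,a^l\}$. Writing $c=\alpha+\beta a^l$ with $\alpha,\beta\in F^p$, the condition on $ca^k=\alpha a^k+\beta a^{k+l}$ yields $\gamma,\delta\in F^p$ with $\alpha a^k+\beta a^{k+l}=\gamma+\delta a^l$. Assuming $k\neq l$, I would split into three cases according to $k+l$. If $k+l\leq p-1$, the four exponents $0,k,l,k+l$ are pairwise distinct elements of $\{0,\dots,p-1\}$, so $F^p$-linear independence forces $\alpha=\beta=0$, hence $c=0$, a contradiction. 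If $k+l\geq p+1$, I absorb the factor $a^p\in F^p$ into the coefficients and rewrite the relation using $s:=k+l-p\in\{1,\dots,p-2\}$; the exponents $0,k,l,s$ are again pairwise distinct (the equalities $k=s$ and $l=s$ would force $l=p$ and $k=p$ respectively, both excluded), so the same argument gives the contradiction $c=0$. Only the middle case $k+l=p$ survives, which is precisely $k\equiv -l\pmod p$.

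The main obstacle is the exponent bookkeeping in the third case ($k+l>p$): after absorbing $a^p$ into $F^p$ one must verify that the residual exponent $s=k+l-p$ is still distinct from $0$, $k$, and $l$, so that the $F^p$-linear independence of $\{1,a,\dots,a^{p-1}\}$ can be invoked to rule the case out.
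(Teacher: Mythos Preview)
Your proof is correct and rests on the same core idea as the paper: the anisotropy of $\pf{a}$, equivalently the $F^p$-linear independence of $1,a,\dots,a^{p-1}$. The paper organizes the ``only if'' direction a bit differently: instead of reducing to $1\leq k,l\leq p-1$ and splitting on whether $k+l<p$, $=p$, or $>p$, it observes that when $k\not\equiv\pm l\pmod p$ the form $\sqf{1,a^k,a^l,a^{k+l}}$ is an anisotropic subform of $\pf{a}$, uses uniqueness of the representation of $1$ by this form to force $c\in F^p$, and then derives $a^l\in D_F(\sqf{1,a^k})$ for the final contradiction. Your route reaches the contradiction $c=0$ more directly but pays for it with the explicit case analysis; the substance of the two arguments is the same.
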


\begin{proof}
If $k\equiv l\pmod p$, then $k=pm+l$ for some $m\in\Z$, and so we have $\sqf{1,a^k}\simeq\sqf{1, a^{pm+l}}\simeq\sqf{1,a^l}$. If $k\equiv-l\pmod p$, then we can write $k=pn-l$ for some $n\in\Z$, and then we have 
\[\sqf{1,a^k}=\sqf{1,a^{pn-l}}\simeq\sqf{1,a^{-l}}\simsim a^l\sqf{1,a^{-l}}\simeq\sqf{a^l,1}\simeq\sqf{1, a^l}.\]

To prove the opposite direction, assume that $\sqf{1,a^k}\simsim\sqf{1,a^l}$, i.e., there exists $c\in F^*$ such that $c\sqf{1,a^k}\simeq\sqf{1,a^l}$. Then necessarily $c\in D_F(\sqf{1,a^l})$, so we can find $x,y\in F$ such that $c=x^p+a^ly^p$. On the other hand, $1\in D_F(c\sqf{1,a^k})$, and thus $1=cu^p+ca^kv^p$ for some $u,v\in F$. Putting these together, we have 
\[1=(x^p+a^ly^p)u^p+(x^p+a^ly^p)a^ku^p=(xu)^p+a^l(yu)^p+a^k(xv)^p+a^{k+l}(yv)^p.\] 
Suppose $k\not\equiv\pm l\pmod p$. Then $\tau=\sqf{1, a^k, a^l, a^{k+l}}$ is a subform of $\pf{a}$, and hence anisotropic. Thus, $\tau$ represents every element of $D_F(\tau)$ uniquely, so we get $xu\neq 0$ and $yu=xv=yv=0$. This implies $x,u\neq0$ and $y=v=0$; hence, $c=x^p$ and $\sqf{1,a^k}\simeq\sqf{1,a^l}$. Thus, $a^l\in D_F(\sqf{1,a^k})$, which is impossible for $l\not\equiv0,k\pmod p$.
\end{proof}

\begin{lemma} \label{Lemma:ViskikEquivQuasiPN}
Let $a\in F\setminus F^p$ and $\varphi$, $\psi$ be quasi-Pfister neighbors of $\pf{a}$ of the same dimension. Then $\varphi\simv\psi$.
\end{lemma}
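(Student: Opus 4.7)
The plan is to verify the two conditions of Vishik equivalence: the dimensions agree by hypothesis, so I need only show $\iql{\varphi_E}=\iql{\psi_E}$ for every field extension $E/F$. Set $n=\dim\varphi=\dim\psi$, and fix $c,d\in F^*$ with $c\varphi\subseteq\pf{a}$ and $d\psi\subseteq\pf{a}$; since $a\in F\setminus F^p$, the form $\pf{a}$ is anisotropic, hence so are its subforms $c\varphi$ and $d\psi$, and therefore $\varphi$ and $\psi$ themselves. I would split on whether $a\in E^p$.

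If $a\notin E^p$, then $\pf{a}_E$ is anisotropic, so Lemma~\ref{Lemma:PFandPN}(iii) applied to both $\varphi$ and $\psi$ forces $\varphi_E$ and $\psi_E$ to be anisotropic; thus $\iql{\varphi_E}=\iql{\psi_E}=0$.

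If $a\in E^p$, then every element of $D_F(\pf{a})=\spn_{F^p}\{1,a,\dots,a^{p-1}\}$ lies in $E^p$, because $E^p$ is a subfield of $E$ containing $F^p$ and all powers of $a$. Writing $c\varphi\simeq\sqf{c_1,\dots,c_n}$, the anisotropy of $c\varphi$ forces each $c_i\neq 0$, and the inclusion $c\varphi\subseteq\pf{a}$ gives $c_i\in D_F(\pf{a})\subseteq E^p$. Using $\sqf{x}\simeq\sqf{xy^p}$, each $\sqf{c_i}_E\simeq\sqf{1}_E$, so $c\varphi_E\simeq n\times\sqf{1}_E$. Since $D_E(n\times\sqf{1}_E)=E^p$ is a one-dimensional $E^p$-vector space, Lemma~\ref{Lem:p-subform}(i) yields $(c\varphi_E)_{\an}\simeq\sqf{1}_E$, hence $\iql{\varphi_E}=\iql{c\varphi_E}=n-1$. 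The identical computation for $\psi$ gives $\iql{\psi_E}=n-1$, so the two defects agree in this case as well.

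The argument is essentially a routine case analysis; the only mildly delicate step is observing that any anisotropic representative of $c\varphi$ has all its entries already in $D_F(\pf{a})$, so the moment $a\in E^p$ those entries become $p$-th powers and the form collapses uniformly to $n\times\sqf{1}_E$, which has defect depending only on $n$.
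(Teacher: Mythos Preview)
Your proof is correct and follows essentially the same approach as the paper's: both case-split on whether $\pf{a}_E$ is anisotropic (equivalently, whether $a\in E^p$), using Lemma~\ref{Lemma:PFandPN}(iii) in the anisotropic case and collapsing everything to $\sqf{1}_E$ in the isotropic case. The only cosmetic difference is that in the isotropic case the paper invokes Lemma~\ref{Lemma:PFandPN}(ii) to get $(\pf{a}_E)_{\an}\simeq\sqf{1}_E$ and then bounds $(\varphi_E)_{\an}$ from above, whereas you compute directly that all diagonal entries of $c\varphi$ lie in $E^p$; both lead to the same conclusion.
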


\begin{proof}
Obviously, both $\varphi$ and $\psi$ are anisotropic. Let $E/F$ be any field extension; it holds $(\varphi_E)_{\an},(\psi_E)_{\an}\subseteq (\pf{a}_E)_{\an}$. Then either $\pf{a}_E$ is anisotropic, and hence both $\varphi_E$ and $\psi_E$ are anisotropic, or we have $(\pf{a}_E)_{\an}\simeq\sqf{1}_E$ by Lemma~\ref{Lemma:PFandPN}, in which case $(\varphi_E)_{\an}\simeq\sqf{1}_E\simeq(\psi_E)_{\an}$. Consequently, $\varphi\simv\psi$.
\end{proof}

\begin{example}\label{Ex:CounterExA_p-forms}
Let $p>3$ and $\varphi\simeq\sqf{1,a}$, $\psi\simeq\sqf{1,a^2}$ be $p$-forms with $a\in F\setminus F^p$. Note that $\varphi$ and $\psi$  are quasi-Pfister neighbors of $\pf{a}$; therefore, $\varphi\simv\psi$ by Lemma~\ref{Lemma:ViskikEquivQuasiPN}. On the other hand, Lemma \ref{Lem:2dimSimilarity} ensures that $\varphi$ and $\psi$ are not similar for any $p>3$.

Note that the problems with characteristics two and three are different. If $p=2$, then $\varphi$ is anisotropic while $\psi$ is isotropic. If $p=3$, it holds that $a^2\varphi\simeq\psi$.
\end{example}

\begin{example}\label{Ex:CounterExB_p-forms}
Let $p=5$ and $a\in F\setminus F^5$. Set $\pi\simeq\sqf{1,a,a^2,a^3,a^4}\simeq\pf{a}$, $\varphi\simeq\sqf{1,a,a^2}$ and $\psi\simeq\sqf{1,a,a^3}$. It follows by Lemma~\ref{Lemma:ViskikEquivQuasiPN}  that $\varphi\simv\psi$.

Assume that $c\in F$ is such that $c\psi\simeq\varphi$. Note that 
\begin{align*} 
D_F(\varphi)&=\{x_0^5+ax_1^5+a^2x_2^5~|~x_i\in F, \ 0\leq i\leq2\}, \\
D_F(\pi)&=\{x_0^5+ax_1^5+a^2x_2^5+a^3x_3^5+a^4x_4^5~|~x_i\in F, \ 0\leq i\leq4\},
\end{align*}
and the expression of any element of $D_F(\varphi)$ (resp. $D_F(\pi)$) is unique thanks to the anisotropy of $\varphi$ (resp. $\pi$). In particular, $\varphi$ does not represent any term of the form $a^3x^5$ or $a^4x^5$ with $x\in F^*$.

As $c\in D_F(\varphi)$, we can write $c=x^5+ay^5+a^2z^5$ for some $x,y,z\in F$. Since $ca=ax^5+a^2y^5+a^3z^5\in D_F(\varphi)$, it follows that $z=0$. Furthermore, $ca^3=a^3x^5+a^4y^5+(az)^5\in D_F(\varphi)$ implies that $x=y=0$. But then $c=0$, which is absurd. Therefore, $\varphi$ and $\psi$ are not similar.
\end{example}

%=====================================================================================================
\section{Vishik equivalence on totally singular quadratic forms} \label{Sec:Vishik_TSQF}

In this section, we will answer Question~\ref{MQ} for some families of totally singular quadratic forms. Note that totally singular quadratic forms are the special case of $p$-forms with $p=2$, so we will use some results from the previous section. In particular, by Proposition~\ref{Prop:Vishik_PF}, we already know that Question~\ref{MQ} has a positive answer in case of quasi-Pfister forms and quasi-Pfister neighbors of codimension one. 

There is one tool which is not available for $p>2$ (at least not this strong version of it, cf. the note behind Question~4.7 in \cite{Scu13}) that we will use repeatedly:  

\begin{proposition}[{\cite[Prop.~2.11]{Lag06}}] \label{Prop:tsqf_MainTool} \label{Prop:IsotropyOverQuadrExt}
Let $a\in F\setminus F^2$ and let $\varphi$ be an anisotropic totally singular quadratic form over $F$ with $\dim\varphi\geq2$. If $\varphi$ becomes isotropic over $E=F(\sqrt{a})$, then there exists a totally singular quadratic form $\tau$ over $F$ such that $\dim\tau=\iql{\varphi_E}$ and $\tau\otimes\sqf{1,a}\subseteq\varphi$.
\end{proposition}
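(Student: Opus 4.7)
The plan is to extract a divisible-by-$\sqf{1,a}$ subform of $\varphi$ directly from its value set $D_F(\varphi) \subseteq F$, exploiting that in characteristic two the scalar extension to $E = F(\sqrt{a})$ is captured by tensoring with the Pfister form $\pf{a} = \sqf{1,a}$.

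Set $n = \dim\varphi$, $d = \iql{\varphi_E}$, and consider the $F^2$-subspace
\[
W \;=\; D_F(\varphi) \,\cap\, a\,D_F(\varphi) \;\subseteq\; F.
\]
Since $\varphi$ is anisotropic, Lemma~\ref{Lem:p-subform} gives $\dim_{F^2} D_F(\varphi) = n$, and the same lemma applied to $\varphi_E$ gives $\dim_{E^2} D_E(\varphi) = n - d$, hence $\dim_{F^2} D_E(\varphi) = 2(n-d)$ as $[E^2 : F^2] = 2$. Combining this with the identity $D_E(\varphi) = D_F(\varphi) + a\,D_F(\varphi)$ (part~(i) of Lemma~\ref{Lemma:p-forms_BasicIsotropy}) and the standard formula $\dim(U+V) + \dim(U\cap V) = \dim U + \dim V$ for $F^2$-spaces, one reads off $\dim_{F^2} W = 2d$.

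Next, I would upgrade $W$ to an $E^2$-vector space. For $w \in W$, write $w = au$ with $u \in D_F(\varphi)$; then $aw = a^2 u \in D_F(\varphi)$ (because $a^2 \in F^2$ and $D_F(\varphi)$ is an $F^2$-module), and visibly $aw \in a\,D_F(\varphi)$, so $aw \in W$. Together with $F^2$-linearity, this shows $W$ is stable under multiplication by every element of $E^2 = F^2(a)$, hence is an $E^2$-vector space of dimension $d$. Pick an $E^2$-basis $c_1, \dots, c_d$ of $W$; then $c_1, ac_1, \dots, c_d, ac_d$ is an $F^2$-basis of $W$, so these $2d$ elements are $F^2$-linearly independent and all lie in $D_F(\varphi)$. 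By Lemma~\ref{Lem:p-subform}(ii) the (anisotropic) form $\sqf{c_1, ac_1, \dots, c_d, ac_d}$ embeds as a subform of $\varphi$. Setting $\tau = \sqf{c_1, \dots, c_d}$ yields
\[
\tau \otimes \sqf{1,a} \;\simeq\; \sqf{c_1, ac_1, \dots, c_d, ac_d} \;\subseteq\; \varphi,
\]
and $\dim\tau = d = \iql{\varphi_E}$, as required.

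The most delicate point is the dimension count producing $\dim_{F^2} W = 2d$: this is where the characteristic-two hypothesis enters essentially, through $[E^2 : F^2] = 2$, converting the defect $d$ into concrete $F^2$-codimension inside $D_F(\varphi)$. Once $W$ is identified, the remaining work -- promoting it to an $E^2$-module and reading off the subform via Lemma~\ref{Lem:p-subform} -- is purely formal.
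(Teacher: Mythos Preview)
Your argument is correct. Note, however, that the paper does not supply its own proof of this proposition: it is quoted verbatim from \cite[Prop.~2.11]{Lag06}, so there is no in-paper proof to compare against. Your approach via the $F^2$-subspace $W = D_F(\varphi)\cap a\,D_F(\varphi)$ is clean and self-contained; the key dimension count $\dim_{F^2}W = 2d$ is justified exactly as you say (using Lemma~\ref{Lemma:p-forms_BasicIsotropy}(i) and the inclusion--exclusion formula), and the observation that $W$ is stable under multiplication by $a$, hence an $E^2$-module, is what lets you extract an $E^2$-basis $c_1,\dots,c_d$ whose $F^2$-expansion $c_1,ac_1,\dots,c_d,ac_d$ is linearly independent inside $D_F(\varphi)$. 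Lemma~\ref{Lem:p-subform} then finishes the job. One small remark: you might make explicit that the $c_i$ are nonzero (they form a basis), so that $\sqf{c_1,ac_1,\dots,c_d,ac_d}$ is genuinely anisotropic and Lemma~\ref{Lem:p-subform}(ii) gives the subform without passing to an anisotropic part.

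Your closing comment is also on point: the identity $[E^2:F^2]=2$ is exactly where $p=2$ enters, and the paper itself flags (just before the proposition) that no equally strong statement is available for $p>2$.
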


%--------------------------------------
\subsection{Minimal quadratic forms}

Recall that a minimal quadratic form is an anisotropic totally singular quadratic form $\varphi$ over $F$ such that $\ndeg_F\varphi=2^{\dim\varphi-1}$. 

We start with some preparatory lemmas on forms of dimensions two and three. Note that all anisotropic totally singular quadratic forms of dimensions two and three are minimal by Lemma~\ref{Lemma:PrereqForNormForm}. 

\begin{lemma} \label{Lem:2dimCong}
Let $b,c\in F\setminus F^{2}$. Then $c\in D_F(\sqf{1,b})$ if and only if ${b\in D_F(\sqf{1,c})}$ if and only if $\sqf{1,b}\simeq\sqf{1,c}$.
\end{lemma}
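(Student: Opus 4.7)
The plan is to prove the chain of equivalences
\[
c\in D_F(\sqf{1,b}) \ \Longleftrightarrow\  \sqf{1,b}\simeq\sqf{1,c} \ \Longleftrightarrow\  b\in D_F(\sqf{1,c}),
\]
noting that the two outer conditions are interchanged by swapping $b\leftrightarrow c$, so it suffices to prove one equivalence, say the first, and then invoke symmetry.

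For the implication $c\in D_F(\sqf{1,b})\Rightarrow\sqf{1,b}\simeq\sqf{1,c}$: since $c\notin F^{2}$, the elements $1$ and $c$ are $F^{2}$-linearly independent, so by Lemma~\ref{Lemma:PickAnisp-subform}(i) we have $\sqf{1,c}_{\an}\simeq\sqf{1,c}$, i.e., $\sqf{1,c}$ is anisotropic. Because $1,c\in D_F(\sqf{1,b})$, part (ii) of the same lemma gives $\sqf{1,c}_{\an}\subseteq\sqf{1,b}$, and hence $\sqf{1,c}\subseteq\sqf{1,b}$. Both forms have dimension two, so this containment is an isometry.

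The reverse implication $\sqf{1,b}\simeq\sqf{1,c}\Rightarrow c\in D_F(\sqf{1,b})$ is immediate, since isometric forms represent the same elements and $c\in D_F(\sqf{1,c})$.

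Swapping the roles of $b$ and $c$ (both still lie in $F\setminus F^{2}$, so the hypothesis of the lemma is symmetric) yields the equivalence $b\in D_F(\sqf{1,c})\Leftrightarrow\sqf{1,c}\simeq\sqf{1,b}$, which completes the chain. I do not anticipate any real obstacle: the whole argument reduces to a one-line application of the two parts of Lemma~\ref{Lemma:PickAnisp-subform} together with a dimension count.
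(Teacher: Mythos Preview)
Your proof is correct and follows essentially the same idea as the paper's: both rest on the observation that $c\notin F^2$ forces $\sqf{1,c}$ to be anisotropic, after which the isometry is immediate. The only cosmetic difference is that the paper writes out the isometry chain explicitly ($c=x^2+by^2$ with $y\neq0$, hence $\sqf{1,b}\simeq\sqf{1,by^2}\simeq\sqf{1,c}$), whereas you package the same step as an application of Lemma~\ref{Lemma:PickAnisp-subform}.
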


\begin{proof}
$c\in D_F(\sqf{1,b})$ holds if we can find $x,y\in F$ such that $x^2+by^2=c$; as $c\notin F^{2}$, we have $y\neq 0$. It follows $\sqf{1,b}\simeq\sqf{1,by^2}\simeq\sqf{1,x^2+by^2}\simeq\sqf{1,c}$, and thus also $b\in D_F(\sqf{1,c})$.
\end{proof}

\begin{lemma}\label{Lem:3dimSim}
Let $a,b, x,y\in F$ and $y\neq0$. Then  
\[\sqf{1, a,bx^2+aby^2}\simeq\left(a+\left(\tfrac{x}{y}\right)^2\right)\sqf{1, a,b}.\]
\end{lemma}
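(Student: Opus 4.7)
The plan is to rewrite both sides so they share a common orthogonal summand, and then reduce the remaining identity to the similarity-factor property of the $1$-fold quasi-Pfister form $\pf{a}$.

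First, I set $d = x^2 + ay^2$, so that the scalar on the right-hand side equals $a + (x/y)^2 = d/y^2$. Since $y^2 \in F^{*2}$, scaling each diagonal entry of $(d/y^2)\sqf{1, a, b}$ by the nonzero square $y^2$ gives the isometry
\[
\bigl(a + (x/y)^2\bigr)\sqf{1, a, b} \simeq \sqf{d, ad, bd} = d\sqf{1, a} \ort \sqf{bd}.
\]
On the other hand, the left-hand side is $\sqf{1, a, bd} = \sqf{1, a} \ort \sqf{bd}$. Hence it suffices to establish the key similarity
\[
d\sqf{1, a} \simeq \sqf{1, a},
\]
for then adding $\sqf{bd}$ orthogonally finishes the job.

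Second, I observe that $d = 1\cdot x^2 + a\cdot y^2 \in D_F(\sqf{1, a}) = D_F(\pf{a})$. Provided $d \neq 0$, Lemma~\ref{Lemma:PFandPN}(i) applied to the $1$-fold quasi-Pfister form $\pf{a}$ gives $G_F(\pf{a}) = D_F(\pf{a})$, so $d \in G_F^*(\pf{a})$, i.e., $d\pf{a} \simeq \pf{a}$. This is exactly the key similarity above.

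The only subtle point is the nonvanishing of $d$. If $a \notin F^2$, this is automatic, since $d = 0$ would force $a = (x/y)^2 \in F^2$; and if $a \in F^2$, the condition $d \neq 0$ coincides with the hypothesis that the right-hand side scalar $a + (x/y)^2$ is nonzero, which I take to be implicit in the statement (otherwise $(a+(x/y)^2)\sqf{1,a,b}$ degenerates to the zero form, whereas the left-hand side does not). I do not foresee any serious obstacle beyond this observation; once $d$ is recognized as a similarity factor of $\pf{a}$, the two-line isometry chain above is the complete argument.
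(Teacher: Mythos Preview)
Your proof is correct and takes a slightly different route from the paper's. The paper argues by direct entry manipulation: writing $c=a+(x/y)^2$, it uses the elementary rules $\sqf{a,b}\simeq\sqf{a+b,b}$ and $\sqf{a}\simeq\sqf{ax^2}$ to pass from $\sqf{1,a,bx^2+aby^2}$ to $\sqf{1,c,bc}$, then factors $\sqf{1,c,bc}\simeq c\sqf{c,1,b}$ via $\sqf{1}\simeq\sqf{c^2}$, and finally rewrites $\sqf{c,1}\simeq\sqf{a,1}$. You instead isolate the common orthogonal summand $\sqf{bd}$ and reduce the whole question to the single assertion $d\sqf{1,a}\simeq\sqf{1,a}$, which you read off from the round property $G_F(\pf{a})=D_F(\pf{a})$ of Lemma~\ref{Lemma:PFandPN}. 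Both arguments are short; yours is a touch more conceptual in that it names the underlying phenomenon (roundness of quasi-Pfister forms) rather than verifying a special case of it by hand. Your remark about the degenerate case $d=0$ (equivalently $a+(x/y)^2=0$) is well taken: the paper's computation tacitly needs the same nonvanishing (the step $\sqf{1}\simeq\sqf{c^2}$ fails for $c=0$), and the lemma is in fact only invoked downstream with $a\notin F^2$, where the issue cannot arise.
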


\begin{proof}
We have 
\begin{multline*}
\sqf{1,a,bx^2+aby^2}
\simeq \left\langle{1,a+\left(\tfrac{x}{y}\right)^2, b\left(a+\left(\tfrac{x}{y}\right)^2\right)}\right\rangle \\
\simeq \left(a+\left(\tfrac{x}{y}\right)^2\right) \left\langle{ a+\left(\tfrac{x}{y}\right)^2,1, b}\right\rangle
\simeq \left(a+\left(\tfrac{x}{y}\right)^2\right) \sqf{1, a, b}.
\qedhere
\end{multline*}
\end{proof}

\begin{lemma}\label{Lem:3dimRewr}
Let $a,b,c\in F\setminus F^{2}$, and suppose that $a\in D_F(\sqf{1,c,bc})$. Then there exists $s\in F$ such that $\sqf{1,c,bc}\simeq(a+s^2)\sqf{1,a,b}$.
\end{lemma}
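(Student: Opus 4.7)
The plan is to set $s = x$, where $a = x^2 + cy^2 + bcz^2$ is any expression witnessing $a\in D_F(\sqf{1,c,bc})$, and then reduce to Lemma~\ref{Lem:3dimSim}. First I would note that in characteristic~$2$ the identity $a + x^2 = cy^2 + bcz^2$ gives
\[
bx^2 + ab \;=\; b(a+x^2) \;=\; bcy^2 + b^2cz^2 \;=\; y^2\cdot bc + (bz)^2\cdot c,
\]
so $bx^2 + ab \in \spn_{F^2}\{c,bc\}$. Since $a\notin F^2$ forces $a+x^2\neq 0$, Lemma~\ref{Lem:3dimSim} applied with $y=1$ yields $\sqf{1,a,bx^2+ab} \simeq (a+x^2)\sqf{1,a,b}$, and the lemma reduces to proving $\sqf{1,c,bc} \simeq \sqf{1,a,bx^2+ab}$.

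To establish this isometry, I would argue at the level of represented elements: both forms have dimension~$3$, so they are isometric as soon as $\spn_{F^2}\{1,a,bx^2+ab\} = \spn_{F^2}\{1,c,bc\}$. The inclusion $\subseteq$ is automatic from $a \in \spn_{F^2}\{1,c,bc\}$ and from the computation above. For the reverse inclusion I would split into two cases depending on whether $b\in F^2(a)$. In the case $b\notin F^2(a)$, since $a+x^2$ is a nonzero element of the field $F^2(a)$, the element $b(a+x^2)$ is not in $F^2(a)$ either, so $\{1,a,bx^2+ab\}$ is $F^2$-independent and its span is $3$-dimensional; one also verifies that $\sqf{1,c,bc}$ must itself be anisotropic here (otherwise its span would be the $2$-dimensional field $F^2(a)$, giving $bc\in F^2(a)$ and hence $b = bc/c\in F^2(a)$, a contradiction), so both spans are $3$-dimensional and therefore coincide.

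In the case $b\in F^2(a)$, I would show that $c$ must also lie in $F^2(a)$: writing $b = p + qa$ with $q\in F^2\setminus\{0\}$ one has $\spn_{F^2}\{1,c,bc\} = \spn_{F^2}\{1,c,ac\}$, and if $c\notin F^2(a)$ (equivalently $a\notin F^2(c)$) then expressing $a = \alpha + \beta c + \gamma ac$ with $\alpha,\beta,\gamma\in F^2$ yields $a = (\alpha+\beta c)/(1+\gamma c) \in F^2(c)$, contradicting $c\notin F^2(a)$. Hence $1,a,b(a+x^2),c,bc$ all lie in the $2$-dimensional field $F^2(a) = \spn_{F^2}\{1,a\}$, and both spans in question coincide with $\spn_{F^2}\{1,a\}$. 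The step I expect to be the main obstacle is this last one: deducing $c\in F^2(a)$ from the mere representability hypothesis, which requires the careful change-of-basis manipulation inside $\spn_{F^2}\{1,c,ac\}$ sketched above.
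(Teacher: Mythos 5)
Your proof is correct, and it arrives at the same value $s=x$ and invokes the same key tool (Lemma~\ref{Lem:3dimSim}), but it organizes the verification quite differently from the paper. The paper writes $a = x^2 + cy^2 + bcz^2$ and splits into cases $z=0$ and $z\neq 0$; in each case it produces an explicit chain of isometries (scaling slots by squares, adding one slot to another) carrying $\sqf{1,c,bc}$ to $(a+x^2)\sqf{1,a,b}$. You instead reduce the whole problem to a single equality of $F^2$-subspaces $\spn_{F^2}\{1,c,bc\} = \spn_{F^2}\{1,a,b(a+x^2)\}$ -- which is legitimate, since two totally singular forms of equal dimension with the same value set $D_F$ have isometric anisotropic parts and hence are isometric -- and then split on whether $b\in F^2(a)$. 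Your case split is more field-theoretic where the paper's is more computational, and the argument in your second case (extracting $c\in F^2(a)$ from the representability hypothesis) is genuinely more delicate than anything the paper needs, but it is correct: the change of basis $\spn_{F^2}\{1,c,bc\}=\spn_{F^2}\{1,c,ac\}$ uses $q\neq 0$, the denominator $1+\gamma c$ is nonzero because $c\notin F^2$, and $a\in F^2(c)$ with $a\notin F^2$ forces $F^2(a)=F^2(c)$. On balance your version trades the paper's short explicit manipulations for a somewhat longer but more structural argument about value sets; both are valid, and neither is obviously preferable.
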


\begin{proof}
Let $x,y,z\in F$ be such that
\begin{equation} \label{Eq:3dimRewr_aDef}
a=x^2+cy^2+bcz^2.
\end{equation}

Suppose first that $z=0$; then $a\in D_F(\sqf{1,c})$, and we have $\sqf{1,c}\simeq\sqf{1,a}$ by Lemma~\ref{Lem:2dimCong}. Moreover, since $a\notin F^2$, it must be $y\neq0$, and \eqref{Eq:3dimRewr_aDef} can be rewritten as $c=\left(\frac{x}{y}\right)^2+a\left(\frac{1}{y}\right)^2$. Putting these together, we obtain 
\[
\sqf{1,c,bc}
\simeq\left\langle1,a,b\bigl(\left(\tfrac{x}{y}\right)^2+a\left(\tfrac{1}{y}\right)^2\bigr)\right\rangle
\simeq\sqf{1,a,b(x^2+a)};
\] 
hence, $\sqf{1,c,bc}\simeq\left(a+x^2\right)\sqf{1,a,b}$ by Lemma \ref{Lem:3dimSim}.

Now let $z\neq0$; then 
\[\sqf{1,c,bc}\simeq\sqf{1,c,x^2+cy^2+bcz^2}\simeq\sqf{1,a,c}.\]
Moreover, note that $z\neq0$ and $b\notin F^2$ imply $y^2+bz^2\neq0$; hence, we can rewrite \eqref{Eq:3dimRewr_aDef} as
\[ c(y^2+bz^2)^2= (a+x^2)(y^2+bz^2)=(xy)^2+ay^2+b(xz)^2+abz^2.\]
Thus, we have
\[
\sqf{1,a,c}
\simeq\sqf{1,a,b(xz)^2+abz^2}
\simeq (a+x^2)\sqf{1,a,b},
\]
where the latter isometry follows from Lemma \ref{Lem:3dimSim}. Therefore, we get $\sqf{1,c,bc}\simeq(a+x^2)\sqf{1,a,b}$ in this case, too.
\end{proof}

The following lemma mimics the situation we end up with after applying Proposition~\ref{Prop:tsqf_MainTool}.

\begin{lemma} \label{Lem:2dimSubform}
Let $\psi$ be a totally singular quadratic form over $F$ such that $1\in D_F(\psi)$. Let $a,c\in F^*$ be such that $c\sqf{1,a}$ is anisotropic over $F$, and suppose $c\sqf{1,a}\subseteq\psi$. Then 
\begin{enumerate}
	\item either $\sqf{1,c,ac}\subseteq\psi$ (this occurs if and only if $1\notin D_F(c\sqf{1,a})$),
	\item or $\sqf{1,a}\subseteq\psi$ and $c\in D_F(\sqf{1,a})$.
\end{enumerate}
\end{lemma}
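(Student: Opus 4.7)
The plan is to translate the containment $c\sqf{1,a}\subseteq\psi$ together with $1\in D_F(\psi)$ into membership statements in $D_F(\psi)$, and then to apply Lemma~\ref{Lem:p-subform}(ii) either to $\sqf{1,c,ac}$ (case (i)) or to $\sqf{1,a}$ (case (ii)). From the hypotheses one immediately has $1,c,ac\in D_F(\psi)$; since $c\sqf{1,a}\simeq\sqf{c,ac}$ is anisotropic, $\{c,ac\}$ is $F^2$-linearly independent, so $\sqf{1,c,ac}$ fails to be anisotropic precisely when $1\in\spn_{F^2}\{c,ac\}=D_F(c\sqf{1,a})$. In the contrary case, $\sqf{1,c,ac}$ is itself anisotropic and Lemma~\ref{Lem:p-subform}(ii) directly delivers case (i).

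For case (ii) I assume $1\in D_F(c\sqf{1,a})$, so there exist $x,y\in F$, not both zero, with $1=cx^2+acy^2=c(x^2+ay^2)$. A preliminary observation is that anisotropy of $\sqf{c,ac}$ forces $a\notin F^2$ (if $a=s^2$, then $c$ and $ac=s^2c$ would be $F^2$-linearly dependent); hence $\sqf{1,a}$ is anisotropic and $w:=x^2+ay^2$ is nonzero. I would then rewrite
\[
c=\frac{1}{w}=\frac{w}{w^2}=\left(\frac{x}{w}\right)^2+a\left(\frac{y}{w}\right)^2,
\]
exhibiting $c\in D_F(\sqf{1,a})$, which is one of the two conclusions of case (ii).

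For the other conclusion, $\sqf{1,a}\subseteq\psi$, Lemma~\ref{Lem:p-subform}(ii) reduces the task to $a\in D_F(\psi)$. From the displayed formula, setting $u=x/w$ and $v=y/w$, I would split on whether $v=0$: if $v=0$ then $c=u^2\in F^2$ and $a=(1/u)^2\cdot ac$ is an $F^2$-scalar multiple of $ac\in D_F(\psi)$; if $v\neq 0$ then $a=(1/v)^2\cdot c+(u/v)^2\cdot 1$ is an $F^2$-linear combination of $c$ and $1$. Since $D_F(\psi)$ is an $F^2$-vector space containing $1$, $c$, and $ac$, in either subcase $a\in D_F(\psi)$.

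I expect the main obstacle to be case (ii): one must produce $a\in D_F(\psi)$ from data that only directly records $c$ and $ac$, which is why the detour through $c\in D_F(\sqf{1,a})$ is needed. The crucial manoeuvre is the rearrangement $c=1/w=w/w^2$, using that $w^2$ is a square to place $c$ back into $D_F(\sqf{1,a})$---a computation natural to the setting of totally singular quadratic forms, where $D_F$ is an $F^2$-vector space.
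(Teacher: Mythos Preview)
Your argument is correct. Case~(i) matches the paper's proof exactly: one checks that $\sqf{1,c,ac}$ is anisotropic precisely when $1\notin D_F(c\sqf{1,a})$ and then invokes Lemma~\ref{Lem:p-subform}.

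In case~(ii) you take a more computational route than the paper. The paper observes that $\sqf{1,a}=\pf{a}$ is a quasi-Pfister form, so $D_F(\sqf{1,a})=G_F(\sqf{1,a})$ by Lemma~\ref{Lemma:PFandPN}(i); since $D_F(\sqf{1,a})=F^2(a)$ is a field, $1\in D_F(c\sqf{1,a})=cD_F(\sqf{1,a})$ gives $c\in D_F(\sqf{1,a})=G_F(\sqf{1,a})$ immediately, whence $\sqf{1,a}\simeq c\sqf{1,a}\subseteq\psi$ in one stroke. You instead verify $c\in D_F(\sqf{1,a})$ by the explicit inversion $c=1/w=w/w^2$, and then separately show $a\in D_F(\psi)$ by a case split on $v=0$. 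Your approach avoids appealing to roundness of quasi-Pfister forms and works entirely at the level of the $F^2$-vector space $D_F(\psi)$; the paper's approach is shorter and more structural but relies on that extra input. Both are perfectly valid here.
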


\begin{proof}
If $1\notin D_F(c\sqf{1,a})$, then $\sqf{1,a,ca}$ is anisotropic, and so $\sqf{1,c,ca}\subseteq\psi$ by Lemma~\ref{Lem:p-subform}. On the other hand, if $1\in D_F(c\sqf{1,a})$, then it follows that $c\in D_F(\sqf{1,a})=G_F(\sqf{1,a})$; thus, $\sqf{1,a}\simeq c\sqf{1,a}\subseteq\psi$.
\end{proof}

\bigskip

Now we prove that (weakly) Vishik equivalent minimal quadratic forms are always similar. The proof is based mainly on the $2$-independence of the coefficients as explored in Lemma~\ref{Lemma:pforms_Minimal_Repr}.

\begin{theorem} \label{Th:TSQF_Minimal}
Let $\varphi, \psi$ be totally singular quadratic forms over $F$ such that $\varphi_{\an}$ is minimal over $F$. If $\varphi\simvw\psi$, then $\varphi\simsim\psi$.
\end{theorem}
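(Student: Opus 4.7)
The plan is to reduce to the anisotropic minimal case, use weak Vishik equivalence to extract a common two-dimensional subform of $\varphi$ and $\psi$, and then construct the similarity scalar via the module structure of the shared norm field. By Lemma~\ref{Lemma:WeakVishikBasicProperties}, $\varphi_{\an}\simvw\psi_{\an}$; by Corollary~\ref{Cor:WeakVishikAndMinimality}, $\psi_{\an}$ is also minimal; and by Proposition~\ref{Prop:WeakVishikSameNormField}, $N:=N_F(\varphi)=N_F(\psi)$. Since matching dimensions and defects reduce similarity of $\varphi,\psi$ to similarity of their anisotropic parts, I may assume $\varphi,\psi$ are both anisotropic and minimal of dimension $n+1$; after a preliminary scaling, I further take $1\in D_F(\varphi)\cap D_F(\psi)$ and write $\psi\simeq\sqf{1,b_1,\dots,b_n}$ with $\{b_1,\dots,b_n\}$ a $2$-basis of $N$. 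The cases $n\le 1$ are handled directly: all nonzero $1$-dimensional forms are similar, and for $n=1$ Lemma~\ref{Lem:2dimCong} gives $\varphi\simeq\psi$ since $\varphi\simeq\sqf{1,a}$ with $a\in F^2(b_1)=D_F(\sqf{1,b_1})$.

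For $n\ge 2$, I first produce a common $2$-dimensional subform. Applying weak Vishik equivalence at $E=F(\sqrt{b_1})$: because $b_1\in E^2$ and $\{b_2,\dots,b_n\}$ remains $2$-independent over $E$, we have $\iql{\psi_E}=1$, hence $\iql{\varphi_E}=1$. Proposition~\ref{Prop:tsqf_MainTool} yields $c_1\in F^*$ with $c_1\sqf{1,b_1}\subseteq\varphi$, and Lemma~\ref{Lem:2dimSubform} gives two cases. Either $\sqf{1,b_1}\subseteq\varphi$ already, or $\sqf{1,c_1,c_1b_1}\subseteq\varphi$; in the latter case, replacing $\varphi$ by the similar form $c_1^{-1}\varphi$ (still minimal by Lemma~\ref{Lem:PropertiesOfMinimalForms_pforms} and still weakly Vishik equivalent to $\psi$) brings us to the former. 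Thus, up to similarity, $\varphi\simeq\sqf{1,b_1,d_2,\dots,d_n}$ with $\{b_1,d_2,\dots,d_n\}$ a $2$-basis of $N$.

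The final step is to construct $c\in F^*$ with $cD_F(\psi)=D_F(\varphi)$ as $F^2$-subspaces of $N$; Lemma~\ref{Lem:p-subform} and a dimension count then give $c\psi\simeq\varphi$. Setting $K:=F^2(b_1)$, the field $N$ is a $K$-vector space with basis $\{b^\mu:\mu\in\{0,1\}^{n-1}\}$, where $b^\mu=\prod_{i\ge 2}b_i^{\mu(i)}$. Expanding $d_i=\sum_\mu r_{i,\mu}b^\mu$ with $r_{i,\mu}\in K$ and using the uniqueness of representation in the minimal form $\varphi$ (Lemma~\ref{Lemma:pforms_Minimal_Repr}), the constraint $cb_1\in D_F(\varphi)$ forces $c\in K^*$; since $cK=K$, matching $cD_F(\psi)$ and $D_F(\varphi)$ then reduces to matching the $F^2$-spans $F^2\{cb_2,\dots,cb_n\}$ and $F^2\{d_2,\dots,d_n\}$ in $N/K$. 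For $n=2$ the construction is immediate: the decomposition $N=K\oplus Kb_2$ gives $d_2=r_{2,0}+r_{2,e_2}b_2$ with $r_{2,e_2}\in K^*$ (nonzero, else $d_2\in K$ contradicts $\{b_1,d_2\}$ being a $2$-basis), and $c:=r_{2,e_2}$ works.

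The main obstacle is the case $n\ge 3$: two minimal forms sharing $\sqf{1,b_1}$ and having the same norm field need not be similar in general (for example, $\sqf{1,b_1,b_2b_3,b_3}$ vs.\ $\sqf{1,b_1,b_2,b_3}$ are not similar, although they share all these data). Such pairs must be ruled out by weak Vishik equivalence at further extensions like $F(\sqrt{b_1b_2b_3})$, where the defects actually differ. The technical core of the proof is therefore to invoke weak Vishik systematically at extensions $F(\sqrt{a})$ for $a$ ranging over suitable products and sums of the $b_i$'s, extracting enough vanishing and linear conditions on the coefficients $r_{i,\mu}$ to pin down a valid scalar $c\in K^*$ that matches the two subspaces for arbitrary $n$.
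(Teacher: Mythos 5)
Your reduction to the anisotropic minimal case, the norm-field identification, the extraction of a common $2$-dimensional subform $\sqf{1,b_1}$ via Proposition~\ref{Prop:tsqf_MainTool} and Lemma~\ref{Lem:2dimSubform}, and the explicit $n\le 2$ cases are all correct and essentially mirror the opening moves of the paper's argument. However, there is a genuine gap: the case $n\ge 3$, which is where all the real work lies, is not proved. You yourself flag it as ``the main obstacle'' and ``the technical core,'' and then describe only a program --- ``invoke weak Vishik systematically at extensions $F(\sqrt{a})$ for $a$ ranging over suitable products and sums of the $b_i$'s, extracting enough vanishing and linear conditions'' --- without carrying out any of the required computations or showing that the conditions you would extract actually suffice to pin down a scalar $c$. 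As it stands, this is an outline for a proof rather than a proof.

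The paper's argument makes two structural choices that your sketch does not match and that are essential. First, before treating the remaining slots it does not stop at the $2$-dimensional common subform $\sqf{1,a_1}$ but pushes on to obtain (up to similarity) a common \emph{$3$-dimensional} subform $\sqf{1,a_1,a_2}\subseteq\psi$; this is Step (1), and it already requires Lemma~\ref{Lem:3dimRewr} plus a contradiction argument built on the unique expression of $d_2(a_1+a_2)$ with respect to a minimal $2$-basis of $\psi$ (Lemma~\ref{Lemma:pforms_Minimal_Repr}). Second, for each $k\ge 3$, Step (2) uses \emph{three} subforms simultaneously --- $c_k\sqf{1,a_k}$, $d_k\sqf{1,a_1+a_k}$, and $e_k\sqf{1,a_2+a_k}$ --- and shows, via two long coefficient-matching computations against carefully chosen $2$-bases ($\mathcal{B}_{(2A)}$, $\mathcal{B}_{(2Bi)}$, $\mathcal{B}_{(2Bii)}$), that the scalar $c_k$ is forced to be a square, hence $a_k\in D_F(\psi)$ directly rather than merely after some rescaling by an as-yet-unknown $c$. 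Your proposal instead aims to solve for a single global scalar $c\in K^*=F^2(b_1)^*$ matching $c\,D_F(\psi)$ with $D_F(\varphi)$; you would need to show that such a $c$ exists and that the constraints are consistent, which is precisely where your sketch stops. In particular you give no argument that the ``vanishing and linear conditions on the $r_{i,\mu}$'' are sufficient, nor do you rule out the case analogous to the paper's subcase (2Bii), where the relevant element $c_ka_k$ is $2$-independent from the chosen basis elements and the contradiction has to be derived separately. Until those computations are supplied, the theorem is not proved for $n\ge 3$.
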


\begin{proof}
Invoking Lemma~\ref{Lemma:WeakVishikBasicProperties} (and as obviously $\varphi\simsim\psi$ if and only if $\varphi_\an\simsim\psi_\an$), we can assume that the forms $\varphi$ and $\psi$ are anisotropic. By Lemma~\ref{Lem:PropertiesOfMinimalForms_pforms}, we can suppose that $\varphi\simeq\sqf{1,a_1,\dots,a_n}$ for some $\{a_1,\dots,a_n\}\subseteq F$ that is $2$-independent over $F$; it follows that $N_F(\varphi)=F^2(a_1,\dots,a_n)$, and \[\mBc{(0)}=\{a_1,\dots,a_n\}\] is a $2$-basis of $N_F(\varphi)$ over~$F$. Moreover, $N_F(\varphi)=N_F(\psi)$ by Proposition~\ref{Prop:WeakVishikSameNormField}, and $\psi$ is minimal over~$F$ by Corollary~\ref{Cor:WeakVishikAndMinimality}.

\smallskip

We start with an observation:
\begin{equation}\label{Eq:MainArgue}
\parbox{0.85\textwidth}{\indent \textit{For any $a\in D_F(\varphi)\setminus F^2$, the form $\varphi_{F(\sqrt{a})}$ is isotropic. As~${\varphi\simvw\psi}$, $\psi$ must be isotropic over $F(\sqrt{a})$ as well. By Proposition~\ref{Prop:tsqf_MainTool}, we can find $c\in F^*$ such that $c\sqf{1,a}\subseteq\psi$.}}
\tag{{\smaller[2]{\faEye}}}
\end{equation}
The main idea is to look at such $ca$ for some $a\in D_F(\varphi)$, and express it with respect to an appropriate $2$-basis of $N_F(\psi)$ over $F$. Applying Lemma~\ref{Lemma:pforms_Minimal_Repr}, we get that almost all coefficients must be zero. Via a combination of different values of $a$, we usually end up with the conclusion that $c$ must be a square, which means that $a\in D_F(\psi)$. In particular, we will prove that there is a scalar multiple of $\psi$ which represents all the values $1, a_1,\dots, a_n$.  

\smallskip

We divide the proof into several steps and cases. To simplify the notation and omit multiple indices, we use the same letters repeatedly -- the meaning of $s_i,u_i,x_i,y_i,z_i$ changes in each subcase (although they are usually used in similar situations). On the other hand, the meaning of $a_i,c_i,d_i, e_i$ is \uv{global}, i.e., does not change during the proof. Moreover, we consider only $2$-independence and $2$-bases over $F$, and so we omit to repeat \uv{over $F$} each time. We also would like to recall the notation from Subsection~\ref{Subsec:p-bases}: By a \uv{unique expression with respect to a $2$-basis} we actually mean the unique expression with respect to the corresponding \mbox{$F^2$-linear} basis.

\bigskip

(1) First of all, note that \eqref{Eq:MainArgue} proves the claim completely if $\dim\psi=2$. Therefore, suppose $\dim\psi\geq3$. 

As the first step, we will prove that $\psi$ contains a subform similar to $\sqf{1,a_1,a_2}$: By \eqref{Eq:MainArgue}, we find $c_1\in F^*$ such that $c_1\sqf{1,a_1}\subseteq\psi$. Since we are interested in $\psi$ only up to similarity, we can assume without loss of generality that $c_1=1$. Now, $c_2\sqf{1,a_2}\subseteq\psi$ for some $c_2\in F^*$. By Lemma~\ref{Lem:2dimSubform}, there are two possibilities: Either $\sqf{1,a_2}\subseteq\psi$ or $\sqf{1,c_2,c_2a_2}\subseteq\psi$. 

\smallskip

(1A) If $\sqf{1,a_2}\subseteq\psi$, we get $\sqf{1,a_1,a_2}\subseteq\psi$ immediately (by Lemma~\ref{Lem:p-subform}). 

\smallskip

(1B) Suppose that $\sqf{1,c_2,c_2a_2}\subseteq\psi$ holds. We have to further distinguish two cases, depending on whether $a_1$ is represented by the form $\sqf{1,c_2,c_2a_2}$ or not.

\smallskip

(1Bi)  Assume that $a_1\in D_F(\sqf{1,c_2,c_2a_2})$. Then $\sqf{1,c_2,c_2a_2}\simsim\sqf{1,a_1,a_2}$ by Lemma~\ref{Lem:3dimRewr}.

\smallskip

(1Bii) Let $a_1\notin D_F(\sqf{1,c_2,c_2a_2})$; then $\sqf{1,a_1,c_2,c_2a_2}$ is an anisotropic subform of $\psi$, and hence $\psi\simeq\sqf{1,a_1,c_2,c_2a_2,s_4,\dots,s_n}$ for some suitable $s_4,\dots, s_n\in F^*$. Since $\psi$ is minimal, the set 
\[\mBc{(1Bii)}=\{a_1,c_2,c_2a_2,s_4,\dots,s_n\}\]
is $2$-independent, and hence it is a $2$-basis of $N_F(\psi)$. Since $a_2\in N_F(\psi)$, the element $a_2$ has a unique expression with respect to $\mBc{({1Bii})}$: 
\begin{equation}\label{Eq:a_2InTheCase1Bii}
a_2=c_2\cdot c_2a_2\cdot \left(\frac{1}{c_2}\right)^2.
\end{equation}
It follows by Lemma~\ref{Lemma:pforms_Minimal_Repr} that $a_2\notin D_F(\psi)$.

Furthermore, as $a_1+a_2\in D_F(\varphi)\setminus F^2$, we use \eqref{Eq:MainArgue} to find $d_2\in F^*$ such that $d_2\sqf{1,a_1+a_2}\subseteq\psi$. In particular, $d_2\in D_F(\psi)$; hence,
\[d_2=x_0^2+a_1\cdot x_1^2+c_2\cdot x_2^2+c_2a_2\cdot x_3^2+\sum_{i=4}^ns_i\cdot x_i^2\]
for some suitable $x_0,\dots,x_n\in F$. Multiplying $d_2$ by $(a_1+a_2)$ and using \eqref{Eq:a_2InTheCase1Bii} (i.e., expressing $d_2(a_1+a_2)$ with respect to the basis $\mBc{({1Bii})}$), we get
\begin{align*}
d_2(a_1+a_2)&
=(a_1x_1)^2
+a_1\cdot x_0^2
+c_2\cdot (a_2x_3)^2
+c_2a_2\cdot x_2^2\\
&\quad+a_1\cdot c_2\cdot x_2^2
+a_1\cdot c_2a_2\cdot x_3^2
+\sum_{i=4}^na_1\cdot s_i\cdot x_i^2
+c_2\cdot c_2a_2\cdot \left(\frac{x_0}{c_2}\right)^2\\
&\quad+a_1\cdot c_2\cdot c_2a_2\cdot \left(\frac{x_1}{c_2}\right)^2
+\sum_{i=4}^nc_2\cdot c_2a_2\cdot s_i\cdot \left(\frac{x_i}{c_2}\right)^2.
\end{align*}
We know that $d_2(a_1+a_2)$ is represented by $\psi$; but that is impossible by Lemma~\ref{Lemma:pforms_Minimal_Repr} unless all the terms composed from at least two elements of $\mBc{({1Bii})}$ (i.e., all but the first four) are zero. It follows $x_i=0$ for all $0\leq i\leq n$; hence, $d_2=0$ which is absurd. Therefore, this case cannot happen at all.

\smallskip

We can conclude step (1): We have proved that $\psi$ has a subform similar to $\sqf{1,a_1,a_2}$. If $\dim\psi=3$, then there is nothing more to prove. From now on, we will assume that $\dim\psi\geq4$ and $\sqf{1,a_1,a_2}\subseteq\psi$.

\bigskip

(2) Now let $k\in\{3,\dots,n\}$. By \eqref{Eq:MainArgue}, we find $c_k,d_k,e_k\in F^*$ such that $c_k\sqf{1,a_k}$, $d_k\sqf{1,a_1+a_k}$ and $e_k\sqf{1,a_2+a_k}$ are subforms of $\psi$. The case distinction is slightly different than in (1); here it depends on whether $c_k$ is represented by $D_F(\sqf{1,a_1,a_2})$ or not.

\smallskip

(2A) Assume $c_k\in D_F(\sqf{1,a_1,a_2})$. Then 
\begin{equation}\label{Eq:c_kInCaseTwoA}
c_k=u_0^2+a_1\cdot u_1^2+a_2\cdot u_2^2
\end{equation}
for some $u_0,u_1,u_2\in F$ (here we slightly abuse the notation; technically, $u_0, u_1,u_2$ depend on $k$), and so
\[c_ka_k=a_k\cdot u_0^2+a_1\cdot a_k\cdot u_1^2+a_2\cdot a_k\cdot u_2^2.\]
By the uniqueness of the expression of $c_ka_k$ with respect to $\mBc{(0)}$, it follows by Lemma~\ref{Lemma:pforms_Minimal_Repr} that $c_ka_k\notin D_F(\sqf{1,a_1,a_2})$. On the other hand, we know that $c_ka_k\in D_F(\psi)$; therefore, $\psi\simeq\sqf{1,a_1,a_2,c_ka_k,s_4,\dots,s_n}$ for some $s_4,\dots, s_n\in F^*$ (possibly different from the $s_i$'s in case (1Bii) above), and 
\[\mBc{({2A})}=\{a_1,a_2,c_ka_k,s_4,\dots,s_n\}\] 
is a $2$-basis of $N_F(\psi)$ by the minimality of $\psi$. Obviously, $a_k=c_k\cdot c_ka_k\cdot c_k^{-2}$; rewriting $c_k$ via \eqref{Eq:c_kInCaseTwoA}, we get the unique expression of $a_k$ with respect to $\mBc{({2A})}$:
\begin{equation} \label{Eq:a_kInCaseTwoA}
a_k=c_ka_k\cdot \left(\frac{u_0}{c_k}\right)^2+a_1\cdot c_ka_k\cdot \left(\frac{u_1}{c_k}\right)^2+a_2\cdot c_ka_k\cdot \left(\frac{u_2}{c_k}\right)^2.
\end{equation}
Furthermore, we have $d_k\in D_F(\psi)$, and hence 
\[
d_k=x_0^2+a_1\cdot x_1^2+a_2\cdot x_2^2+c_ka_k\cdot x_3^2+\sum_{i=4}^ns_i\cdot x_i^2
\]
for suitable $x_0,\dots,x_n\in F$ (again, we omit to express the dependence on $k$). Multiplying $d_k$ by $(a_1+a_k)$ and using \eqref{Eq:a_kInCaseTwoA}, we can express $d_k(a_1+a_k)$ with respect to $\mBc{({2A})}$ as follows:
\begin{align*}
d_k(a_1+a_k)
&=(a_1x_1+a_ku_0x_3)^2
+a_1\cdot (x_0+a_ku_1x_3)^2
+a_2\cdot (a_ku_2x_3)^2 \\
&\quad+c_ka_k\cdot \left(\frac{u_0}{c_k}x_0+\frac{a_1u_1}{c_k}x_1+\frac{a_2u_2}{c_k}x_2\right)^2 
+a_1\cdot a_2\cdot x_2^2\\
&\quad+a_1\cdot c_ka_k\cdot \left(\frac{u_1}{c_k}x_0+\frac{u_0}{c_k}x_1+x_3\right)^2
+\sum_{i=4}^na_1\cdot s_i\cdot x_i^2 \\
&\quad+a_2\cdot c_ka_k\cdot \left(\frac{u_2}{c_k}x_0+\frac{u_0}{c_k}x_2\right)^2
+\sum_{i=4}^nc_ka_k\cdot s_i\cdot \left(\frac{u_0}{c_k} x_i\right)^2 \\
&\quad+a_1\cdot a_2\cdot c_ka_k\cdot \left(\frac{u_2}{c_k}x_1+\frac{u_1}{c_k}x_2\right)^2
+\sum_{i=4}^na_1\cdot c_ka_k\cdot s_i\cdot \left(\frac{u_1}{c_k} x_i\right)^2 \\
&\quad+\sum_{i=4}^na_2\cdot c_ka_k\cdot s_i\cdot \left(\frac{u_2}{c_k} x_i\right)^2 
\end{align*}
Similarly as before, since $d_k(a_1+a_k)\in D_F(\psi)$ and $\psi$ is minimal, it follows from Lemma~\ref{Lemma:pforms_Minimal_Repr} that all the \uv{composed} terms, i.e., all the terms except for the first four, must be zero. In particular:
\begin{itemize}
	\item The coefficients by $c_ka_k\cdot s_i$ and $a_1\cdot c_ka_k\cdot s_i$ and $a_2\cdot c_ka_k\cdot s_i$ are zero for each $4\leq i\leq n$; since at least one of $u_0$, $u_1$ and $u_2$ is nonzero, we get that $x_i=0$ for each $4\leq i\leq n$.
	\item The coefficient by $a_1\cdot a_2\cdot$ equals zero; thus, $x_2=0$.
	\item The coefficient by $a_2\cdot c_ka_k$ must be zero, i.e., $\frac{u_2}{c_k}x_0+\frac{u_0}{c_k}x_2=0$. As $x_2=0$, we get $u_2x_0=0$.
	\item The coefficient by $a_1\cdot a_2\cdot c_ka_k$ is zero, so $\frac{u_2}{c_k}x_1+\frac{u_1}{c_k}x_2=0$. Again, as $x_2=0$, it must hold $u_2x_1=0$.
	\item The coefficient by $a_1\cdot c_ka_k$ must be zero, thus $x_3+\frac{u_1}{c_k}x_0+\frac{u_0}{c_k}x_1=0$.
\end{itemize} 
If $u_2\neq0$, then $x_0=x_1=0$, and in that case also $x_3=0$ (by the last bullet point); but then $d_k=0$, which is absurd. Therefore, $u_2=0$.

We proceed analogously for $e_k$ and $e_k(a_2+a_k)$, only now we know $u_2=0$, which means that
\[c_k=u_0^2+a_1\cdot u_1^2 \qquad \text{ and } \qquad a_k=c_ka_k\cdot \left(\frac{u_0}{c_k}\right)^2+a_1\cdot c_ka_k\cdot \left(\frac{u_1}{c_k}\right)^2.\]
Since $e_k\in D_F(\psi)$, we have
\[e_k=y_0^2+a_1\cdot y_1^2+a_2\cdot y_2^2+c_ka_k\cdot y_3^2+\sum_{i=4}^ns_i\cdot y_i^2\]
for some $y_o,\dots,y_n\in F$, and
\begin{align*}
e_k(a_2+a_k)
=& \left(a_2y_2+c_ka_k\frac{u_0}{c_k}y_3\right)^2
+a_1\cdot \left(c_ka_k\frac{u_1}{c_k}y_3\right)^2
+a_2\cdot y_0^2\\
&+c_ka_k\cdot \left(\frac{u_0}{c_k}y_0+a_1\frac{u_1}{c_k}y_1\right)^2
 +a_1\cdot a_2\cdot y_1^2\\
&+a_1\cdot c_ka_k\cdot \left(\frac{u_1}{c_k}y_0+\frac{u_0}{c_k}y_1\right)^2
+a_2\cdot c_ka_k\cdot \left(y_3+\frac{u_0}{c_k}y_2\right)^2\\
& +\sum_{i=4}^na_2\cdot s_i\cdot y_i^2
+\sum_{i=4}^nc_ka_k\cdot s_i\cdot \left(\frac{u_0}{c_k}y_i\right)^2\\
&+a_1\cdot a_2\cdot c_ka_k\cdot \left(\frac{u_1}{c_k}y_2\right)^2
+\sum_{i=4}^na_1\cdot c_ka_k\cdot s_i\cdot \left(\frac{u_1}{c_k}y_i\right)^2.
\end{align*}
Again, all the coefficients by the composed terms must be zero, so in particular:
\begin{itemize}
	\item $y_i=0$ for all $4\leq i\leq n$ because of the coefficients by $a_2\cdot s_i$.
	\item $y_1=0$ because of the coefficient by $a_1\cdot a_2$.
	\item $u_1y_0=0$ because of the coefficient by $a_1\cdot c_ka_k$.
	\item $u_1y_2=0$ because of the coefficient by $a_1\cdot a_2 \cdot c_ka_k$.
	\item $y_3+\frac{u_0}{c_k}y_2=0$ because of the coefficient by $a_2\cdot c_ka_k$.
\end{itemize}
If $u_1\neq0$, then necessarily $y_0=0$ and $y_2=0$, which implies $y_3=0$; it would follow $e_k=0$, which is absurd. Thus, we have $u_1=0$. Therefore, $c_k=u_0^2$, and we have $\sqf{1,a_k}\subseteq\psi$; in particular, $a_k\in D_F(\psi)$.

\smallskip

(2B) Suppose $c_k\notin D_F(\sqf{1,a_1,a_2})$; then $\sqf{1,a_1,a_2,c_k}\subseteq\psi$. Here we distinguish two subcases:

\smallskip

(2Bi) Let $c_ka_k\in D_F(\sqf{1,a_1,a_2,c_k})$; then
\[c_ka_k=u_0^2+a_1\cdot u_1^2+a_2\cdot u_2^2+c_k\cdot u_3^2\]
for some $u_0,\dots,u_3\in F$, and hence
\[a_k=u_3^2+c_k\cdot \left(\frac{u_0}{c_k}\right)^2+a_1\cdot c_k\cdot \left(\frac{u_1}{c_k}\right)^2+a_2\cdot c_k\cdot \left(\frac{u_2}{c_k}\right)^2.\] 
We have 
\[\psi\simeq\sqf{1,a_1,a_2,c_k,s_4,\dots,s_n}\] 
for some $s_4,\dots,s_n\in F$; then
\[\mBc{({2Bi})}=\{a_1,a_2,c_k,s_4,\dots,s_n\}\] 
is a $2$-basis of $N_F(\psi)$. As before, we consider the unique representations of  $d_k$ and $e_k$ by $\psi$,
and express $d_k(a_1+a_k)$ and $e_k(a_2+a_k)$ with respect to $\mBc{({2Bi})}$. We obtain that $u_1=u_2=0$; therefore, $a_k=u_3^2+c_k\left(\frac{u_0}{c_k}\right)^2$, so in particular $a_k\in D_F(\psi)$.

\smallskip

(2Bii) If $c_ka_k\notin D_F(\sqf{1,a_1,a_2,c_k})$, then we have $\dim\psi\geq5$ and
\[\psi\simeq\sqf{1,a_1,a_2,c_k,c_ka_k,s_5,\dots,s_n}\] 
for some $s_5,\dots,s_n\in F$, and the corresponding $2$-basis of $N_F(\psi)$ is 
\[\mBc{({2Bii})}=\{a_1,a_2,c_k,c_ka_k,s_5,\dots,s_n\}.\] 
Again, we consider the unique representation of $d_k$ by $\psi$. In this case, we express $a_k$ with respect to $\mBc{({2Bii})}$ as
\[a_k=c_k\cdot c_ka_k\cdot \left(\frac{1}{c_k}\right)^2.\]
This time, the consideration of the element $d_k(a_1+a_k)$ with respect to $\mBc{({2Bii})}$ already implies $d_k=0$; that is absurd, and hence this case cannot happen.

\bigskip

(3) We have proved that, up to multiplying $\psi$ by a constant from $F^*$, we have $\sqf{1,a_1,a_2}\subseteq\psi$ (step (1)), and $a_k\in D_F(\psi)$ for all $3\leq k\leq n$ (step~(2)). Invoking Lemma~\ref{Lem:p-subform}, we get $\varphi\simsim\psi$.
\end{proof}

%----------------------------------------------------------------------------------------
\subsection{Special quasi-Pfister neighbors}

The definition of a special quasi-Pfister neighbor is motivated by its counterpart in characteristic different from 2, which appeared in the paper \cite{AO95}.

\begin{definition}
We call a totally singular quadratic form $\varphi$ over $F$ a \emph{special quasi-Pfister neighbour} if $\varphi\simsim\pi\ort b\sigma$ with $\pi$ a quasi-Pfister form over $F$, $b\in F^*$ and $\sigma\subseteq\pi$. In such situation, we also say that $\varphi$ is given by the triple $(\pi,b,\sigma)$.
\end{definition}

For a totally singular quadratic form $\varphi$ over $F$, we define its \emph{full splitting pattern} as
\[\fsp{\varphi}=\{\dim(\varphi_E)_\an~|~E/F \text{ a field extension}\}.\]

We will describe all special quasi-Pfister neighbors of dimension up to eleven and their full splitting pattern. In particular, we will see that all quasi-Pfister neighbors up to dimension eight are special. First, we recall a proposition from \cite{HL04}.

\begin{lemma}[{\cite[Prop.~8.12]{HL04}}]\label{Lemma:PNsmalldim_TSQF}
Let $\varphi$ be an anisotropic totally singular quadratic form over $F$ with $\dim\varphi\leq8$. Then $\varphi$ is a quasi-Pfister neighbor if and only if 
\begin{enumerate}
		\item $\dim\varphi\leq3$, or 
		\item $\dim\varphi=2^n$ for some $n\geq1$ and $\varphi$ is similar to an $n$-fold quasi-Pfister form, or
		\item there exist $x,y,z\in F^*$ such that
			\begin{enumerate}
				\item $\varphi\simsim\sqf{1,x,y,xy,z}$ in the case of $\dim\varphi=5$,
				\item $\varphi\simsim\sqf{1,x,y,xy,z,xz}$ in the case of $\dim\varphi=6$,
				\item $\varphi\simsim\sqf{x,y,z,xy,xz,yz,xyz}$ in the case of $\dim\varphi=7$.
			\end{enumerate}
\end{enumerate}
\end{lemma}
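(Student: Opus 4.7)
I would dispose of the reverse direction first by direct verification: any form of dimension $\leq 3$ is a quasi-Pfister neighbor (a dim-$1$ form is similar to $\sqf{1}$; a dim-$2$ anisotropic form is similar to some $\pf{a}$; and an anisotropic $\sqf{1,a,b}$ sits inside $\pf{a,b}$ with $3 > 2$), an $n$-fold quasi-Pfister form is its own quasi-Pfister neighbor, and each of the three explicit shapes in (iii) embeds into $\pf{x,y,z}$ with dimension strictly greater than $\tfrac12 \cdot 8 = 4$.

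For the forward direction, the extreme dimensions yield to a pigeonhole on $\dim\pi$: for $\dim\varphi \leq 3$ there is nothing to prove, and for $\dim\varphi \in \{4,8\}$ the constraints $\dim\varphi \leq \dim\pi < 2\dim\varphi$ combined with $\dim\pi$ being a power of $2$ force $\dim\pi = \dim\varphi$, so $\varphi$ is similar to an $n$-fold quasi-Pfister form with $n=2$ or $3$ and case (ii) applies.

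The substantive cases $\dim\varphi \in \{5,6,7\}$ all yield $\dim\pi = 8$ by the same pigeonhole, so $\pi \simeq \pf{x,y,z}$ is anisotropic. My plan is to transfer the problem to combinatorics on the group $G = D_F^*(\pi)/F^{*2}$, which is elementary abelian of order $8$ (hence isomorphic to $\F_2^3$) and whose eight elements are represented by the eight monomials of $\pi$. The subform $c\varphi \subseteq \pi$ corresponds to a subset $S \subseteq G$ of size $\dim\varphi$. By Lemma~\ref{Lemma:PFandPN}\,(i), $G_F^*(\pi) = D_F^*(\pi)$, so rescaling $\varphi$ by any element of $D_F^*(\pi)$ keeps it a subform of $\pi$ and has the effect of translating $S$ within $G$; changing the generating triple $(x,y,z)$ further acts on $G$ via $\mathrm{GL}_3(\F_2)$. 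The task reduces to placing $S$ into the standard shape of (iii) under these two moves.

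The dimension-$7$ case is immediate: translate the unique missing element to the identity. The dimension-$6$ case follows by observing that the two missing nontrivial elements have some product $\ell \neq 1$; after translating $1$ into $S$, the nontrivial part of $S$ is precisely the union of the two $2$-dimensional subgroups of $G$ through $\ell$ other than the one containing the missing pair, and a basis change with $x = \ell$ yields $\sqf{1,x,y,xy,z,xz}$. The main obstacle is dimension $5$: after translating so $1 \in S$, either the four nontrivial elements of $S$ contain a $2$-dimensional subgroup (yielding $\sqf{1,x,y,xy,z}$ directly), or they form a $4$-arc in the Fano plane $G \setminus \{1\}$. A short argument shows that every $4$-arc in $\F_2^3 \setminus \{0\}$ is the complement of a line and hence equals $\{u,v,w,uvw\}$ for some basis $(u,v,w)$ of $G$; in this ``frame'' case the three missing elements $\{uv,uw,vw\}$ themselves form a subgroup (as $uv \cdot uw \cdot vw = 1$). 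My plan is to resolve this by translating $S$ by $u$ to obtain $\{1,u,uv,uw,vw\}$, whose nontrivial part now contains the subgroup $\{uv,uw,vw\}$, so the basis change $x = uv$, $y = uw$, $z = u$ delivers $c\varphi \simsim \sqf{1,x,y,xy,z}$. The delicate point is identifying the frame as the only obstruction, which boils down to the Fano-plane fact that the complement of any three $\F_2$-linearly independent points in $\F_2^3 \setminus \{0\}$ automatically contains a line.
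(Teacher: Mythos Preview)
This lemma is not proved in the paper; it is quoted from \cite[Prop.~8.12]{HL04}, so there is no in-paper argument to compare against. Your reverse direction and the pigeonhole for $\dim\varphi\in\{1,2,3,4,8\}$ are fine.

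The gap is in dimensions $5$, $6$, $7$. You assert that $G=D_F^*(\pi)/F^{*2}$ is elementary abelian of order $8$, but this is false: $D_F(\pi)=F^2(x,y,z)$ is a \emph{field}, and the quotient $D_F^*(\pi)/F^{*2}$ is typically infinite. For instance $1+x\in D_F^*(\pi)$ lies in no $F^{*2}$-coset of a monomial $x^iy^jz^k$ (any equation $1+x=c^2x^iy^jz^k$ forces a $2$-dependence among $x,y,z$). The eight monomials do give an embedding $\F_2^3\hookrightarrow D_F^*(\pi)/F^{*2}$, but an arbitrary anisotropic subform $c\varphi\simeq\sqf{a_1,\dots,a_k}\subseteq\pi$ need not have its entries in the image, so your correspondence between subforms and subsets $S\subseteq G$ does not exist as stated.

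This breaks all three combinatorial arguments, including the \uv{immediate} dimension-$7$ case. Witt cancellation fails for totally singular forms (e.g.\ $\sqf{1,x}\simeq\sqf{1,1+x}$ while $\sqf{x}\not\simeq\sqf{1+x}$), so from $c\varphi\ort\sqf{a}\simeq\pi$ you cannot read off $c\varphi$ up to similarity by \uv{translating} $a$ to $1$. Showing that $c\varphi$ admits a diagonalization by monomials in \emph{some} $2$-basis of $N_F(\pi)$ is essentially the content of the lemma itself; the argument in \cite{HL04}, and the related classification carried out in Proposition~\ref{Prop:LowDim} of the present paper, proceed by directly normalizing the entries of $\varphi$ via the $F^2$-linear structure of $D_F(\pi)$ rather than by a one-shot combinatorial reduction.
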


With the previous lemma, it is easy to classify all special quasi-Pfister neighbors of dimensions up to $8$: Any quasi-Pfister neighbor of dimension $2^n$ for some $n\geq0$ is similar to an $n$-fold quasi-Pfister form, and hence special. For the other small dimensions, we have
\begin{itemize}
	\item $\sqf{1,x,y}\simeq\pf{x}\ort y\sqf{1}$,
	\item $\sqf{1,x,y,xy,z}\simeq\pf{x,y}\ort z\sqf{1}$,
	\item $\sqf{1,x,y,xy,z,xz}\simeq\pf{x,y}\ort z\sqf{1,x}$,
	\item $\sqf{x,y,z,xy,xz,yz,xyz}\simeq xyz(\pf{x,y}\ort z\sqf{1,x,y})$;
\end{itemize}
therefore, we get the following corollary.

\begin{corollary} \label{Cor:PNsmalldimSPN_TSQF}
All anisotropic quasi-Pfister neighbors of dimensions up to $8$ are special.
\end{corollary}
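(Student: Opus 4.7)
The plan is a direct case analysis driven by Lemma~\ref{Lemma:PNsmalldim_TSQF}, which partitions all anisotropic quasi-Pfister neighbors of dimension at most $8$ into four families: (a) dimension $\leq 3$; (b) dimension $2^n$ with $\varphi$ similar to an $n$-fold quasi-Pfister form; and (c)-(e) the three explicit shapes listed in dimensions $5$, $6$, $7$. In each case it suffices to exhibit a triple $(\pi, b, \sigma)$ with $\sigma \subseteq \pi$ witnessing that $\varphi$ is a special quasi-Pfister neighbor.

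For family (b) the one-dimensional case $\varphi \simsim \sqf{1}$ is trivial ($\varphi \simsim \pi$ for the $0$-fold quasi-Pfister form); for $n \geq 1$, I would invoke the standard splitting
\[
\pf{a_1, \dots, a_n} \simeq \pf{a_1, \dots, a_{n-1}} \ort a_n \pf{a_1, \dots, a_{n-1}}
\]
and read off the triple $(\pi, b, \sigma) = (\pf{a_1, \dots, a_{n-1}}, a_n, \pf{a_1, \dots, a_{n-1}})$, for which $\sigma = \pi$ and hence $\sigma \subseteq \pi$ automatically. For family (a) with $\dim \varphi = 3$, I would write $\varphi \simsim \sqf{1, x, y} \simeq \pf{x} \ort y\sqf{1}$, which gives $(\pi, b, \sigma) = (\pf{x}, y, \sqf{1})$ with $\sqf{1} \subseteq \pf{x}$. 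For dimensions $5$, $6$, $7$ the representatives supplied by Lemma~\ref{Lemma:PNsmalldim_TSQF} decompose directly as
\[
\pf{x,y} \ort z\sqf{1}, \qquad \pf{x,y} \ort z\sqf{1,x}, \qquad xyz\bigl(\pf{x,y} \ort z\sqf{1,x,y}\bigr),
\]
and in each of these the second summand $\sqf{1}$, $\sqf{1,x} \simeq \pf{x}$, $\sqf{1,x,y}$ is visibly a subform of the quasi-Pfister part $\pf{x,y}$.

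There is essentially no obstacle: once Lemma~\ref{Lemma:PNsmalldim_TSQF} is in hand, the corollary reduces to pattern-matching the listed normal forms against the definition of a special quasi-Pfister neighbor, and the verification $\sigma \subseteq \pi$ is transparent in each exhibited triple via Lemma~\ref{Lem:p-subform} applied to the coefficient sets. The mild point worth flagging is that dimension $8$ is covered only by subfamily (b) (it is a power of two), so no case distinction beyond the four bullets of Lemma~\ref{Lemma:PNsmalldim_TSQF} is required.
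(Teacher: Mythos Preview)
Your proof is correct and follows essentially the same approach as the paper: both invoke Lemma~\ref{Lemma:PNsmalldim_TSQF} to reduce to the listed normal forms, and then exhibit the identical decompositions $\pf{x}\ort y\sqf{1}$, $\pf{x,y}\ort z\sqf{1}$, $\pf{x,y}\ort z\sqf{1,x}$, and $xyz(\pf{x,y}\ort z\sqf{1,x,y})$ for dimensions $3$, $5$, $6$, $7$, treating the $2^n$-dimensional case as immediate since such forms are similar to quasi-Pfister forms. Your write-up is slightly more explicit about the quasi-Pfister splitting and the verification $\sigma\subseteq\pi$, but the argument is the same.
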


\begin{proposition} \label{Prop:SPNsmalldim_TSQF}
Let $\varphi$ be an anisotropic special quasi-Pfister neighbor.
\begin{enumerate}
	\item If $\dim\varphi=3$, then we have $\varphi\simsim\pf{a}\ort d\sqf{1}$ for some $a,d\in F^*$ and ${\fullsplitpat{\varphi}=\{1,2,3\}}$.
	\item If $\dim\varphi=5$, then  $\varphi\simsim\pf{a,b}\ort d\sqf{1}$ for some $a,b, d\in F^*$ and $\fullsplitpat{\varphi}=\{1,2,3,4,5\}$.
	\item If $\dim\varphi=6$, then  $\varphi\simsim\pf{a,b}\ort d\sqf{1,a}$ for some $a,b, d\in F^*$ and $\fullsplitpat{\varphi}=\{1,2,3,4,6\}$.
	\item If $\dim\varphi=7$, then $\varphi\simsim\pf{a,b}\ort d\sqf{1,a,b}$ for some $a,b, d\in F^*$ and $\fullsplitpat{\varphi}=\{1,2,4,7\}$.
	\item If $\dim\varphi=9$, then $\varphi\simsim\pf{a,b,c}\ort d\sqf{1}$ for some $a,b,c, d\in F^*$ and $\fullsplitpat{\varphi}=\{1,2,3,4,5,8,9\}$.
	\item If $\dim\varphi=10$, then  $\varphi\simsim\pf{a,b,c}\ort d\sqf{1,a}$ for some ${a,b,c, d\in F^*}$ and $\fullsplitpat{\varphi}=\{1,2,3,4,5,6,8,10\}$.
	\item If $\dim\varphi=11$, then $\varphi\simsim\pf{a,b,c}\ort d\sqf{1,a,b}$ for some $a,b,c, d\in F^*$ and $\fullsplitpat{\varphi}=\{1,2,3,4,6,7,8,11\}$.
\end{enumerate}
\end{proposition}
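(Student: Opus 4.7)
The plan is to handle each of the seven items by a uniform two-step method: first derive the canonical similarity form of $\varphi$, then read off the full splitting pattern by counting coefficient classes modulo squares.

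For the first step, I write $\varphi\simsim\pi\ort b\sigma$ as in the definition, with $\pi$ a quasi-Pfister form, $b\in F^*$, and $\sigma\subseteq\pi$. Since $\dim\pi$ is a power of $2$ and $\dim\sigma\leq\dim\pi$, the equation $\dim\pi+\dim\sigma=\dim\varphi$ forces, in each listed dimension, a unique pair $(\dim\pi,\dim\sigma)$: namely $(2,1)$, $(4,1)$, $(4,2)$, $(4,3)$, $(8,1)$, $(8,2)$, $(8,3)$ for dimensions $3,5,6,7,9,10,11$ respectively. After rescaling $\sigma$ so that $1\in D_F(\sigma)$, I write $\sigma\simeq\sqf{1,c_1,\dots,c_k}$ with $c_i\in D_F(\pi)\setminus F^2$; the cases $k=0$ (dimensions $3,5,9$) give the claim at once. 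For $k\geq1$, anisotropy of $\sigma$ together with the inequality $\dim\sigma_{\an}\geq n+1$ in Lemma~\ref{Lemma:PrereqForNormForm} forces $\ndeg_F\sigma=2^k$, so $\sigma$ is minimal and $\{c_1,\dots,c_k\}$ is $2$-independent over $F$ by Lemma~\ref{Lem:PropertiesOfMinimalForms_pforms}(i). Since $F^2(c_1,\dots,c_k)\subseteq N_F(\pi)$, I extend $\{c_1,\dots,c_k\}$ to a $2$-basis of $N_F(\pi)$ via \cite[Cor.~A.8.9]{CSAandGC}, obtaining $\pi\simeq\pf{c_1,\dots,c_n}$ for $n\in\{1,2,3\}$. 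Relabeling the $c_i$'s to $a,b,c$ and the scalar $b$ to $d$ yields the claimed standard form.

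For the second step I invoke \cite[Lemma~5.1]{Hof04}: for $\varphi\simeq\sqf{\alpha_1,\dots,\alpha_N}$ and any extension $E/F$, the anisotropic part $(\varphi_E)_{\an}$ is isometric to a diagonal form on a subset of the $\alpha_i$'s whose classes modulo $E^2$ are distinct, so $\dim(\varphi_E)_{\an}$ equals the number of distinct cosets $\bar\alpha_i\in E/E^2$. For each standard form $\pf{a_1,\dots,a_m}\ort d\sqf{1,a_1,\dots,a_s}$, the set of coefficient classes in $E/E^2$ is the union $H\cup(\bar d+H')$, where $H=\spn_{\F_2}\{\bar a_1,\dots,\bar a_m\}$ and $H'=\spn_{\F_2}\{\bar a_1,\dots,\bar a_s\}$. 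A finite enumeration of $\F_2$-configurations of $(\bar a_1,\dots,\bar a_m,\bar d)$ in $E/E^2$ (which coordinates vanish, which linear relations among them hold) yields a definite cardinality of $H\cup(\bar d+H')$ in each subcase, and the resulting list of cardinalities matches the claimed $\fsp{\varphi}$ exactly. To realize each value $k$ in the claimed pattern I take an extension of the form $E_k=F(\sqrt{\alpha_{i_1}},\dots,\sqrt{\alpha_{i_r}})$ with a suitable subset of $\{a_1,\dots,a_m,d\}$, which zeroes the chosen classes in $E_k/E_k^2$ while keeping the remaining ones $\F_2$-independent; the omitted values (e.g.\ $5$ in dimension $6$, or $5,9,10$ in dimension $11$) are ruled out directly by the enumeration.

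The main obstacle is purely combinatorial bookkeeping in the four-variable cases (dimensions $9,10,11$), where the $\F_2$-dependency patterns of $\bar a,\bar b,\bar c,\bar d$ split into many subcases; dimensions $3$ through $7$ can be done by inspection. Conceptually no new ideas beyond roundness of quasi-Pfister forms, minimality of three-dimensional anisotropic subforms, and the coefficient-class count are required, so I expect the whole proof to reduce to a careful but mechanical tabulation.
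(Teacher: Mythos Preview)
Your first step (deriving the canonical similarity form) is correct and essentially the same as the paper's argument for dimensions $9$--$11$; for dimensions $\leq 8$ the paper simply invokes Lemma~\ref{Lemma:PNsmalldim_TSQF} and Corollary~\ref{Cor:PNsmalldimSPN_TSQF}, whereas you give a uniform direct argument, which is fine. One small quibble: to conclude that $\sigma$ is minimal you need \emph{both} inequalities of Lemma~\ref{Lemma:PrereqForNormForm}, not just the lower one, but for $\dim\sigma\in\{2,3\}$ this is immediate.

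The second step, however, contains a genuine error. Your key claim, that $\dim(\varphi_E)_{\an}$ equals the number of distinct square classes of the $\alpha_i$ in $E^*/(E^*)^2$, is false: distinct square classes are only a \emph{necessary} condition for $E^2$-linear independence, not a sufficient one. Concretely, take $\varphi=\sqf{1,a,d}$ (case~(i)) and pass to $E=F(\sqrt{a+d})$. Over $E$ one has $d=e^2+a$ with $e\in E$, so $d\in\spn_{E^2}\{1,a\}$ and $\dim(\varphi_E)_{\an}=2$; yet $1,a,d$ lie in three distinct square classes (since $a\notin E^2$, $d\notin E^2$, and $ad=ae^2+a^2\notin E^2$). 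The same phenomenon occurs in higher dimensions: for $\varphi=\pf{a,b}\ort d\sqf{1,a,b}$ and $E=F(\sqrt{a+b+1})$, the classes $1,\bar a,\bar b,\overline{ab}$ remain distinct in $E^*/(E^*)^2$ but $\pf{a,b}_E$ is isotropic, so your count overshoots $\dim(\varphi_E)_{\an}$. Consequently your enumeration of $\F_2$-configurations yields only an upper bound on $\dim(\varphi_E)_{\an}$, and you cannot use it to rule out the ``forbidden'' values (e.g.\ $3,5,6$ in case~(iv)).

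The paper handles the splitting patterns by citing \cite[Thm.~4.11]{KZ23-fsp}. If you want a self-contained argument, the right replacement for your square-class count is Lemma~\ref{Lemma:IsotropyIndicesSPN_pforms}: over any $E$, either $d\in D_E(\pi)$ and $\dim(\varphi_E)_{\an}=\dim(\pi_E)_{\an}\in\{1,2,4,\dots\}$, or $d\notin D_E(\pi)$ and $\dim(\varphi_E)_{\an}=\dim(\pi_E)_{\an}+\dim(\sigma_E)_{\an}$. Since $(\pi_E)_{\an}$ is again quasi-Pfister (Lemma~\ref{Lemma:PFandPN}) and $\sigma\subseteq\pi$ forces $\dim(\sigma_E)_{\an}$ to be determined by $\dim(\pi_E)_{\an}$ in each case, a short case analysis then gives the exact list.
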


\begin{proof}
All special quasi-Pfister neighbors up to dimension $8$ have been described by Lemma~\ref{Lemma:PNsmalldim_TSQF} and Corollary~\ref{Cor:PNsmalldimSPN_TSQF}. If $\varphi\simsim\pi\ort d\sigma$ is a special quasi-Pfister neighbor and $9\leq\dim\varphi\leq11$, then necessarily $\dim\pi=2^3$ and $1\leq\dim\sigma\leq3$. Recall that all totally singular quadratic forms of dimensions two or three are minimal; thus, without loss of generality, $\sqf{1}\subseteq\sigma\subseteq\sqf{1,a,b}$ for some $2$-independent set $\{a,b\}\subseteq F$. Then we can find $c\in F^*$ such that $\{a,b,c\}$ is a $2$-basis of $N_F(\pi)$ over $F$, and it follows that $\pi\simeq\pf{a,b,c}$.

The full splitting pattern of $\varphi$ follows directly from \cite[Thm.~4.11]{KZ23-fsp}.
\end{proof}

In the following example, we will show that not all quasi-Pfister neighbors are special.

\begin{example}
Let $\varphi\simeq\sqf{1,a,b,c,d,ab,ac,ad,bc}$ be a totally singular quadratic form such that $\{a,b,c,d\}\subseteq F$ is $2$-independent over $F$. Then $\varphi$ is a quasi-Pfister neighbor of $\pf{a,b,c,d}$.

On the other hand, we have
\[\varphi_{F(\sqrt{b})}\simeq\sqf{1,a,1,c,d,a,ac,ad,c}_{F(\sqrt{b})}\simeq\sqf{1,a,c,d,ac,ad,0,0,0}_{F(\sqrt{b})};\]
since $\sqf{1,a,c,d,ac,ad} \subseteq\pf{a,c,d}$ and $(\pf{a,c,d})_{F(\sqrt{b})}\simeq(\pf{a,b,c,d}_{F(\sqrt{b})})_\an$ is anisotropic, we get 
\[(\varphi_{F(\sqrt{b})})_{\an}\simeq\sqf{1,a,c,d,ac,ad}_{F(\sqrt{b})},\]
so we have in particular $6\in\fullsplitpat{\varphi}$. As the full splitting pattern of any $9$-dimensional special quasi-Pfister neighbor equals to $\{1,2,3,4,5,8,9\}$ by Proposition~\ref{Prop:SPNsmalldim_TSQF}, it follows that $\varphi$ cannot be any special quasi-Pfister neighbor.
\end{example}

\bigskip

Ideally, we would like to prove that if two totally singular forms are Vishik equivalent and at least one of them is a special quasi-Pfister neighbor, then they are similar. Unfortunately, we will need some additional assumptions. We start with a few lemmas.

\begin{lemma}[{\cite[Lemma~4.8]{KZ23-fsp}}]\label{Lemma:IsotropyIndicesSPN_pforms}
Let $\pi$ be a quasi-Pfister form over $F$, $\sigma\subseteq\pi$ and $d\in F^*$ be such that the totally singular quadratic form $\varphi\simeq\pi\ort d\sigma$ is anisotropic. Let $E/F$ be a field extension.
\begin{enumerate}
	\item If $d\in D_E(\pi)$, then we have $(\varphi_E)_{\an}\simeq(\pi_E)_{\an}$; in particular, $\iql{\varphi_E}=\iql{\pi_E}+\dim\sigma$.
	\item If $d\notin D_E(\pi)$, then $(\varphi_E)_{\an}\simeq(\pi_E)_{\an}\ort d(\sigma_E)_{\an}$; in particular, $\iql{\varphi_E}=\iql{\pi_E}+\iql{\sigma_E}$.
\end{enumerate}
\end{lemma}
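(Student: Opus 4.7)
The plan is to analyze $\varphi_E = \pi_E \ort d\sigma_E$ by working directly with the value sets, using the fact that for a quasi-Pfister form $\pi \simeq \pf{a_1,\dots,a_n}$, the set $D_E(\pi) = E^p(a_1,\dots,a_n)$ is a \emph{field} containing $E^p$, and moreover $D_E(\pi) = G_E(\pi)$ by Lemma~\ref{Lemma:PFandPN}(i). These are the two structural facts that drive both cases.

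For part~(i), assume $d \in D_E(\pi) = G_E(\pi)$. Then $d\pi_E \simeq \pi_E$, and since $\sigma \subseteq \pi$ we have $d\sigma_E \subseteq d\pi_E \simeq \pi_E$, which gives $D_E(d\sigma) \subseteq D_E(\pi)$. Hence $D_E(\varphi_E) = D_E(\pi_E) + D_E(d\sigma_E) = D_E(\pi_E)$, so by Lemma~\ref{Lemma:PickAnisp-subform} the anisotropic parts coincide: $(\varphi_E)_\an \simeq (\pi_E)_\an$. The defect count follows by subtracting dimensions, using that $\varphi$ is anisotropic over $F$ so $\dim\varphi = \dim\pi + \dim\sigma$.

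For part~(ii), assume $d \notin D_E(\pi)$. Set $\pi' = (\pi_E)_\an$ and $\sigma' = (\sigma_E)_\an$; I will show that $\pi' \ort d\sigma'$ is already anisotropic, so it must be $(\varphi_E)_\an$. Suppose instead that there is an isotropy relation, i.e.\ a nonzero $x \in D_E(\pi')$ and $y \in D_E(\sigma')$ with $x + dy = 0$ (in the sense of $p$-forms, coming from a nontrivial zero of $\pi' \ort d\sigma'$). Anisotropy of $\pi'$ and of $\sigma'$ forces both $x$ and $y$ to be nonzero. Since $\sigma \subseteq \pi$ we have $y \in D_E(\sigma) \subseteq D_E(\pi)$; also $x \in D_E(\pi') = D_E(\pi)$. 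Because $D_E(\pi)$ is a field, $d = x/y$ lies in $D_E(\pi)$, contradicting the hypothesis. Thus $\pi' \ort d\sigma'$ is anisotropic, and writing $\pi_E \simeq \pi' \ort \iql{\pi_E}\times\sqf{0}$ and $\sigma_E \simeq \sigma' \ort \iql{\sigma_E}\times\sqf{0}$ yields $(\varphi_E)_\an \simeq \pi' \ort d\sigma'$ together with the stated defect identity.

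The main potential obstacle is the anisotropy argument in part~(ii): one has to be sure that the relation $x + dy = 0$ one extracts from an isotropy of $\pi' \ort d\sigma'$ really forces \emph{both} $x$ and $y$ to be nonzero (this is where anisotropy of the two summands is used) and that the resulting quotient $x/y$ lands in $D_E(\pi)$ (this is where the field structure of $D_E(\pi)$, specific to quasi-Pfister forms, is essential). Everything else is a dimension count using that $\varphi$, being anisotropic over $F$, has $\dim\pi + \dim\sigma$ as its dimension.
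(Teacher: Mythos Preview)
Your argument is correct. The paper does not actually prove this lemma; it is quoted verbatim from \cite[Lemma~4.8]{KZ23-fsp} with no proof given here, so there is nothing in the present paper to compare against. Your approach---computing $D_E(\varphi)=D_E(\pi)+dD_E(\sigma)$ and exploiting that $D_E(\pi)$ is a field with $G_E(\pi)=D_E(\pi)$---is the natural one and is essentially how the result is established in the cited source. One minor wording issue: when you write ``a nonzero $x\in D_E(\pi')$ and $y\in D_E(\sigma')$'' you presumably mean that the underlying isotropy vector $(u,v)$ is nonzero, and then the anisotropy of $\pi'$ and $\sigma'$ forces $x=\pi'(u)$ and $y=\sigma'(v)$ to both be nonzero (as you say in the next sentence); this is clear from context but could be phrased more carefully.
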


We prove that if a quasi-Pfister neighbor of norm degree $2^n$ contains a quasi-Pfister form of dimension $2^{n-1}$, then it must be a special quasi-Pfister neighbor.

\begin{lemma} \label{Lemma:TSQF_SPNwithknownPF}
Let $\pi$ be an anisotropic quasi-Pfister form over $F$. Moreover, let $\psi$ be an anisotropic totally singular quadratic form over $F$ such that $\pi\subseteq\psi$ and $\nf_F(\psi)\simeq\pi\otimes\pf{b}$ for some $b\in F^*$. Then there exists a totally singular form $\rho\subseteq\pi$ such that $\psi\simeq\pi\ort b\rho$.
\end{lemma}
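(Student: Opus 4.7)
The plan is to decompose $\psi$ as $\pi \ort \sigma$ and then rearrange the $\sigma$-part into the form $b\rho$ with $\rho \subseteq \pi$, by exploiting how $D_F(\psi)$ sits inside the explicit $F^2$-vector space decomposition of $N_F(\psi)$. Since $\pi$ is an anisotropic subform of the anisotropic $\psi$, we may write $\psi \simeq \pi \ort \sigma$ for some anisotropic totally singular quadratic form $\sigma$ of dimension $m := \dim\psi - \dim\pi$, and the task reduces to exhibiting an isometry $\sigma \simeq b\rho$ with $\rho \subseteq \pi$.

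The starting point is the hypothesis $\nf_F(\psi) \simeq \pi \otimes \pf{b} \simeq \pi \ort b\pi$. As a norm form, this is anisotropic, so $b \notin N_F(\pi)$, and consequently
\[
N_F(\psi) \;=\; D_F(\pi) \oplus b\,D_F(\pi)
\]
as an $F^2$-vector space. Since $1 \in D_F(\pi) \subseteq D_F(\psi)$, one has $D_F(\psi) \subseteq N_F(\psi)$, so every $c \in D_F(\psi)$ admits a unique decomposition $c = x + by$ with $x, y \in D_F(\pi)$. The crucial observation is the following closure property: $x$ already lies in $D_F(\pi) \subseteq D_F(\psi)$, so in characteristic $2$ we also have $by = c + x \in D_F(\psi)$. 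Setting $U := D_F(\psi) \cap b\,D_F(\pi)$, this gives the $F^2$-linear decomposition $D_F(\psi) = D_F(\pi) + U$, which is direct because $D_F(\pi) \cap b\,D_F(\pi) = 0$; hence $\dim_{F^2} U = m$.

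To conclude, write $U = bW$ for an $F^2$-subspace $W \subseteq D_F(\pi)$ of dimension $m$, pick any $F^2$-basis $\{w_1, \dots, w_m\}$ of $W$, and set $\rho := \sqf{w_1, \dots, w_m}$. The $w_i$ are $F^2$-independent elements of the anisotropic $\pi$, so $\rho$ is anisotropic, and $\rho \subseteq \pi$ by Lemma~\ref{Lem:p-subform}. The form $\pi \ort b\rho$ is then anisotropic with $D_F(\pi \ort b\rho) = D_F(\pi) + U = D_F(\psi)$, so Lemma~\ref{Lemma:PickAnisp-subform} forces $\psi \simeq \pi \ort b\rho$, as required. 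The only nonroutine step is the closure observation $by \in D_F(\psi)$; it is what lets one genuinely extract a $b$-factor and reduce to comparing $F^2$-bases.
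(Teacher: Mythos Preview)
Your proof is correct. Both your argument and the paper's rest on the same observation: since $D_F(\psi)\subseteq N_F(\psi)=D_F(\pi)\oplus bD_F(\pi)$, every element of $D_F(\psi)$ splits uniquely into a $D_F(\pi)$-component and a $bD_F(\pi)$-component, and the first component can be absorbed into $\pi$.

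The difference is organizational. The paper fixes an arbitrary complement $\rho'=\sqf{d'_1,\dots,d'_s}$ with $\psi\simeq\pi\ort b\rho'$ and, for each $k$, writes $bd'_k=\pi(\boldxi_k)+b\pi(\boldzeta_k)$; it then replaces $d'_k$ by $d_k=\pi(\boldzeta_k)\in D_F(\pi)$ and checks that this preserves the isometry class one slot at a time. You instead work globally: you show $D_F(\psi)=D_F(\pi)\oplus U$ with $U=D_F(\psi)\cap bD_F(\pi)$, read off $\rho$ from an $F^2$-basis of $b^{-1}U\subseteq D_F(\pi)$, and invoke Lemma~\ref{Lemma:PickAnisp-subform} once to get the isometry. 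Your packaging is arguably cleaner (no induction, no intermediate isometries to verify), while the paper's is more hands-on; mathematically the two are equivalent.
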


\begin{proof}
First, recall that a norm form is always anisotropic; hence, $\pi\otimes\pf{b}$ is anisotropic, and we get by Lemma~\ref{Lem:quasiPFaddingSlot} that $b\notin N_F(\pi)=D_F(\pi)$.

Let $\rho'$ be a totally singular quadratic form over $F$ such that $\psi\simeq \pi\ort b\rho'$. Denote $s=\dim\rho'$ and $\rho'=\sqf{d'_1,\dots,d'_s}$. For each $k\in\{1,\dots,s\}$, we proceed as follows: Since $bD_F(\rho')\subseteq D_F(\psi)\subseteq D_F(\pi\otimes\pf{b})$, we can write 
\[bd'_k=\pi(\boldxi_k)+b\pi(\boldzeta_k)\] 
for some appropriate vectors $\boldxi_k,\boldzeta_k$. If $\boldzeta_k=0$, then $bd'_k=\pi(\boldxi_k)\in D_F(\pi)$, which contradicts the anisotropy of $\psi$; thus, the vector $\boldzeta_k$ must be nonzero. It follows that $\pi(\boldzeta_k)\neq0$, and so we set $d_k=\pi(\boldzeta_k)$; then
\[\pi\ort b\sqf{d'_k}\simeq\pi\ort\sqf{\pi(\boldxi_k)+bd_k}\simeq\pi\ort b\sqf{d_k}.\]
It follows that
\[\pi\ort b\sqf{d'_1,\dots,d'_k}\ort b\sqf{d_{k+1},\dots, d_s}\simeq \pi\ort b\sqf{d'_1,\dots,d'_{k-1}}\ort b\sqf{d_{k},\dots, d_s}\]
for any $1\leq k\leq s$. Therefore,
\[\pi\ort b\sqf{d'_1,\dots,d'_s}\simeq\pi\ort b\sqf{d_1,\dots, d_s},\]
where $\rho\simeq\sqf{d_1,\dots, d_s}$ is a subform of $\pi$, because $d_k\in D_F(\pi)$ for each $k$.
\end{proof}

\begin{proposition} \label{Prop:TSQF_SPN}
Let $\varphi,\psi$ be anisotropic totally singular quadratic forms over $F$ such that $\varphi\simvw\psi$.  Assume that $\varphi$ is a special quasi-Pfister neighbor given by a triple $(\pi, b,\sigma)$. Moreover, suppose that $c\pi\subseteq\psi$ for some $c\in F^*$. Then $\psi$ is a special quasi-Pfister neighbor given by a triple $(\pi,b,\rho)$ for some form $\rho$ over $F$ such that $\rho\simvw\sigma$.
\end{proposition}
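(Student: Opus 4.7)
The plan has three stages: compute $\nf_F(\varphi)$ and transfer to $\psi$, extract $\rho$ via Lemma~\ref{Lemma:TSQF_SPNwithknownPF}, and verify $\sigma\simvw\rho$ by a case split over each extension $F(\sqrt{a})$.

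First, I would observe that the anisotropy of $\varphi\simsim\pi\ort b\sigma$ together with $\sigma\subseteq\pi$ forces $b\notin D_F(\pi)=N_F(\pi)$: otherwise $b\in G_F(\pi)$, so $b\sigma\subseteq b\pi\simeq\pi$ and $\pi\ort b\sigma$ would be isotropic. Hence $\pi\otimes\pf{b}$ is an anisotropic quasi-Pfister form containing $\pi\ort b\sigma$. On the other hand, ratios of values of $\varphi$ visibly generate $N_F(\pi)$ and $b$, so $N_F(\varphi)\supseteq N_F(\pi)(b)=N_F(\pi\otimes\pf{b})$. Combining, $\nf_F(\varphi)\simeq\pi\otimes\pf{b}$, and Proposition~\ref{Prop:WeakVishikSameNormField} propagates this to $\nf_F(\psi)\simeq\pi\otimes\pf{b}$.

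Next, I would replace $\psi$ by the similar form $c^{-1}\psi$, which remains weakly Vishik equivalent to $\varphi$ via Lemma~\ref{Lemma:SimImpliesVishik}. Then $\pi\subseteq\psi$, and Lemma~\ref{Lemma:TSQF_SPNwithknownPF} produces $\rho\subseteq\pi$ with $\psi\simeq\pi\ort b\rho$; rescaling back yields $\psi\simsim\pi\ort b\rho$, so $\psi$ is a special quasi-Pfister neighbor with triple $(\pi,b,\rho)$. The equality $\dim\sigma=\dim\rho$ then drops out of $\dim\varphi=\dim\psi$.

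It remains to check $\iql{\sigma_E}=\iql{\rho_E}$ for every $E=F(\sqrt{a})$, using Lemma~\ref{Lemma:IsotropyIndicesSPN_pforms}. If $b\notin D_E(\pi)$, part~(ii) gives $\iql{\varphi_E}=\iql{\pi_E}+\iql{\sigma_E}$ and analogously for $\psi,\rho$, so $\varphi\simvw\psi$ immediately forces $\iql{\sigma_E}=\iql{\rho_E}$. If instead $b\in D_E(\pi)$, then from $D_E(\pi)=D_F(\pi)+aD_F(\pi)$ and $b\notin D_F(\pi)$ one cannot have $a\in D_F(\pi)=G_F(\pi)$ (that would collapse $D_E(\pi)$ back to $D_F(\pi)$). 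Hence $a\notin N_F(\pi)$, and since $\sigma,\rho\subseteq\pi$ give $N_F(\sigma),N_F(\rho)\subseteq N_F(\pi)$, Lemma~\ref{Lemma:p-forms_BasicIsotropy}(iii) keeps $\sigma_E$ and $\rho_E$ anisotropic, yielding $\iql{\sigma_E}=0=\iql{\rho_E}$.

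The delicate step is the subcase $b\in D_E(\pi)$: part~(i) of Lemma~\ref{Lemma:IsotropyIndicesSPN_pforms} shows that the defects of $\varphi_E$ and $\psi_E$ then depend on $\sigma,\rho$ only through their common dimension, so $\varphi\simvw\psi$ gives no direct comparison between $\iql{\sigma_E}$ and $\iql{\rho_E}$. The rescue is that the very condition $b\in D_E(\pi)\setminus D_F(\pi)$ pushes $a$ out of $N_F(\pi)$, which automatically preserves anisotropy of both $\sigma_E$ and $\rho_E$.
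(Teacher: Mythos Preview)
Your proposal is correct and follows essentially the same route as the paper's proof: compute $\nf_F(\varphi)\simeq\pi\otimes\pf{b}$, transfer it to $\psi$ via Proposition~\ref{Prop:WeakVishikSameNormField}, apply Lemma~\ref{Lemma:TSQF_SPNwithknownPF} to extract $\rho$, and then verify $\sigma\simvw\rho$ by a case analysis over $E=F(\sqrt{a})$. The only cosmetic difference is how the final case split is organized: the paper splits on whether $a\in N_F(\pi)$ (if not, $\pi_E$ and hence $\sigma_E,\rho_E$ stay anisotropic; if so, $D_E(\pi)=D_F(\pi)\not\ni b$ and Lemma~\ref{Lemma:IsotropyIndicesSPN_pforms}(ii) applies), whereas you split on whether $b\in D_E(\pi)$ and run the contrapositive implication $b\in D_E(\pi)\Rightarrow a\notin N_F(\pi)$. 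Both rest on the same key computation $D_E(\pi)=D_F(\pi)+aD_F(\pi)$ and are logically equivalent.
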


\begin{proof}
Let $d\in D_F(\sigma)$. Then $d\in D_F(\pi)=G_F(\pi)$; hence, $d\varphi\simeq\pi\ort b(d\sigma)$ and $1\in D_F(d\sigma)$. Therefore, we can assume that $1\in D_F(\sigma)$. Then, since $\varphi$ is anisotropic, we must have $b\notin D_F(\pi)$. It follows that $\nf_F(\varphi)\simeq\pi\otimes\pf{b}$. Since $\nf_F(\psi)\simeq\nf_F(\varphi)$ by Proposition~\ref{Prop:WeakVishikSameNormField}, we also have $\nf_F(\psi)\simeq\pi\otimes\pf{b}$. Finally, we can suppose that $\pi\subseteq\psi$. Therefore, we can apply Lemma~\ref{Lemma:TSQF_SPNwithknownPF} to find a totally singular quadratic form $\rho\subseteq\pi$ such that $\psi\simeq\pi\ort b\rho$. 

It remains to show that $\sigma\simvw\rho$: The equality $\dim\sigma=\dim\rho$ follows directly from $\dim\varphi=\dim\psi$. So, consider the field $E=F(\sqrt{a})$ for some $a\in F^*$. If $a\notin N_F(\pi)$, then $\pi_E$ is anisotropic by Lemma~\ref{Lemma:p-forms_BasicIsotropy}, and so are its subforms $\sigma_E$ and $\rho_E$; in particular, $\iql{\sigma_E}=\iql{\rho_E}$. On the other hand, if $a\in N_F(\pi)$, then $(\pi\otimes\pf{a})_\an\simeq\pi$ by Lemma~\ref{Lem:quasiPFaddingSlot}; together with Lemma~\ref{Lemma:p-forms_BasicIsotropy}, we obtain
\[D_E(\pi_E)=D_F(\pi\otimes\pf{a})=D_F(\pi).\]
It follows that $b\notin D_E(\pi_E)$, and so we get by Lemma~\ref{Lemma:IsotropyIndicesSPN_pforms} that
\[\iql{\varphi_E}=\iql{\pi_E}+\iql{\sigma_E}\qquad \text{and} \qquad \iql{\psi_E}=\iql{\pi_E}+\iql{\rho_E}.\]
Since $\iql{\varphi_E}=\iql{\psi_E}$ by the assumption, we get $\iql{\sigma_E}=\iql{\rho_E}$. 
\end{proof}

\begin{remark} 
With the same notation as in Proposition~\ref{Prop:TSQF_SPN}, we can consider a stronger assumption $\varphi\simv\psi$ and ask whether it implies $\sigma\simv\rho$. 

First, note that any field $E$ with $b\in D_E(\pi)$ is problematic: In this case, we have 
\[b\varphi_E\simeq b\pi_E\ort\sigma_E\simeq (\pi\ort\sigma)_E.\]
Hence, $(\varphi_E)_{\an}\simeq(\pi_E)_{\an}$, so we do not get any information about $\iql{\sigma_E}$.

We can still give a more specific characterization of the problematic fields: As in the proof of Proposition~\ref{Prop:TSQF_SPN}, we can assume that $1\in D_F(\sigma)$. First, let $T$ and $S$ be fields such that $F\subseteq T\subseteq S\subseteq E$, where $T/F$ is purely transcendental, $S/T$ is separable and $E/S$ is purely inseparable. 

If $b\in D_S(\pi)$, then $(\pi\ort\sqf{b})_S$ is isotropic, and hence $\pi\ort\sqf{b}$ must be isotropic over $F$. But that is impossible because $\pi\ort\sqf{b}\subseteq\pi\ort b\sigma$ and $\pi\ort b\sigma$ is anisotropic. Thus, $b\notin D_S(\pi)$.

Furthermore, isotropy is a finite problem; thus, we can construct a field $L$ with $S\subseteq L\subseteq E$ such that $L/S$ is finite, and we have $\iql{\sigma_L}=\iql{\sigma_E}$ and $\iql{\rho_L}=\iql{\rho_E}$. Then $L=S\bigl(\!\!\!\sqrt[\leftroot{-5}\uproot{4}2^{n_1}]{s_1},\dots,\!\!\!\sqrt[\leftroot{-5}\uproot{4}2^{n_k}]{s_k}\bigr)$ for some $k\geq0$, $s_i\in S$ and $n_i\geq1$. Set $K=S(\sqrt{s_1},\dots,\sqrt{s_k})$; then $\iql{\sigma_K}=\iql{\sigma_L}$ and $\iql{\rho_K}=\iql{\rho_L}$ by \cite[Thm.~3.3]{KZ23-fsp}. 

If $b\notin D_K(\pi)$, then we can apply Lemma~\ref{Lemma:IsotropyIndicesSPN_pforms} to get $\iql{\sigma_K}=\iql{\rho_K}$; then $\iql{\sigma_E}=\iql{\rho_E}$ by the construction of $K$, and we are done. Therefore, assume $b\in D_K(\pi)$.

Now we can construct a field $M$ such that $S\subseteq M\subseteq K$, it holds that $b\notin D_M(\pi)$, and for each $\ve\in K\setminus M$, we have $b\in D_{M(\ve)}(\pi)$. Then we have $(\varphi_M)_{\an}\simeq(\pi_M)_{\an}\ort b (\sigma_M)_{\an}$ and $(\psi_M)_{\an}\simeq(\pi_M)_{\an}\ort b (\rho_M)_{\an}$ and $\iql{\sigma_M}=\iql{\rho_M}$ by Lemma~\ref{Lemma:IsotropyIndicesSPN_pforms}. Thus, let $\varphi'$, $\psi'$, $\pi'$, $\sigma'$, $\rho'$ be forms over $F$ such that $\varphi'_M\simeq(\varphi_M)_\an$, $\psi'_M\simeq(\psi_M)_\an$, etc. In particular, $\pi'_M$ is a quasi-Pfister form by Lemma~\ref{Lemma:PFandPN}.

Let $\ve\in K\setminus M$, and $e\in S$ be such that $\ve^2=e$. Then we know 
\[b\in D_{M(\sqrt{e})}(\pi')\setminus D_M(\pi')=D_M(\pi'\otimes\pf{e})\setminus D_M(\pi');\]
in particular, $D_M(\pi')\subsetneq D_M(\pi'\otimes\pf{e})$, which is only possible if $(\pi'\otimes\pf{e})_M$ is anisotropic. It follows by Lemma~\ref{Lemma:p-forms_BasicIsotropy} that $\pi'_{M(\sqrt{e})}$ is anisotropic. In that case, its subforms $\sigma'_{M(\sqrt{e})}$, $\rho'_{M(\sqrt{e})}$ must be anisotropic, too. It follows that
\[\iql{\sigma_{M(\sqrt{e})}}=\iql{\sigma_M}=\iql{\rho_M}=\iql{\rho_{M(\sqrt{e})}}.\]

\smallskip

However, the field extension $K/M$ does not have to be simple, as we show in the following example: Let $\pi'_M\simeq\pf{a_1,\dots,a_n}_M$ for some $a_1,\dots,a_n\in F^*$, and assume that $n\geq2$. Set $K=M(\sqrt{b}, \sqrt{a_1+a_2b})$. It is easy to see that $\{b, a_1+a_2b\}$ is $2$-independent over $M$, and hence $[K:M]=4$. Since $K$ does not contain any element of degree four over $M$, it follows that $K/M$ is not simple. Now consider $\ve\in K\setminus M$; then
\[\ve=w+x\sqrt{b}+y\sqrt{b}\sqrt{a_1+a_2b}+z\sqrt{a_1+a_2b}\]
with $w,x,y,z\in M$, at least one of $x, y,z$ nonzero. Then
\[
D_{M(\ve)}(\pi')
= D_M(\pi'\otimes\pf{\ve^2})
= M^2(a_1,\dots,a_n,\ve^2)
\]
Considering $\ve^2$ as an element of $K^2/M^2(a_1,\dots,a_n)$, we get:
\begin{multline*}
\ve^2
=w^2+bx^2+a_1by^2+a_2(by)^2+a_1z^2+a_2bz^2
\equiv bx^2+a_1by^2+a_2bz^2 \\
= b(x^2+a_1y^2+a_2z^2)
\equiv b \mod M^2(a_1,\dots,a_n),
\end{multline*}
where we used that $x^2+a_1y^2+a_2z^2\neq0$ (this holds because $\{a_1,a_2\}$ is $2$-independent over $M$ and at least one of $x,y,z$ is nonzero by the assumption). Therefore, we have $D_{M(\ve)}(\pi')= M^2(a_1,\dots,a_n, b)$. In particular, $b\in D_{M(\ve)}(\pi')$ for any $\ve\in K\setminus M$, and so we cannot find any field $M'$ with $M\subsetneq M'\subseteq K$ such that $b\notin D_{M'}(\pi')$.
\end{remark}

With the notation and assumptions of Proposition~\ref{Prop:TSQF_SPN}, we know that $\sigma\simvw\rho$, and hence $N_F(\sigma)\simeq N_F(\rho)$ by Proposition~\ref{Prop:WeakVishikSameNormField}. To conclude this section, we prove $N_F(\sigma)\simeq N_F(\rho)$ by a different approach.

\begin{lemma} \label{Lemma:TSQF_SPNnormfield}
Let $\varphi=\pi\ort b\sigma$, $\psi=\pi\ort b \rho$ be anisotropic totally singular quadratic forms over $F$ with $\pi$ a quasi-Pfister form, $b\in F^*$ and $\sigma,\rho\subseteq\pi$. Moreover, suppose that $1\in D_F(\sigma)\cap D_F(\rho)$. If $\varphi\simv\psi$, then $N_F(\sigma)=N_F(\rho)$.
\end{lemma}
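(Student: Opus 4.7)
The plan is to apply Proposition~\ref{Prop:IsotropyAndNormField} with the role of $\varphi$ played by $\sigma$ and with $E=N_F(\rho)$; by the symmetric argument this will yield $N_F(\sigma)=N_F(\rho)$. Note that $\sigma$ is anisotropic as a subform of the anisotropic form $\pi$, so Proposition~\ref{Prop:IsotropyAndNormField} is indeed applicable, and all that is required is to verify the implication
\[
\iql{\sigma_{F(\sqrt{a})}}>0 \quad\Longrightarrow\quad a\in N_F(\rho) \qquad \text{for every }a\in F^*.
\]

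So fix $a\in F^*$ with $\sigma_{F(\sqrt{a})}$ isotropic. First, the anisotropy of $\sigma$ combined with Lemma~\ref{Lemma:p-forms_BasicIsotropy}\ref{Lemma:p-forms_BasicIsotropy_Part_NormField} forces $a\in N_F(\sigma)\subseteq N_F(\pi)=D_F(\pi)$, the latter equality because $\pi$ is quasi-Pfister. Since $a\in D_F(\pi)=G_F(\pi)$, we have $\pi\otimes\pf{a}\simeq\pi\ort a\pi\simeq\pi\ort\pi$, whose anisotropic part is simply $\pi$; Lemma~\ref{Lemma:p-forms_BasicIsotropy}\ref{Lemma:p-forms_BasicIsotropy_i} then gives
\[
D_{F(\sqrt{a})}(\pi)=D_F(\pi\otimes\pf{a})=D_F(\pi).
\]
Because $1\in D_F(\sigma)$, the form $\varphi=\pi\ort b\sigma$ represents $b$ off the first summand, so its anisotropy forces $b\notin D_F(\pi)$, and hence also $b\notin D_{F(\sqrt{a})}(\pi)$. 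Lemma~\ref{Lemma:IsotropyIndicesSPN_pforms}(ii) therefore applies to both $\varphi$ and $\psi$ over $F(\sqrt{a})$, giving
\[
\iql{\varphi_{F(\sqrt{a})}}=\iql{\pi_{F(\sqrt{a})}}+\iql{\sigma_{F(\sqrt{a})}}, \qquad \iql{\psi_{F(\sqrt{a})}}=\iql{\pi_{F(\sqrt{a})}}+\iql{\rho_{F(\sqrt{a})}}.
\]
The hypothesis $\varphi\simv\psi$ equates the two left-hand sides, so $\iql{\rho_{F(\sqrt{a})}}=\iql{\sigma_{F(\sqrt{a})}}>0$, meaning that $\rho_{F(\sqrt{a})}$ is isotropic. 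As $\rho\subseteq\pi$ is anisotropic, a final appeal to Lemma~\ref{Lemma:p-forms_BasicIsotropy}\ref{Lemma:p-forms_BasicIsotropy_Part_NormField} yields $a\in N_F(\rho)$, completing the verification.

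There is no substantial obstacle; the only point requiring attention is the split between parts (i) and (ii) of Lemma~\ref{Lemma:IsotropyIndicesSPN_pforms}, and this is handled once one observes that anisotropy of $\varphi$ together with $1\in D_F(\sigma)$ automatically places us in case (ii). Note that only the defects over simple purely inseparable quadratic extensions are used, so the same argument actually works under the weaker hypothesis $\varphi\simvw\psi$, consistent with Proposition~\ref{Prop:WeakVishikSameNormField}.
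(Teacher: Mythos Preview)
Your proof is correct. It differs from the paper's own proof of this lemma in a genuine way worth noting.

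The paper passes to a \emph{single compound} purely inseparable extension $K=F(\sqrt{c_1},\dots,\sqrt{c_k})$, where $\{c_1,\dots,c_k\}$ is a $2$-basis of $N_F(\sigma)$; over $K$ one has $(\sigma_K)_\an\simeq\sqf{1}$, and since $D_K(\pi_K)=D_F(\pi)$ still excludes $b$, Lemma~\ref{Lemma:IsotropyIndicesSPN_pforms} together with $\varphi\simv\psi$ forces $(\rho_K)_\an\simeq\sqf{1}$, whence $N_F(\rho)\subseteq N_F(\sigma)$ via Proposition~\ref{Prop:p-forms_NormfieldCharacterisation}. You instead work over \emph{simple} quadratic extensions $F(\sqrt{a})$ one at a time and feed the resulting implication into Proposition~\ref{Prop:IsotropyAndNormField}. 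Your route is essentially the argument of Proposition~\ref{Prop:TSQF_SPN} (restricted to $a\in N_F(\pi)$) composed with Proposition~\ref{Prop:WeakVishikSameNormField}, which is precisely the ``first approach'' the paper alludes to in the paragraph preceding the lemma. What you gain is that, as you observe, only the weak Vishik equivalence $\varphi\simvw\psi$ is used; the paper's compound extension $K$ genuinely requires the full $\varphi\simv\psi$. What the paper's approach buys is a self-contained argument that does not route through Proposition~\ref{Prop:IsotropyAndNormField}.
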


\begin{proof}
First, note that the anisotropy of $\varphi$ together with the assumption $1\in D_F(\sigma)$ implies that $b\notin D_F(\pi)$.

By Lemma~\ref{Lem:BasisSubform}, we can find $c_1,\dots,c_{s}\in F^*$ and $0\leq k\leq s$ such that $\sigma=\sqf{1,c_1,\dots,c_{s}}$ and $\{c_1,\dots,c_k\}$ is a $2$-basis of $N_F(\sigma)$ over $F$. Set $K=F(\sqrt{c_1},\dots,\sqrt{c_k})$. Then $(\sigma_K)_{\an}\simeq\sqf{1}$ by Proposition~\ref{Prop:p-forms_NormfieldCharacterisation}. Write $\pi\simeq\pf{a_1,\dots,a_n}$ and $\pi'\simeq(\pi_K)_{\an}$. Then
\begin{multline*}
D_K(\pi')=K^2(a_1,\dots,a_n)=F^2(c_1,\dots,c_k,a_1,\dots,a_n)\\
\stackrel{N_F(\sigma)\subseteq N_F(\pi)}{=}F^2(a_1,\dots,a_n)=D_F(\pi);
\end{multline*}
hence, $b\notin D_K(\pi')$, and so $(\varphi_K)_{\an}\simeq\pi'\ort b\sqf{1}$. We will show that this is isometric to $(\psi_K)_{\an}$: Obviously, $\pi'\subseteq (\psi_K)_{\an}$. From the Vishik equivalence of $\varphi$ and $\psi$ we know $\dim(\psi_K)_{\an}=\dim\pi'+1$. Set $\rho'\simeq(\rho_K)_{\an}$. Since $b\notin D_K(\pi')$, we get by Lemma~\ref{Lemma:IsotropyIndicesSPN_pforms} that $\pi'\ort b\rho'$ is anisotropic. It means that $\pi'\ort b\rho'\simeq(\psi_K)_{\an}$, and hence $\dim\rho'=1$. As $1\in D_F(\rho)\subseteq D_K(\rho')$, it follows that $\rho'\simeq\sqf{1}$. Consequently, again by Proposition~\ref{Prop:p-forms_NormfieldCharacterisation}, we get $N_F(\rho)\subseteq F^2(c_1,\dots,c_k)=N_F(\sigma)$. The other inclusion can be proved analogously. Therefore, $N_F(\sigma)=N_F(\rho)$.
\end{proof}

%-----------------------------------------------------------------------------
\subsection{Conclusion}

In this final part, we put together all our results on the Vishik equivalence of totally singular quadratic forms.

Before stating the main theorem, note that any anisotropic totally singular quadratic form of dimension less or equal to four is either minimal or it is similar to a quasi-Pfister form.

\begin{theorem} \label{Th:TSQF_SPNsummary}
Let $\varphi$, $\psi$ be totally singular quadratic forms over $F$ such that $\varphi\simvw\psi$. Assume that $\varphi$ is a special quasi-Pfister neighbor given by the triple $(\pi,b,\sigma)$, and $c\pi\subseteq\psi$ for some $c\in F^*$. Moreover, suppose that $\sigma$ is either a quasi-Pfister form, a quasi-Pfister neighbor of codimension one, or a minimal form. Then $\varphi\simsim\psi$.
\end{theorem}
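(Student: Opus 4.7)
The plan is to reduce the problem to the similarity of the ``$\sigma$-parts'' and then reassemble. By Lemma~\ref{Lemma:WeakVishikBasicProperties}(ii), I may assume $\varphi$ and $\psi$ are anisotropic. Since $\varphi$ is a special quasi-Pfister neighbor given by $(\pi,b,\sigma)$ and $c\pi\subseteq\psi$, Proposition~\ref{Prop:TSQF_SPN} produces a totally singular quadratic form $\rho$ over $F$ with $\rho\subseteq\pi$ and $\rho\simvw\sigma$ such that $\psi$ is a special quasi-Pfister neighbor given by $(\pi,b,\rho)$. Up to replacing $\varphi$ and $\psi$ by similar forms (which preserves both $\simvw$ and the desired conclusion $\simsim$), I would then assume $\varphi\simeq\pi\ort b\sigma$, $\psi\simeq\pi\ort b\rho$, and $1\in D_F(\sigma)\cap D_F(\rho)$; the last normalization is available because $D_F(\sigma),D_F(\rho)\subseteq D_F(\pi)=G_F(\pi)$, and the hypothesis on $\sigma$ (minimal, quasi-Pfister, or quasi-Pfister neighbor of codimension one) is invariant under such rescaling by an element of $G_F(\pi)$.

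Next, the hypothesis on $\sigma$ is exactly what is needed to promote $\sigma\simvw\rho$ to $\sigma\simsim\rho$: if $\sigma$ is minimal, Theorem~\ref{Th:TSQF_Minimal} applies directly; if $\sigma$ is a quasi-Pfister form or a quasi-Pfister neighbor of codimension one, Corollary~\ref{Cor:WeakVishikAndMinimality}(ii) applies. In either case there exists $e\in F^*$ with $\rho\simeq e\sigma$.

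Finally, I would observe that this similarity factor $e$ must lie in $G_F(\pi)$. From $1\in D_F(\rho)=e\cdot D_F(\sigma)$, pick $s\in D_F(\sigma)$ with $es=1$; by Lemma~\ref{Lemma:PFandPN}(i) we have $s\in D_F(\pi)=G_F(\pi)$, and since $G_F(\pi)$ is a field, $e=s^{-1}\in G_F(\pi)$, so $e\pi\simeq\pi$. Therefore
\[
e\varphi \;\simeq\; e\pi\ort eb\sigma \;\simeq\; \pi\ort b(e\sigma) \;\simeq\; \pi\ort b\rho \;\simeq\; \psi,
\]
and hence $\varphi\simsim\psi$.

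The substantive content is carried by Proposition~\ref{Prop:TSQF_SPN} (which matches the $\pi$-and-$b$ data of $\varphi$ and $\psi$) and by Theorem~\ref{Th:TSQF_Minimal} / Corollary~\ref{Cor:WeakVishikAndMinimality}(ii) (which upgrade weak Vishik equivalence of $\sigma$ and $\rho$ to similarity); the final reassembly is a brief calculation inside $G_F(\pi)$. I expect the only step requiring care to be the normalization $1\in D_F(\sigma)\cap D_F(\rho)$, but this mirrors the opening of the proof of Proposition~\ref{Prop:TSQF_SPN} and is routine.
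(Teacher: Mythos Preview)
Your proof is correct and follows essentially the same route as the paper's: invoke Proposition~\ref{Prop:TSQF_SPN} to get $\psi\simsim\pi\ort b\rho$ with $\sigma\simvw\rho$, upgrade to $\sigma\simsim\rho$ via Theorem~\ref{Th:TSQF_Minimal} or Corollary~\ref{Cor:WeakVishikAndMinimality}(ii), then show the similarity factor lies in $G_F^*(\pi)$ and reassemble. The only cosmetic difference is that the paper skips your normalization $1\in D_F(\sigma)\cap D_F(\rho)$ and instead argues directly: for any $a\in D_F^*(\sigma)$ one has $da\in D_F^*(d\sigma)=D_F^*(\rho)\subseteq G_F^*(\pi)$ and $a\in G_F^*(\pi)$, whence $d\in G_F^*(\pi)$.
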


\begin{proof}
By Proposition~\ref{Prop:TSQF_SPN}, there exists a totally singular quadratic form ${\rho\subseteq\pi}$ and $c'\in F^*$ such that $c'\psi\simeq\pi\ort b\rho$ and $\sigma\simvw\rho$. Then, by Corollary~\ref{Cor:WeakVishikAndMinimality}, resp. by Theorem~\ref{Th:TSQF_Minimal}, we have $\sigma\simsim\rho$. 

Let $d\in F^*$ be such that $\rho\simeq d\sigma$, and let $a\in D_F^*(\sigma)$. Then 
\[da\in D_F^*(d\sigma)=D_F^*(\rho)\subseteq D_F^*(\pi)=G_F^*(\pi).\]
As $a\in G_F^*(\pi)$ and $G_F^*(\pi)$ is a group, it follows that $d\in G_F^*(\pi)$. Hence,
\[d\varphi\simeq d\pi\ort b(d\sigma)\simeq\pi\ort b\rho\simeq c'\psi,\]
i.e., $\varphi\simsim\psi$.
\end{proof}

\begin{corollary}\label{Cor:VishikTSQFtotal}
Let $\varphi$, $\psi$ be totally singular quadratic forms over $F$ such that $\varphi\simv\psi$. Let $K/F$ be a field extension such that $K^2\simeq G_F(\varphi)$. Assume that $(\varphi_K)_\an$ is a special quasi-Pfister neighbor given by the triple $(\pi,b,\sigma)$, and that $c\pi\subseteq\psi_K$ for some $c\in K^*$. Moreover, suppose that $\sigma$ is either a quasi-Pfister form, a quasi-Pfister neighbor of codimension one, or a minimal form (over $K$). Then $\varphi\simsim\psi$.
\end{corollary}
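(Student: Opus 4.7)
The plan is to reduce to Theorem~\ref{Th:TSQF_SPNsummary} applied over the field $K$, and then transfer the resulting similarity back down to $F$ via Proposition~\ref{Prop:DividingByPF_pforms}. This is exactly the strategy outlined in the remark following Proposition~\ref{Prop:DividingByPF_pforms}, with Theorem~\ref{Th:TSQF_SPNsummary} now supplying the similarity over $K$ that the remark took as input.

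First I would reduce to the anisotropic case: Lemma~\ref{Lem:VishikWrtExt} transmits $\varphi\simv\psi$ to $\varphi_\an\simv\psi_\an$, and $\varphi\simsim\psi$ is equivalent to $\varphi_\an\simsim\psi_\an$, so I may assume $\varphi$ and $\psi$ are anisotropic. By Theorem~\ref{Thm:SimilarityFactorsVishik_pforms}, $G_F(\varphi)=G_F(\psi)=K^2$, whence $\simf_F(\varphi)\simeq\simf_F(\psi)\simeq\pi_0$ for a common quasi-Pfister form $\pi_0\simeq\pf{a_1,\dots,a_n}$ with $F^2(a_1,\dots,a_n)=G_F(\varphi)$. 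Writing $\varphi\simeq\pi_0\otimes\varphi'$ and $\psi\simeq\pi_0\otimes\psi'$ for suitable totally singular forms $\varphi',\psi'$ over $F$, I identify $K$ with $F(\sqrt{a_1},\dots,\sqrt{a_n})$, which is exactly the extension appearing in Proposition~\ref{Prop:DividingByPF_pforms}.

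Next, \cite[Prop.~5.7]{Hof04} ensures that $\varphi'_K$ and $\psi'_K$ are anisotropic, so $(\varphi_K)_\an\simeq\varphi'_K$ and $(\psi_K)_\an\simeq\psi'_K$. Applying Lemma~\ref{Lem:VishikWrtExt} to the extension $K/F$ gives $\varphi'_K\simv\psi'_K$, and in particular $\varphi'_K\simvw\psi'_K$. By hypothesis, $\varphi'_K\simeq(\varphi_K)_\an$ is a special quasi-Pfister neighbor given by the triple $(\pi,b,\sigma)$, and there exists $c\in K^*$ with $c\pi\subseteq\psi_K$. Since $\pi$ is anisotropic (as a summand of an anisotropic special quasi-Pfister neighbor), so is $c\pi$, and Lemma~\ref{Lem:p-subform} lifts the inclusion $c\pi\subseteq\psi_K$ to $c\pi\subseteq(\psi_K)_\an\simeq\psi'_K$. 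The hypothesis on $\sigma$ is precisely what is required to invoke Theorem~\ref{Th:TSQF_SPNsummary} over the base field $K$ for the pair $\varphi'_K,\psi'_K$; that theorem therefore yields $\varphi'_K\simsim\psi'_K$.

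Finally, since $\varphi\simeq\pi_0\otimes\varphi'$ and $\psi\simeq\pi_0\otimes\psi'$ are anisotropic over $F$ and $\varphi'_K\simsim\psi'_K$, Proposition~\ref{Prop:DividingByPF_pforms} concludes $\varphi\simsim\psi$. The main technical point will be the middle step: verifying that the hypotheses of Theorem~\ref{Th:TSQF_SPNsummary} transfer cleanly from $(\varphi_K)_\an$ and $\psi_K$ to the cofactors $\varphi'_K$ and $\psi'_K$, in particular ensuring that the subform $c\pi$ sits inside the anisotropic part $\psi'_K$ of $\psi_K$ rather than being partly absorbed into its defect.
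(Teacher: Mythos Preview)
Your proof is correct and follows essentially the same approach as the paper's own proof: reduce to anisotropic forms, use Theorem~\ref{Thm:SimilarityFactorsVishik_pforms} to factor out the common similarity form $\pi_0$, pass to $K$ via Lemma~\ref{Lem:VishikWrtExt}, apply Theorem~\ref{Th:TSQF_SPNsummary} to the cofactors over $K$, and descend via Proposition~\ref{Prop:DividingByPF_pforms}. The only cosmetic difference is that the paper cites \cite[Thm.~3.3]{KZ23-fsp} rather than \cite[Prop.~5.7]{Hof04} for the anisotropy of $\varphi'_K$ and $\psi'_K$, but both references do the job.
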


\begin{proof}
Without loss of generality, assume that $\varphi$ and $\psi$ are anisotropic. Denote ${\tau\simeq\simf_F(\varphi)}$, and let $a_1,\dots,a_n\in F^*$ be such that $\tau\simeq\pf{a_1,\dots,a_n}$. Then there exists a totally singular quadratic form $\varphi'$ over $F$ such that ${\varphi\simeq\tau\otimes\varphi'}$. Furthermore, as $G_F(\varphi)=G_F(\tau)=D_F(\tau)=F^2(a_1,\dots,a_n)$, we have ${K= F(\sqrt{a_1},\dots,\sqrt{a_n})}$.  By \cite[Thm.~3.3]{KZ23-fsp}, $\iql{\varphi'_K}=\iql{\tau\otimes\varphi'}=0$.  Therefore, $\varphi'_K$ is anisotropic, and we have $(\varphi_K)_\an\simeq\varphi'_K$.  Since $G_F(\varphi)\simeq G_F(\psi)$ by Theorem~\ref{Thm:SimilarityFactorsVishik_pforms}, we can use analogous arguments to find a form $\psi'$ over $F$ such that $\psi\simeq\tau\otimes\psi'$ and $\psi'_K\simeq(\psi_K)_\an$.

Since $\varphi\simv\psi$, we get by Lemma~\ref{Lem:VishikWrtExt} that $\varphi'_K\simv\psi'_K$. Since $c\pi\subseteq\psi_K$ and $\pi$ is anisotropic over $K$, it follows that $c\pi\subseteq\psi'_K$. Note that $\varphi'_K\simeq(\varphi_K)_\an$ is a special quasi-Pfister neighbor given by the triple $(\pi,b,\sigma)$. Hence, by Theorem~\ref{Th:TSQF_SPNsummary}, we have $\varphi'_K\simsim\psi'_K$. Finally, Proposition~\ref{Prop:DividingByPF_pforms} implies that $\varphi\simsim\psi$.
\end{proof}

\begin{remark}
Note that we need in the proof of Corollary~\ref{Cor:VishikTSQFtotal} that ${\varphi'_K\simvw\psi'_K}$. To be able to conclude that, we need the full strength of $\varphi\simv\psi$, it would not be sufficient to assume $\varphi\simvw\psi$.
\end{remark}

%-----------------------------------------------------------------------------

\bigskip

We would like to conclude the article with a determination of the smallest dimension in which the answer to Question~\ref{MQ} is not fully known. To do that, we first need to characterize totally singular quadratic forms of low dimension.

\begin{proposition} \label{Prop:LowDim}
Let $\varphi$ be a totally singular quadratic form over $F$. In each of the following cases, there exists a $2$-independent set $\{a,b,c,d,e\}$ over $F$ such that:
\begin{enumerate}
	\item\label{Prop:LowDim1} Let $\dim\varphi=1$ and $\ndeg_F\varphi=1$, then $\varphi\simsim\sqf{1}$.
	\item\label{Prop:LowDim2} Let $\dim\varphi=2$ and $\ndeg_F\varphi=2$, then $\varphi\simsim\sqf{1,a}$.
	\item\label{Prop:LowDim3} Let $\dim\varphi=3$ and $\ndeg_F\varphi=4$, then $\varphi\simsim\sqf{1,a,b}$. 
	\item\label{Prop:LowDim4} Let $\dim\varphi=4$;
			\begin{itemize}
				\item if $\ndeg_F\varphi=4$, then $\varphi\simsim\pf{a,b}$,
				\item if $\ndeg_F\varphi=8$, then $\varphi\simsim\sqf{1,a,b,c}$.
			\end{itemize}
	\item\label{Prop:LowDim5} Let $\dim\varphi=5$;
			\begin{itemize}
				\item if $\ndeg_F\varphi=8$, then $\varphi\simsim\pf{a,b}\ort c\sqf{1}$,
				\item if $\ndeg_F\varphi=16$, then $\varphi\simsim\sqf{1,a,b,c,d}$.
			\end{itemize}
	\item\label{Prop:LowDim6} Let $\dim\varphi=6$;
			\begin{itemize}
				\item if $\ndeg_F\varphi=8$, then $\varphi\simsim\pf{a,b}\ort c\sqf{1,a}$,
				\item if $\ndeg_F\varphi=16$, then there are two possibilities: either $\varphi\simsim\pf{a,b}\ort\sqf{c,d}$, or $\varphi\simsim\sqf{1,a,b,c,d, t+ad}$ for some $t\in F^2(a,b,c)$,
				\item if $\ndeg_F\varphi=32$, then $\varphi\simsim\sqf{1,a,b,c,d,e}$.
			\end{itemize}
\end{enumerate}
\end{proposition}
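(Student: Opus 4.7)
My plan is to handle each subcase separately, in all cases first applying Lemma~\ref{Lem:BasisSubform} to obtain a normalized presentation. In each subcase the listed normal form is manifestly anisotropic (all coefficients come from an explicitly $2$-independent set), and similarity preserves the defect, so I may assume throughout that $\varphi$ itself is anisotropic. After scaling $\varphi$ so that $1 \in D_F^*(\varphi)$, Lemma~\ref{Lem:BasisSubform} yields $\varphi \simeq \sqf{1, b_1, \dots, b_{d-1}}$ where $d = \dim \varphi$ and $\{b_1, \dots, b_k\}$ is a $2$-basis of $N_F(\varphi)$, with $2^k = \ndeg_F \varphi$.

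When $k = d - 1$---parts \ref{Prop:LowDim1}--\ref{Prop:LowDim3}, the second subcase of \ref{Prop:LowDim4}, the second subcase of \ref{Prop:LowDim5}, and the third subcase of \ref{Prop:LowDim6}---the form $\varphi$ is minimal, and Lemma~\ref{Lem:PropertiesOfMinimalForms_pforms}(i) guarantees that $\{b_1, \dots, b_{d-1}\}$ is $2$-independent, which is exactly the claimed normal form. When instead $\dim \varphi \le 2^k$ and $\dim \varphi > \tfrac{1}{2}\cdot 2^k$---the first subcases of \ref{Prop:LowDim4}, \ref{Prop:LowDim5}, and \ref{Prop:LowDim6}---the form $\varphi$ is a quasi-Pfister neighbor of $\nf_F(\varphi) \simeq \pf{b_1, \dots, b_k}$, and the classification in Lemma~\ref{Lemma:PNsmalldim_TSQF} delivers $\pf{a,b}$, $\pf{a,b}\ort c\sqf{1}$, and $\pf{a,b}\ort c\sqf{1,a}$ respectively, with the $2$-independent data read off the chosen $2$-basis.

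The only subcase requiring genuinely new work is the middle subcase of \ref{Prop:LowDim6}, namely $\dim \varphi = 6$, $\ndeg_F \varphi = 16$, where $6 < \tfrac{1}{2}\cdot 16$ blocks Lemma~\ref{Lemma:PNsmalldim_TSQF}. Here I write $\varphi \simeq \sqf{1,a,b,c,d,e}$ with $\{a,b,c,d\}$ a $2$-basis of $N_F(\varphi)$ and $e \in F^2(a,b,c,d) \setminus D_F(\sqf{1,a,b,c,d})$, and use the identity $\sqf{x,y} \simeq \sqf{x,x+y}$ from Lemma~\ref{Lem:p-subform} to strip off the $F^2$-linear part of $e$, so that $e$ becomes an $F^2$-linear combination of monomials $\prod_{i \in S} x_i$ with $|S| \ge 2$ in $\{a,b,c,d\}$. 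I then split into two cases depending on whether some scalar multiple $c\varphi$ admits a Pfister subform $\pf{x,y} \subseteq c\varphi$---equivalently, whether there exist $c \in F^*$ and a $2$-independent pair $\{x,y\}$ with $\{1,x,y,xy\} \subseteq cD_F(\varphi)$. If yes, Lemma~\ref{Lem:p-subform} gives $c\varphi \simeq \pf{x,y} \ort \sqf{c',d'}$, yielding the first alternative. If no, I perform a further change of $2$-basis of $N_F(\varphi)$ to bring $e$ into the canonical form $t + ad$ with $t \in F^2(a,b,c)$: grouping $e$ by $d$-dependence as $e = dB + C$ with $B, C \in F^2(a,b,c)$, the nonexistence of any Pfister subform is used to force $B$ to be an $F^2$-linear combination of $\{1,a,b,c\}$; after a linear change of basis among $a,b,c$ one may assume $B = a$, producing $e = ad + t$ with $t \in F^2(a,b,c)$.

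The principal obstacle is the rigorous execution of the second subcase of the preceding case analysis: the claim that the absence of Pfister subforms (after every possible scaling) forces $B$ to be linear in $\{1,a,b,c\}$ requires exhibiting, for each degree-$\ge 2$ monomial that could appear in $B$, an explicit pair of elements of some scaled $D_F(\varphi)$ whose product lands back inside. This argument hinges on the uniqueness of the $F^2$-expansion of elements of $N_F(\varphi)$ with respect to the $2$-basis $\{a,b,c,d\}$ recalled in Subsection~\ref{Subsec:p-bases}, together with the rigid combinatorics of the fifteen nonzero monomials in $F^2(a,b,c,d)/F^2$. Once this is settled, the final change of basis among $\{a,b,c\}$ to normalize $B = a$ is elementary.
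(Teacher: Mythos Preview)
Your treatment of the minimal cases and the quasi-Pfister-neighbor cases is correct and essentially matches the paper's.

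For the middle subcase of \ref{Prop:LowDim6}, the paper takes a different and shorter route: it fixes a $5$-dimensional subform $\tau\subseteq\varphi$, applies part~\ref{Prop:LowDim5} to $\tau$, and then decomposes the remaining entry as $x=y+dz$ with $y,z\in F^2(a,b,c)$, splitting on whether $z\in F^2$. When $z\in F^2$ one reduces to the $5$-form $\sqf{1,a,b,c,y}$, which has norm degree $8$, and part~\ref{Prop:LowDim5} delivers the first alternative; when $z\notin F^2$ the paper replaces $a$ by $z$ in the $2$-basis of $F^2(a,b,c)$. Your Pfister-subform dichotomy never appears in the paper and is considerably more laborious.

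There is a genuine gap in your plan. The contrapositive of your step~3 says: if $B$ carries a degree-$\ge 2$ monomial in $\{a,b,c\}$, then some scalar multiple of $\varphi$ contains a $2$-fold quasi-Pfister subform. Your proposed verification---exhibiting, for each such monomial, a single product relation inside some $cD_F(\varphi)$---is not evidently sufficient, because the relevant elements of $D_F(c\varphi)$ are arbitrary $F^2$-linear combinations of six generators, not just monomials, and the contribution of $C$ interacts with that of $B$. Concretely, take $e=abcd$ (so $B=abc$, $C=0$): for $\varphi=\sqf{1,a,b,c,d,abcd}$ a direct search over monomial scalars and monomial entries of $D_F(c\varphi)$ fails to locate any $2$-fold quasi-Pfister subform, so at minimum your casework must range over non-monomial elements as well, and it is not clear this terminates cleanly. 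You also omit the boundary case $B\in F^2$: there your ``change of basis to $B=a$'' is vacuous, and you must instead observe that $e\in F^2(a,b,c)$ forces $\ndeg_F\sqf{1,a,b,c,e}=8$, whence part~\ref{Prop:LowDim5} places $\varphi$ in the first alternative---exactly the inductive step the paper uses throughout.
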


\begin{proof}
Cases \ref{Prop:LowDim1}--\ref{Prop:LowDim4} are obvious. 

\ref{Prop:LowDim5} Let $\dim\varphi=5$. If $\ndeg_F\varphi=8$, then $\varphi$ is a quasi-Pfister neighbor, which is special by Corollary~\ref{Cor:PNsmalldimSPN_TSQF}; the claim then follows from Proposition~\ref{Prop:SPNsmalldim_TSQF}. If $\ndeg_F\varphi=16=2^{\dim\varphi-1}$, then $\varphi$ is minimal, and the claim follows from Lemma~\ref{Lem:PropertiesOfMinimalForms_pforms}.

\ref{Prop:LowDim6} Assume that $\dim\varphi=6$. If $\ndeg_F\varphi=8$, then $\varphi$ is a quasi-Pfister neighbor, special by Corollary~\ref{Cor:PNsmalldimSPN_TSQF}, and we obtain the required form from Proposition~\ref{Prop:SPNsmalldim_TSQF}. 

If $\dim\varphi=6$ and $\ndeg_F\varphi=16$, then consider $\tau\subseteq\varphi$ with $\dim\tau=5$, and let $x\in F$ be such that $\varphi\simeq\tau\ort\sqf{x}$. By \ref{Prop:LowDim5}, we can assume that either $\tau\simeq\pf{a,b}\ort\sqf{c}$, or $\tau\simeq\sqf{1,a,b,c,d}$. In the former case, we must have $x\in F^2(a,b,c,d)\setminus F^2(a,b,c)$, as otherwise $\ndeg_F\varphi=8$. Therefore, $N_F(\varphi)=F^2(a,b,c,d)=F^2(a,b,c,x)$, so we can exchange the $d$ in the $2$-basis of $N_F(\varphi)$ for $x$. Up to renaming, we get that $\varphi\simeq\pf{a,b}\ort\sqf{c,d}$. 

Now, assume $\tau\simeq\sqf{1,a,b,c,d}$, i.e., $\varphi\simeq\sqf{1,a,b,c,d,x}$ for some $x\in F^2(a,b,c,d)$. Write $x=y+dz$ with $y,z\in F^2(a,b,c)$. If $z\in F^2$, then $\varphi\simeq\sqf{1,a,b,c,d,y}$. Denote $\sigma\simeq\sqf{1,a,b,c,y}$; then $\dim\sigma=5$ and $\ndeg_F\sigma=8$, so (up to renaming) $\sigma\simeq\pf{a,b}\ort\sqf{c}$ by \ref{Prop:LowDim5}, and hence $\varphi\simeq\pf{a,b}\ort\sqf{c,d}$. Now, assume that $z\notin F^2$.  Then we can write $z=z_1+az_2$ with $z_1,z_2\in F^2(b,c)$; we can assume that $z_2\neq0$ (otherwise exchange $a$ with $b$ or $c$). Hence, $F^2(a,b,c)=F^2(z,b,c)$; without loss of generality, we can assume $a=z$. We get $\varphi\simeq\sqf{1,a,b,c,d,y+ad}$ with $y\in F^2(a,b,c)$ as claimed.

In the remaining case, $\dim\varphi=6$ and $\ndeg_F\varphi=32$, we have $\ndeg_F\varphi=2^{\dim\varphi-1}$. Hence, $\varphi$ is minimal, and we conclude by applying Lemma~\ref{Lem:PropertiesOfMinimalForms_pforms}.
\end{proof}

\begin{corollary}
The answer to Question~\ref{MQ} is positive for all totally singular quadratic forms of dimension $\leq5$.
\end{corollary}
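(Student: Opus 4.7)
The plan is to split into cases by $(\dim\varphi,\ndeg_F\varphi)$ using Proposition~\ref{Prop:LowDim}, and in each case invoke an earlier theorem. Since Lemma~\ref{Lemma:VishikAnisotropy} gives $\varphi_{\an}\simv\psi_{\an}$ and $\varphi\simsim\psi$ is clearly equivalent to $\varphi_{\an}\simsim\psi_{\an}$, I first reduce to the case where both forms are anisotropic; Proposition~\ref{Prop:WeakVishikSameNormField} then forces $\ndeg_F\varphi=\ndeg_F\psi$, so $\varphi$ and $\psi$ fall into the same normal-form case of Proposition~\ref{Prop:LowDim}.

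In dimensions $1$ and $2$ the form $\varphi$ is similar to an anisotropic quasi-Pfister form, so Proposition~\ref{Prop:Vishik_PF} gives similarity. Whenever $\ndeg_F\varphi=2^{\dim\varphi-1}$---which covers dim $3$ with $\ndeg=4$, dim $4$ with $\ndeg=8$, and dim $5$ with $\ndeg=16$---the form $\varphi$ is minimal, so Theorem~\ref{Th:TSQF_Minimal} applies. Dim $4$ with $\ndeg=4$ again lands in the quasi-Pfister case. The only remaining case is $\dim\varphi=5$ with $\ndeg_F\varphi=8$, where Proposition~\ref{Prop:LowDim} gives $\varphi\simsim\pf{a,b}\ort c\sqf{1}$ and $\psi\simsim\pf{a',b'}\ort c'\sqf{1}$, both anisotropic special quasi-Pfister neighbors whose $\sigma$-part is the $0$-fold quasi-Pfister form $\sqf{1}$.

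For this last case I would apply Theorem~\ref{Th:TSQF_SPNsummary}: since $\sigma=\sqf{1}$ is a quasi-Pfister form, the theorem reduces the problem to producing $d\in F^*$ with $d\pf{a,b}\subseteq\psi$. As $\psi$ already contains a scalar multiple of $\pf{a',b'}$, it is enough to show $\pf{a,b}\simeq\pf{a',b'}$, i.e.\ $F^2(a,b)=F^2(a',b')$. The key step is to recover the subfield $F^2(a,b)$ intrinsically from the Vishik class of $\varphi$. Using Lemma~\ref{Lemma:IsotropyIndicesSPN_pforms} together with the description of $(\pf{a,b}_{F(\sqrt{x})})_{\an}$ via its norm field $F(\sqrt{x})^2(a,b)$ (Lemma~\ref{Lemma:PFandPN}), a direct case analysis according to whether $x$ lies in $F^2$, in $F^2(a,b)\setminus F^2$, in $F^2(a,b,c)\setminus F^2(a,b)$, or outside $F^2(a,b,c)$ should yield
\[
\iql{\varphi_{F(\sqrt{x})}} = \begin{cases} 0 & \text{if } x\in F^2 \text{ or } x\notin F^2(a,b,c), \\ 1 & \text{if } x\in F^2(a,b,c)\setminus F^2(a,b), \\ 2 & \text{if } x\in F^2(a,b)\setminus F^2. \end{cases}
\]
Consequently $F^2(a,b)=F^2\cup\{x\in F^*:\iql{\varphi_{F(\sqrt{x})}}=2\}$ is determined by the Vishik class, and the same analysis applied to $\psi$ forces $F^2(a,b)=F^2(a',b')$, yielding $\pf{a,b}\simeq\pf{a',b'}$ as required.

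The main obstacle is exactly this defect computation: it requires carefully tracking how the norm field of $\pf{a,b}$ degenerates under each purely inseparable quadratic extension $F(\sqrt{x})$ and verifying stratum-by-stratum whether the slot $c\sqf{1}$ is absorbed into the anisotropic part. Once this is done, the structural reduction via Theorem~\ref{Th:TSQF_SPNsummary} closes the argument, and all other dimension/norm-degree combinations are handled directly by Proposition~\ref{Prop:Vishik_PF} or Theorem~\ref{Th:TSQF_Minimal}.
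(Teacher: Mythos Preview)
Your proposal is correct and follows essentially the paper's approach: split by $(\dim\varphi,\ndeg_F\varphi)$ via Proposition~\ref{Prop:LowDim} and invoke Proposition~\ref{Prop:Vishik_PF}, Theorem~\ref{Th:TSQF_Minimal}, or Theorem~\ref{Th:TSQF_SPNsummary} in the respective cases. You are in fact more careful than the paper in the $\dim=5$, $\ndeg=8$ case: Theorem~\ref{Th:TSQF_SPNsummary} has the hypothesis $c\pi\subseteq\psi$, which the paper's one-line proof does not explicitly verify, whereas your defect computation (which is correct---the four strata are handled by Lemma~\ref{Lemma:IsotropyIndicesSPN_pforms} together with $D_{F(\sqrt{x})}(\pf{a,b})=F^2(x,a,b)$) recovers $F^2(a,b)$ from the weak Vishik class and thereby yields $\pf{a,b}\simeq\pf{a',b'}$, supplying exactly this hypothesis.
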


\begin{proof}
Let $\varphi$ be a totally singular form of dimension $\leq5$. By Proposition~\ref{Prop:LowDim}, $\varphi$ is minimal, or a quasi-Pfister form, or a special quasi-Pfister neighbor given by a triple $(\pi,b,\sigma)$ with $\sigma$ minimal. Hence, the claim follows by Theorem~\ref{Th:TSQF_Minimal} (minimal forms),  Proposition~\ref{Prop:Vishik_PF} (quasi-Pfister forms), and Theorem~\ref{Th:TSQF_SPNsummary} (special quasi-Pfister neighbors).
\end{proof}

\begin{remark}
In dimension six, the only case that remains open is if the norm degree of the form is $16$. This case will be covered in a forthcoming article.
\end{remark}

%---------------------------------------------------
%\newpage
\printbibliography

\end{document}